\newtheorem{thm}{Theorem}
\numberwithin{thm}{section}
\newtheorem{lem}[thm]{Lemma}
\newtheorem{prop}[thm]{Proposition}
\newtheorem{cor}[thm]{Corollary}
\newtheorem{exam}[thm]{Example}
\newtheorem{rema}[thm]{Remark}
\newtheorem{defi}[thm]{Definition}
\newtheorem*{thm2}{Theorem}
\newtheorem*{cor2}{Corollary}
\begin{document}
\begin{center}
\huge{Ulrich bundles on Brauer--Severi varieties}\\[1cm]
\end{center}

\begin{center}

\large{Sa$\mathrm{\check{s}}$a Novakovi$\mathrm{\acute{c}}$}\\[0,4cm]
{\small December 2019}\\[0,3cm]
\end{center}

\noindent{\small \textbf{Abstract}. 
We prove the existence of Ulrich bundles on any Brauer--Severi variety. In some cases, the minimal possible rank of the obtained Ulrich bundles equals the period of the Brauer--Severi variety. Moreover, we find a formula for the rank of an Ulrich bundle involving the period of the considered Brauer--Severi variety $X$, at least if $\mathrm{dim}(X)=p-1$ for an odd prime $p$. This formula implies that the rank of any Ulrich bundle on such a Brauer--Severi variety $X$ must be a multiple of the period. \\

\section{Introduction}
Let $H$ a very ample line bundle on a variety $X$. In \cite{ESWV} the authors defined an \emph{Ulrich bundle} for $(X,H)$ to be a vector bundle $\mathcal{E}$ satisfying $h^q(X,\mathcal{E}(-iH))=0$ for each $q\in\mathbb{Z}$ and $1\leq i\leq \mathrm{dim}(X)$. Ulrich bundles are somehow the "nicest" arithmetically Cohen--Macaulay sheaves which are important to understand, since they give a measurement of the complexity of the variety. It was conjectured in \cite{ESWV} that any projective variety carries an Ulrich bundle. Moreover, it is asked for the smallest possible rank of such a bundle. This conjecture is a wide open problem, and we know a few result at the present. Varieties known to carry Ulrich sheaves include curves and Veronese varieties \cite{ESWV}, \cite{HV}, complete intersections \cite{BHU1V}, generic linear determinantal varieties \cite{BHUV}, Segre varieties \cite{CMRPV}, rational normal scrolls \cite{MRV}, Grassmannians \cite{CMRV}, some flag varieties \cite{CMRV}, \cite{CHWV}, generic $\mathrm{K}$3 surfaces \cite{AFOV}, abelian surfaces \cite{BEV}, Enriques surfaces \cite{BNV} and ruled surfaces \cite{ACMRV}. Among others, in \cite{ESWV} it is proved that any curve $C\subset\mathbb{P}^n$ has a rank-$2$ Ulrich sheaf, provided the base field of the curve is infinite. Examples may be pointless conics defined by $x^2+y^2+z^2=0$. Recall that a scheme $X$ of finite type over a field $k$ is called \emph{Brauer--Severi variety} if $X\otimes_k\bar{k}\simeq \mathbb{P}^n$. Via Galois cohomology, isomorphism classes of Brauer--Severi varieties over $k$ are in one-to-one correspondence with isomorphism classes of central simple algebras over $k$. Moreover, Brauer--Severi varieties (respectively the corresponding central simple algebras) have important invariants called period, index and degree (see Section 2 for all the details). Pointless conics such as those defined by $x^2+y^2+z^2=0$ over a subfield of $\mathbb{R}$ are Brauer--Severi varieties, and since these admit rank-$2$ Ulrich bundles by \cite{ESWV}, it is natural to ask about Ulrich bundles over Brauer--Severi varieties of higher dimension.
In the case of non-split Brauer--Severi curves there are always Ulrich bundles of rank two (see Proposition 5.4). It is an easy observation that non-split Brauer--Severi varieties cannot have Ulrich line bundles (see Proposition 5.3). So the minimal rank of an Ulrich bundle on a Brauer--Severi curve is one or two, depending whether the curve is split or not. 

In general, we show that there exist always Ulrich bundles on any polarized Brauer--Severi variety (see Propositions 5.1 and 5.2). So the existence of Ulrich bundles for Brauer--Severi varieties has been solved completely. In particular, we show that the rank of the (obtained) Ulrich bundle is divisible by the period (see Proposition 5.1). In view of the fact that not much is known about the minimal rank of Ulrich bundles for $(\mathbb{P}^n,\mathcal{O}_{\mathbb{P}^n}(d))$, it seems to be a challenging problem to determine the minimal rank of Ulrich bundles on Brauer--Severi varieties. 
Recall that on a $n$-dimensional Brauer--Severi variety $X$, corresponding to a central simple algebra $A$, there are indecomposable vector bundles $\mathcal{V}_i$, $i\in\mathbb{Z}$, satisfying $\mathcal{V}_i\otimes_k \bar{k}\simeq \mathcal{O}_{\mathbb{P}^n}(i)^{\oplus d_i}$, where $d_i=\mathrm{ind}(A^{\otimes i})$ denotes the index of the central simple algebra $A^{\otimes i}$. These $\mathcal{V}_i$, $i\in\mathbb{Z}$, are unique up to isomorphism (see \cite{NV}) and one has $\mathrm{End}(\mathcal{V}_i)\simeq D_i$, where $D_i$ denotes the central division algebra that is Brauer-equivalent to $A^{\otimes i}$. It is well known that the set $\mathcal{V}_0,...,\mathcal{V}_n$ is a full weak exceptional collection on a Brauer--Severi variety of dimension $n$ (see \cite{BERV} or \cite{ORV}, Example 1.17). Calculating the right dual of this collection (see Proposition 4.8) and using a generalization of Beilinson's spectral sequence for Brauer--Severi varieties (see Corollary 4.10), we can prove the following:
\begin{thm2}[Theorem 5.10]
	Let $n\neq 1$ and $X$ a $n$-dimensional Brauer--Severi variety of period $p$. Set $d_n=\mathrm{rk}(\mathcal{V}_n)$. If $\mathcal{E}$ is an Ulrich bundle for $(X,\mathcal{O}_X(pd))$, then there is an exact sequence
	\begin{eqnarray}
		0\rightarrow \mathcal{V}^{\oplus a_{-n}}_{-n}\rightarrow \cdots \rightarrow \mathcal{V}^{\oplus a_{-n+1}}_{-n+1}\rightarrow \mathcal{V}^{\oplus a_{-1}}_{-1}\rightarrow \mathcal{E}\otimes\mathcal{O}_X(-pd)^{\oplus d^2_n}\rightarrow 0.
	\end{eqnarray}
\end{thm2}  
With the help of Theorem 5.10, one can try to determine the rank of an Ulrich bundle. In the special case where the Brauer--Severi variety has dimension $p-1$ for an odd prime $p$, we prove:
\begin{thm2}[Theorem 5.12]
	Let $X$ be a non-split Brauer--Severi variety of dimension $p-1$, where $p$ denotes an odd prime, and $\mathcal{E}$ an Ulrich bundle for $(X,\mathcal{O}_X(p))$. Then there are integers $c_{p-1-j}>0$, $j=1,...,p-2$, such that
	\begin{eqnarray*}
		\mathrm{rk}(\mathcal{E})\cdot ((p-1)!+\prod^{p-2}_{i=1}{(ip-1)})=(p\cdot(p-1)!)(\sum^{p-2}_{j=1}{(-1)^{j+1}c_{p-1-j}}).
	\end{eqnarray*}
	In particular, $p$ divides $\mathrm{rk}(\mathcal{E})\cdot ((p-1)!+\prod^{p-2}_{i=1}{(ip-1)})$.
\end{thm2}
So Theorem 5.12 implies that if $p$ does not divide $((p-1)!+\prod^{p-2}_{i=1}{(ip-1)})$, the rank $\mathrm{rk}(\mathcal{E})$ of an Ulrich bundle must be a multiple of $p$. And indeed, applying Wilson's theorem, we find:
\begin{cor2}[Corollary 5.13]
	Let $X$ be a non-split Brauer--Severi variety as in Theorem 5.12. Then the rank of any Ulrich bundle for $(X,\mathcal{O}_X(p))$ must be a multiple of $p$.  
\end{cor2}
Using a generalized Hartshorne--Serre correspondence as stated in \cite{BAV}, we can show that in certain cases a Brauer--Severi variety associated to a central simple division algebra of index $4$ and period $2$ admits a unique rank two Ulrich bundle. More precise, we show:
\begin{thm2}[Theorem 5.15]
	Let $X$ be a Brauer--Severi variety of dimension $3$ where $\mathcal{O}_X(2)$ exists. Suppose there is a smooth geometrically connected genus one curve $C$ on $X$ such that the restriction map $H^0(X,\mathcal{O}_X(2))\rightarrow H^0(C,\mathcal{O}_C(2))$ is bijective. Then there is an  unique Ulrich bundle of rank two for $(X,\mathcal{O}_X(2))$.
\end{thm2}
In Example 5.17 we show that there exist smooth genus one curves on a Brauer--Severi threefold over $\mathbb{R}$ satisfying the hypothesis of Theorem 5.15. A consequence of the theorem is the following result.
\begin{cor2}[Corollary 5.16]
	Let $X$ be as in Theorem 5.15. Then there is a Ulrich bundle of rank $12$ for $(X,\mathcal{O}_X(2d))$ for any $d\geq 1$.
\end{cor2}
The significance of Corollary 5.15 is explained in Remark 5.17. We recall that in \cite{BV} it is proved that any Fano threefold of index two carries a special rank two Ulrich bundle. In particular, there is always a special rank two Ulrich bundle for $(\mathbb{P}^3,\mathcal{O}_{\mathbb{P}^3}(2))$. Since there is no rank one Ulrich bundle for $(\mathbb{P}^3,\mathcal{O}_{\mathbb{P}^3}(2))$, the minimal rank of an Ulrich bundle for $(\mathbb{P}^3,\mathcal{O}_{\mathbb{P}^3}(2))$ is two. The period of $\mathbb{P}^3$, considered as a trivial Brauer--Severi variety, is one. Notice that in \cite{LV} it is shown that there is a unique rank two Ulrich bundle for $(\mathbb{P}^2,\mathcal{O}_{\mathbb{P}^2}(2))$. These results together with the results obtained in this paper lead us to formulate some questions concerning Ulrich bundles on Brauer--Severi varieties. So let $X$ be an arbitrary Brauer--Severi variety of period $\mathrm{per}(X)=p$.

\begin{itemize}
	\item[1)] Is there always an Ulrich bundle of rank $\mathrm{per}(X)$ for $(X,\mathcal{O}_X(p))$ ?
	\item[2)] Is the minimal rank of an Ulrich bundle for $(X,\mathcal{O}_X(p))$ exactly $\mathrm{per}(X)=p$ ?
	\item[3)] How does the minimal rank of an Ulrich bundle for $(X,\mathcal{O}_X(p\cdot d))$ depend on $d$ ?
	\item[4)]	Suppose the minimal rank of an Ulrich bundle does not equal $\mathrm{per}(X)$. Is there a formula involving the invariants period, index and degree that calculates the minimal rank ?  
\end{itemize}
{\small \textbf{Acknowledgement}. I thank Lucian B\u adescu for answering questions about the generalized Hartshorne--Serre correspondence.  This research was conducted in the framework of the research training group GRK 2240: Algebro-geometric Methods in Algebra, Arithmetic and Topology, which is funded by the $\mathrm{DFG}$.}\\

\noindent{\small \textbf{Convetions}. Throughout this work $k$ is an arbitrary field. Moreover, $\bar{k}$ denotes an algebraic closure and $\bar{\mathcal{E}}$ the base change of a vector bundle $\mathcal{E}$ over $k$ to $\bar{k}$. The dimension of the cohomology group $H^i(X,\mathcal{F})$ as $k$ vector space is abbreviated by $h^i(\mathcal{F})$. Similary, we write $\mathrm{ext}^i(\mathcal{F},\mathcal{G})$ for $\mathrm{dim}\mathrm{Ext}^i(\mathcal{F},\mathcal{G})$. For $X\times Y$, let $f$ and $g$ be the projections to $X$ and $Y$ respectively. The tensor product $f^*\mathcal{F}\otimes g^*\mathcal{G}$ will be denoted by $\mathcal{F}\boxtimes\mathcal{G}$.}


\section{Brauer--Severi varieties}
We recall the basics of Brauer--Severi varieties and central simple algebras and refer to \cite{GSV} and references therein for details. A \emph{Brauer--Severi variety} of dimension $n$ is a scheme $X$ of finite type over $k$ such that $X\otimes_k L\simeq \mathbb{P}^n$ for a finite field extension $k\subset L$. This definition of a Brauer--Severi variety is equivalent to the definition given in the introduction (see \cite{GSV}, Remark 5.12). A field extension $k\subset L$ such that $X\otimes_k L\simeq \mathbb{P}^n$ is called \emph{splitting field} of $X$. Clearly, the algebraic closure $\bar{k}$ is a splitting field for any Brauer--Severi variety. One can show that a Brauer--Severi variety always splits over a finite separable field extension of $k$ (see \cite{GSV}, Corollary 5.1.4). By embedding the finite separable splitting field into its Galois closure, a Brauer--Severi variety splits over a finite Galois extension of the base field $k$ (see \cite{GSV}, Corollary 5.1.5). It follows from descent theory that $X$ is projective, integral and smooth over $k$. If the Brauer--Severi variety $X$ is already isomorphic to $\mathbb{P}^n$ over $k$, it is called \emph{split}, otherwise it is called \emph{non-split}. There is a well-known one-to-one correspondence between Brauer--Severi varieties and central simple $k$-algebras. Recall that an associative $k$-algebra $A$ is called \emph{central simple} if it is an associative finite-dimensional $k$-algebra that has no two-sided ideals other than $0$ and $A$ and if its center equals $k$. If the algebra $A$ is a division algebra, it is called \emph{central division algebra}. For instance, a Brauer--Severi curve is associated to a quaternion algebra (see \cite{GSV}). Central simple $k$-algebras can be characterized by the following well-known fact (see \cite{GSV}, Theorem 2.2.1): $A$ is a central simple $k$-algebra if and only if there is a finite field extension $k\subset L$ such that $A\otimes_k L \simeq M_n(L)$ if and only if $A\otimes_k \bar{k}\simeq M_n(\bar{k})$.

The \emph{degree} of a central simple algebra $A$ is now defined to be $\mathrm{deg}(A):=\sqrt{\mathrm{dim}_k A}$. According to the Wedderburn Theorem, for any central simple $k$-algebra $A$ there is an integer $n>0$ and a division algebra $D$ such that $A\simeq M_n(D)$. The division algebra $D$ is also central and unique up to isomorphism. Now the degree of the unique central division algebra $D$ is called the \emph{index} of $A$ and is denoted by $\mathrm{ind}(A)$. It can be shown that the index is the smallest among the degrees of finite separable splitting fields of $A$ (see \cite{GSV}, Corollary 4.5.9). Two central simple $k$-algebras $A\simeq M_n(D)$ and $B\simeq M_m(D')$ are called \emph{Brauer equivalent} if $D\simeq D'$. Brauer equivalence is indeed an equivalence relation and one defines the \emph{Brauer group} $\mathrm{Br}(k)$ of a field $k$ as the group whose elements are equivalence classes of central simple $k$-algebras and group operation being the tensor product. It is an abelian group with inverse of the equivalence class of $A$ given by the equivalence class of $A^{op}$. The neutral element is the equivalence class of $k$. The order of a central simple $k$-algebra $A$ in $\mathrm{Br}(k)$ is called the \emph{period} of $A$ and is denoted by $\mathrm{per}(A)$. It can be shown that the period divides the index and that both, period and index, have the same prime factors (see \cite{GSV}, Proposition 4.5.13). Denoting by $\mathrm{BS}_n(k)$ the set of all isomorphism classes of Brauer--Severi varieties of dimension $n$ and by $\mathrm{CSA}_{n+1}(k)$ the set of all isomorphism classes of central simple $k$-algebras of degree $n+1$, there is a canonical identification $\mathrm{CSA}_{n+1}(k)=\mathrm{BS}_n(k)$ via non-commutative Galois cohomology (see \cite{GSV} for details). Hence any $n$-dimensional Brauer--Severi variety $X$ corresponds to a central simple $k$-algebra of degree $n+1$. In view of the one-to-one correspondence between Brauer--Severi varieties and central simple algebras one can also speak about the period of a Brauer--severi variety $X$. It is defined to be the period of the corresponding central simple $k$-algebra $A$.

Geometrically, the period of a Brauer--Severi variety $X$ can be interpreted as the smallest positive integer $p$ such that $\mathcal{O}_X(p)$ exists on $X$. In other words, if $X\otimes_k L\simeq \mathbb{P}^n$, then $p$ is the smallest positive integer such that $\mathcal{O}_{\mathbb{P}^n}(p)$ descends to a line bundle on $X$. 
Moreover, the Picard group of $X$ is isomorphic to $\mathbb{Z}$ and is generated by $\mathcal{O}_X(p)$. In the present paper, we make use of the following fact concerning embeddings of a Brauer--Severi variety.
\begin{thm}[\cite{LIV}, Corollary 3.6]
	Let $X$ be a Brauer--Severi variety of period $p$ over $k$. Then any line bundle $\mathcal{O}_X(pd)$ for $d\geq 1$ gives rise to an embedding
	\begin{eqnarray*}
		\phi_{pd}\colon X\longrightarrow \mathbb{P}^{N}, \;\; \text{where}\;\; N=\binom{\mathrm{dim}(X)+pd}{pd}.
	\end{eqnarray*} 
	After base change to a splitting field $L$ of $X$, this embedding becomes the $dp$-uple Veronese embedding of $X\otimes_k L=\mathbb{P}_L^{\mathrm{dim}(X)}$ into $\mathbb{P}_L^{N}$.
\end{thm}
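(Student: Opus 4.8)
The plan is to realize $\phi_{pd}$ as the morphism associated to the complete linear system $|\mathcal{O}_X(pd)|$ and to check that it is a closed immersion by faithfully flat descent from a splitting field. Fix a finite Galois splitting field $k\subset L$, so that $X_L:=X\otimes_k L\simeq\mathbb{P}^n_L$ with $n=\dim(X)$ and $\mathcal{O}_X(pd)\otimes_k L\simeq\mathcal{O}_{\mathbb{P}^n_L}(pd)$; recall that $\mathcal{O}_X(pd)$ exists precisely because $\mathrm{Pic}(X)\simeq\mathbb{Z}$ is generated by $\mathcal{O}_X(p)$ and $p\mid pd$.

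First I would record the behaviour of global sections under the flat base change $L/k$. Cohomology commutes with this flat base change, so $\dim_k H^0(X,\mathcal{O}_X(pd))=\dim_L H^0(\mathbb{P}^n_L,\mathcal{O}(pd))$, a number given by a standard monomial count, and the evaluation morphism $H^0(X,\mathcal{O}_X(pd))\otimes_k\mathcal{O}_X\to\mathcal{O}_X(pd)$ base changes to the evaluation morphism over $L$. Since the latter is surjective (the bundle $\mathcal{O}_{\mathbb{P}^n_L}(pd)$ is globally generated) and $-\otimes_k L$ is faithfully flat, the former is surjective too. Hence $\mathcal{O}_X(pd)$ is globally generated and the complete linear system defines a $k$-morphism $\phi_{pd}\colon X\to\mathbb{P}\bigl(H^0(X,\mathcal{O}_X(pd))^{\vee}\bigr)\simeq\mathbb{P}^N$ into the projective space of the statement.

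Next I would use that the formation of $\phi_{pd}$ is compatible with the base change $L/k$: the target $\mathbb{P}\bigl(H^0(X,\mathcal{O}_X(pd))^{\vee}\bigr)$ base changes to $\mathbb{P}\bigl(H^0(X_L,\mathcal{O}_{X_L}(pd))^{\vee}\bigr)$, and $(\phi_{pd})_L$ is the morphism attached to the complete linear system $|\mathcal{O}_{X_L}(pd)|=|\mathcal{O}_{\mathbb{P}^n_L}(pd)|$ on $\mathbb{P}^n_L$, that is, the $pd$-uple Veronese embedding. This proves the final sentence of the statement and, in particular, shows that $(\phi_{pd})_L$ is a closed immersion.

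Finally, a morphism $f$ of finite-type $k$-schemes is a closed immersion if and only if $f_L$ is, since closed immersions are stable under and descend along the faithfully flat quasi-compact morphism $\mathrm{Spec}\,L\to\mathrm{Spec}\,k$; applying this to $(\phi_{pd})_L$ shows that $\phi_{pd}$ is a closed immersion of $X$ into $\mathbb{P}^N$. The only delicate point is the descent bookkeeping in the two middle steps: one has to make sure that the globally-generated locus, the induced morphism, and its source and target are all formed compatibly with base change along $L/k$, so that both the closed-immersion property and the identification with the Veronese embedding can be transported back down to $k$. The geometric ingredients — global generation of $\mathcal{O}_{\mathbb{P}^n}(pd)$ and the fact that its complete linear system realizes the Veronese embedding — are classical, so once the descent formalism is in place no real obstacle remains.
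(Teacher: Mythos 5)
Your argument is sound, but note that the paper itself contains no proof of this statement: it is imported verbatim as \cite{LIV}, Corollary 3.6, so there is no internal argument to compare against. Your route --- global generation of $\mathcal{O}_X(pd)$ via faithfully flat descent of the evaluation map, compatibility of the complete-linear-system morphism with the base change $L/k$, identification of $(\phi_{pd})_L$ with the $pd$-uple Veronese embedding of $\mathbb{P}^n_L$, and then descent of the closed-immersion property along the fpqc morphism $\mathrm{Spec}\,L\to\mathrm{Spec}\,k$ --- is the standard one, and every descent ingredient you invoke (flat base change for cohomology, closed immersions being fpqc-local on the base) is classical; I see no gap. It is in fact more direct than the cited source, which works in the general setting of a globally generated line bundle $\mathcal{L}$ on a proper $k$-scheme and produces a morphism to a Brauer--Severi variety whose Brauer class is the obstruction $\delta(\mathcal{L})$; the case at hand is precisely the one where that class is trivial because $pd$ is a multiple of the period, so the target is an honest projective space and your direct descent avoids the Brauer-class bookkeeping altogether. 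One cosmetic remark: the complete linear system embeds $X$ into $\mathbb{P}\bigl(H^0(X,\mathcal{O}_X(pd))^{\vee}\bigr)$, whose dimension is $\binom{\dim(X)+pd}{pd}-1$, so the value $N=\binom{\dim(X)+pd}{pd}$ in the quoted statement is off by one (as one already sees for the Veronese embedding of $\mathbb{P}^n$); your silent identification with ``the $\mathbb{P}^N$ of the statement'' should acknowledge this shift, but it does not affect the substance of the proof.
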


\section{Generalities on Ulrich bundles}
Let $H$ a very ample line bundle on a variety $X$. Recall that in \cite{ESWV} a vector bundle $\mathcal{E}$ on $X$ is defined to be an \emph{Ulrich bundle} for $(X,H)$ if it satisfies $h^q(X,\mathcal{E}(-iH))=0$ for each $q\in\mathbb{Z}$ and $1\leq i\leq \mathrm{dim}(X)$. Although there are further properties of Ulrich bundles, we will list only those needed in the present paper. We refer the reader to \cite{BV}, \cite{ESWV} for details.

\begin{lem}[\cite{BV}, (3.5)] 
	Let $\mathcal{E}$ and $\mathcal{F}$ be Ulrich bundles for $(X,\mathcal{O}_X(1))$ and $(Y,\mathcal{O}_Y(1))$ and put $n=\mathrm{dim}(X)$. Then $\mathcal{E}\boxtimes \mathcal{F}(n)$ is an Ulrich bundle for $(X\times Y,\mathcal{O}_X(1)\boxtimes \mathcal{O}_Y(1))$.
\end{lem}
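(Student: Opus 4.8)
The plan is to check the defining cohomological vanishing of an Ulrich bundle directly, reducing cohomology on the product to cohomology on the two factors via the Künneth formula. First I would fix notation: put $m=\mathrm{dim}(Y)$, so that $\mathrm{dim}(X\times Y)=n+m$, abbreviate $\mathcal{O}_{X\times Y}(1):=\mathcal{O}_X(1)\boxtimes\mathcal{O}_Y(1)$, and note this line bundle is very ample, being the restriction of $\mathcal{O}(1)$ under the composite of the embeddings given by $\mathcal{O}_X(1)$, $\mathcal{O}_Y(1)$ and the Segre embedding. Here $\mathcal{E}\boxtimes\mathcal{F}(n)$ is to be read as $f^*\mathcal{E}\otimes g^*(\mathcal{F}\otimes\mathcal{O}_Y(n))$, which is a vector bundle on $X\times Y$ since it is a tensor product of pullbacks of vector bundles. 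Twisting by $\mathcal{O}_{X\times Y}(-i)=f^*\mathcal{O}_X(-i)\otimes g^*\mathcal{O}_Y(-i)$ gives the identification
\[
\bigl(\mathcal{E}\boxtimes\mathcal{F}(n)\bigr)(-i)\;\simeq\;\mathcal{E}(-i)\boxtimes\mathcal{F}(n-i).
\]

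Next I would invoke the Künneth formula, valid for the proper $k$-schemes $X$ and $Y$ (over a field every sheaf in sight is automatically $k$-flat), to obtain for every $q\in\mathbb{Z}$
\[
H^q\bigl(X\times Y,\,(\mathcal{E}\boxtimes\mathcal{F}(n))(-i)\bigr)\;\simeq\;\bigoplus_{a+b=q}H^a\bigl(X,\mathcal{E}(-i)\bigr)\otimes_k H^b\bigl(Y,\mathcal{F}(n-i)\bigr),
\]
and then verify that for each $i$ with $1\leq i\leq n+m$ at least one of the two tensor factors vanishes in all degrees. The range splits into two pieces. For $1\leq i\leq n$, the Ulrich property of $\mathcal{E}$ on $X$ (with $n=\mathrm{dim}(X)$) gives $H^\bullet(X,\mathcal{E}(-i))=0$. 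For $n+1\leq i\leq n+m$, writing $j=i-n\in\{1,\dots,m\}$ so that $\mathcal{F}(n-i)=\mathcal{F}(-j)$, the Ulrich property of $\mathcal{F}$ on $Y$ (with $m=\mathrm{dim}(Y)$) gives $H^\bullet(Y,\mathcal{F}(-j))=0$. In either case the Künneth sum vanishes, so $H^q(X\times Y,(\mathcal{E}\boxtimes\mathcal{F}(n))(-i))=0$ for all $q$ and all $1\leq i\leq n+m$, which is precisely the assertion that $\mathcal{E}\boxtimes\mathcal{F}(n)$ is Ulrich for $(X\times Y,\mathcal{O}_X(1)\boxtimes\mathcal{O}_Y(1))$.

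There is no genuinely hard step; the only two points deserving a moment's attention are bookkeeping. The twist is taken to be exactly $n=\mathrm{dim}(X)$ so that the intervals $[1,n]$ and $[n+1,n+m]$ partition the interval $[1,n+m]$ of twists occurring in the Ulrich condition on the product — any other twist on $\mathcal{F}$, or an additional twist on $\mathcal{E}$, would leave a gap where neither Ulrich vanishing applies. And one should record that the Künneth formula is available over the arbitrary field $k$ used throughout the paper, which it is since $X$ and $Y$ are proper over $k$ and $-\otimes_k-$ is exact. No positivity hypothesis beyond very ampleness of $\mathcal{O}_X(1)$ and $\mathcal{O}_Y(1)$ enters.
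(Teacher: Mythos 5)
Your proof is correct: the identification $(\mathcal{E}\boxtimes\mathcal{F}(n))(-i)\simeq\mathcal{E}(-i)\boxtimes\mathcal{F}(n-i)$, the K\"unneth decomposition over the base field, and the splitting of the twist range $[1,n+m]$ into $[1,n]$ (handled by the Ulrich vanishing for $\mathcal{E}$) and $[n+1,n+m]$ (handled by that for $\mathcal{F}$) is precisely the standard argument. The paper gives no proof of its own, citing \cite{BV}, (3.5), and the proof there is essentially this same K\"unneth computation, so your approach matches the intended one.
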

\begin{lem}[\cite{BV}, (3.6)]
	Let $\pi\colon X\rightarrow Y$ be a finite surjective morphism, $\mathcal{L}$ a very ample line bundle on $Y$ and $\mathcal{E}$ a vector bundle on $X$. Then $\mathcal{E}$ is an Ulrich bundle for $(X,\pi^*\mathcal{L})$ if and only if $\pi_*\mathcal{E}$ is an Ulrich bundle for $(Y,\mathcal{L})$. 
\end{lem}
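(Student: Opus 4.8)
The plan is to reduce the statement to a cohomological computation and then invoke the projection formula together with the fact that $\pi$ is finite. First I would note that since $\pi$ is finite, it is in particular affine, so $R^q\pi_*\mathcal{G} = 0$ for all $q > 0$ and any coherent sheaf $\mathcal{G}$ on $X$; hence the Leray spectral sequence degenerates and gives $H^q(X,\mathcal{G}) \cong H^q(Y,\pi_*\mathcal{G})$ for all $q$. Applying this to $\mathcal{G} = \mathcal{E}\otimes \pi^*\mathcal{L}^{-i}$ and using the projection formula $\pi_*(\mathcal{E}\otimes\pi^*\mathcal{L}^{-i}) \cong (\pi_*\mathcal{E})\otimes\mathcal{L}^{-i}$, I obtain the key identity
\begin{equation*}
	H^q\bigl(X,\mathcal{E}\otimes\pi^*\mathcal{L}^{-i}\bigr) \;\cong\; H^q\bigl(Y,(\pi_*\mathcal{E})\otimes\mathcal{L}^{-i}\bigr)
\end{equation*}
for every $q$ and every $i$.

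Next I would observe that $\pi$ being finite and surjective forces $\dim(X) = \dim(Y) =: n$ (the generic fibre is zero-dimensional and $\pi$ is dominant), so the range $1 \le i \le \dim(X)$ appearing in the Ulrich condition for $(X,\pi^*\mathcal{L})$ is literally the same range $1 \le i \le \dim(Y)$ appearing in the Ulrich condition for $(Y,\mathcal{L})$. Therefore $h^q(X,\mathcal{E}\otimes\pi^*\mathcal{L}^{-i}) = 0$ for all $q$ and all $1 \le i \le n$ if and only if $h^q(Y,(\pi_*\mathcal{E})\otimes\mathcal{L}^{-i}) = 0$ for all $q$ and all $1 \le i \le n$. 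Unwinding the definition of Ulrich bundle given in Section~3, the left-hand vanishing is exactly the statement that $\mathcal{E}$ is Ulrich for $(X,\pi^*\mathcal{L})$, and the right-hand vanishing is exactly the statement that $\pi_*\mathcal{E}$ is Ulrich for $(Y,\mathcal{L})$.

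To complete the argument I should check the two implicit regularity conditions: that $\pi^*\mathcal{L}$ is again very ample (or at least that the Ulrich definition makes sense for it) and that $\pi_*\mathcal{E}$ is a vector bundle. The first point is standard: the pullback of a very ample line bundle along a finite morphism is very ample. For the second, since $\pi$ is finite and $X$, $Y$ are smooth projective (both are Brauer--Severi varieties or, more generally, we may assume $Y$ normal), $\pi$ is automatically flat when $X$ is Cohen--Macaulay and $Y$ regular; flatness of $\pi$ together with local freeness of $\mathcal{E}$ then gives local freeness of $\pi_*\mathcal{E}$. The only genuinely delicate point, and the one I would state carefully, is this finiteness/flatness bookkeeping guaranteeing $\pi_*\mathcal{E}$ is a bundle rather than merely a coherent sheaf; everything else is a formal consequence of the projection formula and the vanishing of higher direct images for an affine morphism. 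Once that is in place, the "if and only if" follows immediately from the displayed cohomology isomorphism.
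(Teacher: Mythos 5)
Your core argument is correct and is exactly the standard one: the paper itself gives no proof of this lemma (it is imported from \cite{BV}), and the proof there is precisely your reduction --- $\pi$ finite, hence affine, kills the higher direct images, so Leray plus the projection formula give $H^q(X,\mathcal{E}\otimes\pi^*\mathcal{L}^{-i})\simeq H^q(Y,\pi_*\mathcal{E}\otimes\mathcal{L}^{-i})$ for all $q,i$, and $\dim X=\dim Y$ because $\pi$ is finite and surjective, so the two Ulrich vanishing conditions coincide term by term.

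Two of your ``regularity checks'' are inaccurate, though neither touches the cohomological equivalence. First, the pullback of a very ample line bundle under a finite morphism is ample but in general \emph{not} very ample: for a hyperelliptic curve $X$ with double cover $\pi\colon X\to\mathbb{P}^1$ and $\mathcal{L}=\mathcal{O}_{\mathbb{P}^1}(1)$, the bundle $\pi^*\mathcal{L}$ is the $g^1_2$, which is not very ample once $g(X)\geq 2$. This is harmless here because the Ulrich condition is purely a cohomological vanishing, and in every application in the paper (the base change $X\otimes_k L\to X$ of a Brauer--Severi variety, and $C^m\to S^m(C)$) the pullback is in fact very ample; but you should not state the false general claim. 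Second, your flatness bookkeeping for the local freeness of $\pi_*\mathcal{E}$ needs miracle flatness with $Y$ \emph{regular} (normal is not enough) and $X$ Cohen--Macaulay; these hypotheses are not part of the lemma as stated, although they hold in all uses made of it in the paper. Alternatively, one can avoid flatness of $\pi$ altogether: a coherent sheaf on a smooth polarized $Y$ satisfying the Ulrich vanishing is arithmetically Cohen--Macaulay, hence maximal Cohen--Macaulay at every point and therefore locally free, so $\pi_*\mathcal{E}$ is automatically a bundle whenever the vanishing holds and $Y$ is smooth.
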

In the special case where $X$ is a Brauer--Severi variety, we also have:
\begin{prop}
	Let $X$ be a Brauer--Severi variety of period $p$ with splitting field $L$ and $d\geq 1$. A vector bundle $\mathcal{E}$ is an Ulrich bundle for $(X,\mathcal{O}_X(pd))$ if and only if $\mathcal{E}\otimes_k \bar{k}$ is an Ulrich bundle for $(X\otimes_k L,\mathcal{O}_{X\otimes_k L}(pd))$.
\end{prop}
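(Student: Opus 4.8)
The plan is to observe that the defining vanishing conditions of an Ulrich bundle are simultaneously preserved and detected by faithfully flat base change along the chosen field extension, and that every field extension $k\subset F$ (in particular $k\subset L$ and $k\subset\bar k$) is faithfully flat. So the statement should reduce to a few standard compatibilities of cohomology and sheaf operations with base change, with essentially nothing to prove beyond bookkeeping.

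First I would assemble those compatibilities. Write $F$ for the field being extended to and $X_F:=X\otimes_k F$. (i) By the very definition of $\mathcal{O}_X(p)$ as the descent to $X$ of $\mathcal{O}_{\mathbb{P}^n}(p)$ over a splitting field, one has $\mathcal{O}_X(pd)\otimes_k F\simeq\mathcal{O}_{X_F}(pd)$; moreover $\mathrm{dim}(X)=\mathrm{dim}(X_F)=n$, and $\mathcal{O}_{X_F}(pd)$ is very ample by Theorem 2.1, so both sides of the asserted equivalence genuinely concern polarized varieties. (ii) Base change commutes with tensor products and with twisting a bundle, so $\bigl(\mathcal{E}\otimes\mathcal{O}_X(-ipd)\bigr)\otimes_k F\simeq(\mathcal{E}\otimes_k F)\otimes\mathcal{O}_{X_F}(-ipd)$, and $\mathcal{E}\otimes_k F$ is again locally free of the same rank, local freeness being stable under arbitrary base change. (iii) Since $X$ is proper over $k$ and $k\subset F$ is flat, flat base change for coherent cohomology gives a natural isomorphism $H^q(X_F,\mathcal{F}\otimes_k F)\simeq H^q(X,\mathcal{F})\otimes_k F$ for every coherent sheaf $\mathcal{F}$ on $X$ and every $q$, and each $H^q(X,\mathcal{F})$ is a finite-dimensional $k$-vector space.

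Putting these together, for every $q\in\mathbb{Z}$ and every $i$ with $1\le i\le n$ one obtains
\[
h^q\bigl(X_F,(\mathcal{E}\otimes_k F)(-ipd)\bigr)=\mathrm{dim}_F\bigl(H^q(X,\mathcal{E}(-ipd))\otimes_k F\bigr)=h^q\bigl(X,\mathcal{E}(-ipd)\bigr),
\]
using $\mathrm{dim}_F(V\otimes_k F)=\mathrm{dim}_k V$ for a finite-dimensional $k$-vector space $V$. Hence the groups $H^q(X,\mathcal{E}(-ipd))$ for $1\le i\le n$, $q\in\mathbb{Z}$, all vanish exactly when the corresponding groups $H^q(X_F,(\mathcal{E}\otimes_k F)(-ipd))$ all vanish, which is precisely the claimed equivalence (take $F=\bar k$, respectively $F=L$). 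The argument is entirely formal; the only steps worth a moment's care are identifying the polarizations under base change in (i) and invoking flat base change in (iii), and I do not expect either to present a real obstacle.
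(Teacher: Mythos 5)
Your proposal is correct and follows essentially the same route as the paper: the paper's proof also rests on the base-change isomorphism $H^i(X,\mathcal{F})\otimes_k E\simeq H^i(X\otimes_k E,\mathcal{F}\otimes_k E)$ for a field extension $k\subset E$, together with the fact that $\mathcal{O}_X(pd)$ is very ample if and only if its base change is (the paper cites \cite{LIV}, Lemma 3.2(2), while you invoke Theorem 2.1 — an immaterial difference). Your write-up just spells out the bookkeeping (compatibility of twisting with base change, equality of dimensions) that the paper leaves implicit.
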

\begin{proof}
	The Brauer--Severi variety $X$ is embedded into $\mathbb{P}^N$ via $\mathcal{O}_X(pd)$ with the morphism $\phi_{pd}$ given in Theorem 2.1. Note that $H^i(X,\mathcal{F})\otimes_k E\simeq H^i(X\otimes_k E,\mathcal{F}\otimes_k E)$ for any coherent sheaf $\mathcal{F}$ and any field extension $k\subset E$. The assertion then follows from the fact that $\mathcal{O}_X(pd)$ is very ample if and only if $\mathcal{O}_{X\otimes_k L}(pd)$ is (see \cite{LIV}, Lemma 3.2 (2)).
\end{proof}

\section{Exceptional collections and Beilinson type spectral sequence}
Let $\mathcal{D}$ be a triangulated category and $\mathcal{C}$ a triangulated subcategory. The subcategory $\mathcal{C}$ is called \emph{thick} if it is closed under isomorphisms and direct summands. Note that there are different definitions of thick subcategory in the literature. For a subset $A$ of objects of $\mathcal{D}$ we denote by $\langle A\rangle$ the smallest full thick subcategory of $\mathcal{D}$ containing the elements of $A$. For a smooth projective variety $X$ over $k$, we denote by $D^b(X)$ the bounded derived category of coherent sheaves on $X$. Moreover, if $B$ is an associated $k$-algebra, we write $D^b(B)$ for the bounded derived category of finitely generated left $B$-modules.
\begin{defi}
	\textnormal{Let $A$ be a division algebra over $k$, not necessarily central. An object $\mathcal{E}^{\bullet}\in D^b(X)$ is called \emph{$A$-exceptional} if $\mathrm{End}(\mathcal{E}^{\bullet})=A$ and $\mathrm{Hom}(\mathcal{E}^{\bullet},\mathcal{E}^{\bullet}[r])=0$ for $r\neq 0$. By \emph{weak exceptional object}, we mean $A$-exceptional for some division algebra $A$ over $k$. 
		If $A=k$, the object $\mathcal{E}^{\bullet}$ is called \emph{exceptional}. } 
\end{defi}
\begin{defi}
	\textnormal{A totally ordered set $\{\mathcal{E}^{\bullet}_0,...,\mathcal{E}^{\bullet}_n\}$ of weak exceptional objects on $X$ is called an \emph{weak exceptional collection} if $\mathrm{Hom}(\mathcal{E}^{\bullet}_i,\mathcal{E}^{\bullet}_j[r])=0$ for all integers $r$ whenever $i>j$. A weak exceptional collection is \emph{full} if $\langle\{\mathcal{E}^{\bullet}_0,...,\mathcal{E}^{\bullet}_n\}\rangle=D^b(X)$ and \emph{strong} if $\mathrm{Hom}(\mathcal{E}^{\bullet}_i,\mathcal{E}^{\bullet}_j[r])=0$ whenever $r\neq 0$. If the set $\{\mathcal{E}^{\bullet}_0,...,\mathcal{E}^{\bullet}_n\}$ consists of exceptional objects it is called \emph{exceptional collection}.}
\end{defi}
The notion of a full exceptional collection is a special case of what is called a semiorthogonal decomposition of $D^b(X)$. Recall that a full triangulated subcategory $\mathcal{D}$ of $D^b(X)$ is called \emph{admissible} if the inclusion $\mathcal{D}\hookrightarrow D^b(X)$ has a left and right adjoint functor. 
\begin{defi}
	\textnormal{Let $X$ be a smooth projective variety over $k$. A sequence $\mathcal{D}_0,...,\mathcal{D}_n$ of full triangulated subcategories of $D^b(X)$ is called \emph{semiorthogonal} if all $\mathcal{D}_i\subset D^b(X)$ are admissible and $\mathcal{D}_j\subset \mathcal{D}_i^{\perp}=\{\mathcal{F}^{\bullet}\in D^b(X)\mid \mathrm{Hom}(\mathcal{G}^{\bullet},\mathcal{F}^{\bullet})=0$, $\forall$ $ \mathcal{G}^{\bullet}\in\mathcal{D}_i\}$ for $i>j$. Such a sequence defines a \emph{semiorthogonal decomposition} of $D^b(X)$ if the smallest full thick subcategory containing all $\mathcal{D}_i$ equals $D^b(X)$.}
\end{defi}
For a semiorthogonal decomposition we write $D^b(X)=\langle \mathcal{D}_0,...,\mathcal{D}_n\rangle$.
\begin{rema}
	\textnormal{Let $\mathcal{E}^{\bullet}_0,...,\mathcal{E}^{\bullet}_n$ be a full weak exceptional collection on $X$. It is easy to verify that by setting $\mathcal{D}_i=\langle\mathcal{E}^{\bullet}_i\rangle$ one gets a semiorthogonal decomposition $D^b(X)=\langle \mathcal{D}_0,...,\mathcal{D}_n\rangle$.}
\end{rema}
\begin{exam}
	\textnormal{Let $X$ be a Brauer--Severi variety of dimension $n$ and $\mathcal{V}_i$, $i\in\mathbb{Z}$, the vector bundles from the introduction. Then the ordered set $\{\mathcal{V}_0,\mathcal{V}_1,...,\mathcal{V}_n\}$ is a full weak exceptional collection (see \cite{BERV} or \cite{ORV}, Example 1.17).}
\end{exam}
Let $\mathcal{E}^{\bullet}$ be an exceptional object in $D^b(X)$. For any object $\mathcal{F}^{\bullet}\in D^b(X)$ there is a \emph{left} and \emph{right mutation} with respect to $\mathcal{F}$ give by the distinguished triangles
\begin{eqnarray*}
	L_{\mathcal{E}^{\bullet}}\mathcal{F}\longrightarrow \mathrm{Hom}^{\bullet}(\mathcal{E}^{\bullet},\mathcal{F})\otimes \mathcal{E}^{\bullet}\longrightarrow \mathcal{F}\longrightarrow L_{\mathcal{E}^{\bullet}}\mathcal{F}[1],\\
	R_{\mathcal{E}^{\bullet}}\mathcal{F}[-1]\longrightarrow \mathcal{F}\longrightarrow \mathrm{Hom}^{\bullet}(\mathcal{F},\mathcal{E}^{\bullet})^*\otimes \mathcal{E}^{\bullet}\longrightarrow R_{\mathcal{E}^{\bullet}}\mathcal{F}.
\end{eqnarray*}
If $\{\mathcal{E}^{\bullet},\mathcal{F}^{\bullet}\}$ is an exceptional pair, then $\{L_{\mathcal{E}^{\bullet}}\mathcal{F}^{\bullet},\mathcal{E}^{\bullet}\}$ and $\{\mathcal{F}^{\bullet}, R_{\mathcal{F}^{\bullet}},\mathcal{E}^{\bullet}\}$ are exceptional pairs, too. 

Now let $\mathcal{E}_0,...,\mathcal{E}_n$ be a full weak exceptional collection of coherent sheaves. We call the two collections obtained in the following way
\begin{eqnarray*}
	\mathcal{E}^{\vee}_i:=L_{\mathcal{E}_0}L_{\mathcal{E}_2}\cdots L_{\mathcal{E}_{n-i-1}}\mathcal{E}_{n-i}:=L_{\mathcal{E}_1\cdots\mathcal{E}_{n-i-1}}\mathcal{E}_{n-i},\\
	^{\vee}\mathcal{E}_i:=R_{\mathcal{E}_n}R_{\mathcal{E}_{n-1}}\cdots R_{\mathcal{E}_{n-i+1}}\mathcal{E}_{n-i}:=R_{\mathcal{E}_n\cdots \mathcal{E}_{n-i+1} }\mathcal{E}_{n-i}  
\end{eqnarray*}
the \emph{left} respectively \emph{right dual} of $\mathcal{E}_0,...,\mathcal{E}_n$. Note that the left and right dual are again full weak exceptional collections. 
In a similar way, one can define left and right mutation for a pair $\{A,B\}$ of admissible subcategories of $D^b(X)$. In this case, performing left or right mutation commutes with finite field extension. If for instance $A=\langle \mathcal{G}_1,...,\mathcal{G}_r\rangle$ and $B=\langle\mathcal{H}_1,...,\mathcal{H}_s\rangle$ are generated by weak exceptional collections of coherent sheaves, one sets
\begin{eqnarray*}
	R_B\mathcal{F}^{\bullet}:=R_{\mathcal{H}_s}R_{\mathcal{H}_{s-1}}\cdots R_{\mathcal{H}_1}\mathcal{F}^{\bullet}.
\end{eqnarray*}
Obviously, if $B=\langle\mathcal{H}\rangle$ is generated by a single weak exceptional coherent sheaf, we have $R_B\mathcal{F}^{\bullet}=R_{\mathcal{H}}\mathcal{F}^{\bullet}$.
\begin{thm}
	Let $X$ be a smooth projective variety over a field $k$ and $\mathcal{E}_0,...,\mathcal{E}_n$ a full weak exceptional collection of coherent sheaves. Then for any coherent sheaf $\mathcal{F}$ there is a spectral sequence
	\begin{eqnarray}
		E^{p,q}_1=\mathrm{Ext}^q(R_{\mathcal{E}_n\cdots\mathcal{E}_{p+n+1}}\mathcal{E}_{p+n},\mathcal{F})\otimes \mathcal{E}_{p+n}\Rightarrow E^{p+q}=\begin{cases}\mathcal{F}& \text{if} \;\;\;\;\; p+q=0\\
			0& \text{otherwise}
		\end{cases}
	\end{eqnarray}
	The grading is bounded by $0\leq q\leq n$ and $-n\leq p\leq 0$.
\end{thm}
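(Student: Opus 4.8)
The plan is to mimic the classical construction of Beilinson's resolution of the diagonal, but now working with the full weak exceptional collection $\mathcal{E}_0,\dots,\mathcal{E}_n$ and its right dual collection $^{\vee}\mathcal{E}_0,\dots,{}^{\vee}\mathcal{E}_n$ on $X\times X$. The key input is the standard fact that, given a full (weak) exceptional collection, the object $\mathcal{O}_{\Delta}\in D^b(X\times X)$ admits a filtration (a Postnikov system / convolution of a complex) whose graded pieces are $ ^{\vee}\mathcal{E}_i\boxtimes\mathcal{E}_i$; equivalently there is a resolution
\begin{eqnarray*}
	^{\vee}\mathcal{E}_0\boxtimes\mathcal{E}_0\longrightarrow {}^{\vee}\mathcal{E}_1\boxtimes\mathcal{E}_1\longrightarrow\cdots\longrightarrow {}^{\vee}\mathcal{E}_n\boxtimes\mathcal{E}_n
\end{eqnarray*}
which is quasi-isomorphic to $\mathcal{O}_{\Delta}$ after placing the terms in the appropriate cohomological degrees. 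This is exactly the content of Gorodentsev--Rudakov mutation theory transported to the weak (division-algebra) setting; the point is that all mutation operations used to define $^{\vee}\mathcal{E}_i$ commute with base change to $\bar k$, and over $\bar k$ the statement is the classical one, so it descends. First I would establish (or cite, e.g. from \cite{BERV}, \cite{ORV}) the existence of this diagonal resolution in terms of the right-dual collection.

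Second, having the resolution of $\mathcal{O}_\Delta$, I would apply the Fourier--Mukai machinery. Let $p_1,p_2\colon X\times X\to X$ be the projections. For any coherent sheaf $\mathcal F$ we have $\mathcal F\simeq Rp_{2*}(p_1^*\mathcal F\otimes^{L}\mathcal O_\Delta)$. Substituting the diagonal resolution and using the projection formula together with $Rp_{2*}(p_1^*\mathcal F\otimes {}^{\vee}\mathcal E_i\boxtimes\mathcal E_i)\simeq R\Gamma(X,\mathcal F\otimes {}^{\vee}\mathcal E_i)\otimes\mathcal E_i$, we obtain a complex whose convolution is $\mathcal F$ and whose entries are $R\mathrm{Hom}^{\bullet}({}^{\vee}\mathcal E_i{}^{\vee},\mathcal F)\otimes\mathcal E_i$ — here one must be careful that, since $\mathrm{End}(\mathcal E_i)$ is a division algebra $D_i$ rather than $k$, the groups $\mathrm{Ext}^q(-,\mathcal F)$ are right $D_i$-modules and the tensor product $\otimes_{D_i}\mathcal E_i$ is the correct one, but this is purely formal. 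The hypercohomology spectral sequence of this convolution then has $E_1$-page $E_1^{p,q}=\mathrm{Ext}^q(({}^{\vee}\mathcal E_{p+n}),\mathcal F)\otimes\mathcal E_{p+n}$ (after reindexing so that the dual collection is numbered $0,\dots,n$ from the right end, which accounts for the shift $p\mapsto p+n$ and the ranges $-n\le p\le 0$, $0\le q\le n$), converging to $\mathcal F$ in total degree $0$ and to $0$ otherwise.

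Third, I would identify $^{\vee}\mathcal E_{p+n}$ explicitly with $R_{\mathcal{E}_n\cdots\mathcal{E}_{p+n+1}}\mathcal{E}_{p+n}$, which is exactly the definition of the right dual collection given just before the theorem statement; this is a bookkeeping matching of indices. The convergence bounds $0\le q\le n$ follow from $\dim X=n$ (cohomological amplitude of $R\Gamma$), and $-n\le p\le 0$ from the length $n+1$ of the collection. The main obstacle is the first step: making the diagonal resolution rigorous in the weak-exceptional setting, i.e. checking that the Postnikov/convolution data defining $\mathcal O_\Delta$ in terms of $^{\vee}\mathcal E_i\boxtimes\mathcal E_i$ exists over $k$ and not merely over $\bar k$. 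The cleanest route is a descent argument: the two-sided dual construction and the mutations involved are all compatible with the field extension $k\subset\bar k$ (as noted in the excerpt, mutation of admissible subcategories commutes with finite field extension), the resolution is unique once its graded pieces are fixed, and the Galois descent datum on $\mathcal O_\Delta$ over $\bar k$ is the canonical one, hence it descends together with the resolution. Once that is in place, everything else is the standard manipulation of the Fourier--Mukai kernel, and the spectral sequence in the statement drops out.
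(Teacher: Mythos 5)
Your route is genuinely different from the paper's. The paper never leaves $D^b(X)$: it sets $V^{\bullet}_i=\mathrm{Hom}^{\bullet}(R_{\mathcal{E}_n\cdots\mathcal{E}_{i+1}}\mathcal{E}_i,\mathcal{F})=\mathrm{Hom}^{\bullet}(\mathcal{E}_i,L_{\mathcal{E}_{i+1}\cdots\mathcal{E}_n}\mathcal{F})$, writes the distinguished triangles coming from the successive right (equivalently left) mutations of $\mathcal{F}$ itself, observes that these triangles assemble into a canonical Postnikov system exhibiting $\mathcal{F}$ as a right convolution of the complex $L^{\bullet}$ with terms $V^{\bullet}_i\otimes\mathcal{E}_i$ (citing \cite{GKV}, p.~390--392, and \cite{CMR1V}), and then invokes the spectral sequence $E_1^{p,q}=\Phi^q(L^p)\Rightarrow\Phi^{p+q}(\mathcal{F})$ of a convolution for the cohomology-sheaf functor $\Phi^{\bullet}$, followed by reindexing. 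So no resolution of the diagonal, no Fourier--Mukai kernel and no descent from $\bar{k}$ enter the argument. Your plan instead rebuilds the classical Beilinson picture on $X\times X$; that would buy a kernel-level statement, but it front-loads exactly the hardest point.

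And that point is a genuine gap. The resolution of $\mathcal{O}_{\Delta}$ with graded pieces ${}^{\vee}\mathcal{E}_i\boxtimes\mathcal{E}_i$, obtained ``by descent since over $\bar{k}$ the statement is classical,'' does not go through as stated: after base change the collection is no longer weak exceptional (for a Brauer--Severi variety $\mathcal{V}_i\otimes_k\bar{k}\simeq\mathcal{O}(i)^{\oplus d_i}$ has a matrix algebra as endomorphisms), so the classical theorem over $\bar{k}$ does not hand you a complex with base-changed pieces of that shape to descend; and with the plain $\boxtimes_k$ the pieces are numerically impossible. Concretely, on a non-split Brauer--Severi conic $C$ the relevant kernel piece must base-change to the line bundle $\mathcal{O}(-1)\boxtimes\mathcal{O}(1)$ on $\mathbb{P}^1\times\mathbb{P}^1$, whereas any sheaf of the form $\mathcal{W}\boxtimes_k\mathcal{V}_1$ has even rank. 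The correct pieces involve the tensor product over $D_i=\mathrm{End}(\mathcal{E}_i)$, and then the Fourier--Mukai computation outputs $E_1$-terms $\mathrm{Ext}^q(\,\cdot\,,\mathcal{F})\otimes_{D_i}\mathcal{E}_i$ rather than the plain tensor in the statement; you flag this $D_i$-module issue but dismiss it as ``purely formal,'' when it is precisely the non-classical content and it changes the $E_1$-page (test your sequence on $\mathcal{F}=\mathcal{V}_1$ on $C$: $\mathrm{Hom}(\mathcal{V}_1,\mathcal{V}_1)\otimes_k\mathcal{V}_1$ has rank $8$, while $\mathcal{F}$ has rank $2$). So you must either carry the $\otimes_{D_i}$-version through coherently and then explain how to recover the asserted form, or avoid the diagonal altogether and argue with the mutation triangles in $D^b(X)$ as the paper does; note also that descending the Postnikov/convolution data is not automatic from fixing the graded pieces. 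Finally, a small bookkeeping slip: by the definition of the right dual, $R_{\mathcal{E}_n\cdots\mathcal{E}_{p+n+1}}\mathcal{E}_{p+n}={}^{\vee}\mathcal{E}_{-p}$, not ${}^{\vee}\mathcal{E}_{p+n}$, and accordingly the kernel piece paired with $\mathcal{E}_{p+n}$ should be $({}^{\vee}\mathcal{E}_{-p})^{\vee}$ in the first factor.
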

\begin{proof}
	The proof follows directly from general facts on performing successive left and right mutations and is analogous to the proof of Theorem 3.16 of \cite{CMR1V}. We sketch the proof. Let 
	\begin{center}
		$V^{\bullet}_i=\mathrm{Hom}^{\bullet}(R_{\mathcal{E}_n\cdots \mathcal{E}_{i+1} }\mathcal{E}_{i},\mathcal{F})=\mathrm{Hom}^{\bullet}(\mathcal{E}_i,L_{\mathcal{E}_{i+1}\cdots \mathcal{E}_{n} }\mathcal{F})$.
	\end{center}
	So the triangles defining consequent right and left mutations of $\mathcal{F}$ can be written as
	\begin{eqnarray*}
		V^{\bullet}_{\mu}\otimes \mathcal{E}_{\mu}[-1]\longrightarrow R_{\mathcal{E}_n\cdots \mathcal{E}_{\mu}}\mathcal{F}[-1]\longrightarrow R_{\mathcal{E}_n\cdots \mathcal{E}_{\mu-1}}\mathcal{F}\longrightarrow V^{\bullet}_{\mu}\otimes \mathcal{E}_{\mu},\\
		V^{\bullet}_{\mu}\otimes \mathcal{E}_{\mu}\longrightarrow L_{\mathcal{E}_{\mu+1}\cdots \mathcal{E}_n}\mathcal{F}[n]\longrightarrow L_{\mathcal{E}_{\mu}\cdots \mathcal{E}_n}\mathcal{F}[n+1]\longrightarrow V^{\bullet}_{\mu}\otimes \mathcal{E}_{\mu}[1].
	\end{eqnarray*}
	These triangles give rise to a complex
	\begin{eqnarray*}
		L^{\bullet}: 0\longrightarrow V^{\bullet}_0\otimes \mathcal{E}_0\longrightarrow V^{\bullet}_2\otimes \mathcal{E}_2\longrightarrow \cdots \longrightarrow V^{\bullet}_{n-1}\otimes \mathcal{E}_{n-1}\longrightarrow V^{\bullet}_n\otimes \mathcal{E}_n\longrightarrow 0 
	\end{eqnarray*}
	which is functorial in $\mathcal{F}$ (see \cite{GKV}, p.391). The left, respectively right, mutation produce a canonical Postnikov-system (see \cite{GKV}, p.390-392 or \cite{CMR1V}, p.86-87) which identifies $\mathcal{F}$ with a canonical right convolution of the above complex. Then, for an arbitrary linear covariant cohomological functor $\Phi^{\bullet}$, there exists a spectral sequence with $E_1$-term $E_1^{pq}=\Phi^q(L^p)$ which converges to $\Phi^{p+q}(\mathcal{F})$. In particular, if we take $\Phi^{\bullet}$ to be the cohomology functor wich takes a complex to its cohomology sheaf, we have
	\begin{eqnarray*}
		\Phi^{\beta}(\mathcal{F})=\begin{cases}\mathcal{F}& \text{for} \;\;\beta=0\\
			0& \text{otherwise}
		\end{cases}
	\end{eqnarray*}
	After an index change (see \cite{CMR1V}, p.87), this gives finally the desired spectral sequence.
\end{proof}
We recall the following fact, that is used frequently from now on.

\begin{prop}[\cite{NV}, Proposition 3.4]
	Let $X$ be a proper $k$-scheme and $\mathcal{F}$ and $\mathcal{G}$ two coherent sheaves. If $\mathcal{F}\otimes_k \bar{k}\simeq \mathcal{G}\otimes_k \bar{k}$, then $\mathcal{F}$ is isomorphic to $\mathcal{G}$.
\end{prop}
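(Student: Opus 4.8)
The plan is to reduce the assertion to a statement about finitely generated modules over a finite-dimensional $k$-algebra and then invoke the Noether--Deuring theorem. First I would record two standard base-change facts. Since $X$ is proper over $k$ and $\mathcal{F},\mathcal{G}$ are coherent, the $k$-vector space $\mathrm{Hom}_X(\mathcal{F},\mathcal{G})=H^0(X,\mathcal{H}om(\mathcal{F},\mathcal{G}))$ is finite-dimensional, and since $\mathcal{H}om$ of finitely presented sheaves commutes with flat pullback while $H^0$ commutes with flat base change of the ground field, there is a natural isomorphism
\[
\mathrm{Hom}_X(\mathcal{F},\mathcal{G})\otimes_k\bar{k}\;\xrightarrow{\ \sim\ }\;\mathrm{Hom}_{X_{\bar{k}}}(\mathcal{F}\otimes_k\bar{k},\,\mathcal{G}\otimes_k\bar{k}),
\]
and likewise for $\mathrm{End}$, compatibly with composition. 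Moreover, since the isomorphism $\mathcal{F}\otimes_k\bar{k}\simeq\mathcal{G}\otimes_k\bar{k}$ involves only a finite amount of data, it is already defined over some finite subextension $k'/k$, so $\bar{k}$ may be replaced by a finite extension of $k$ throughout.

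Next I would put $R=\mathrm{End}_X(\mathcal{F})$, a finite-dimensional associative $k$-algebra, and $M=\mathrm{Hom}_X(\mathcal{F},\mathcal{G})$, which is a finitely generated right $R$-module under precomposition. Fixing an isomorphism $\theta\colon\mathcal{F}\otimes_k\bar{k}\xrightarrow{\ \sim\ }\mathcal{G}\otimes_k\bar{k}$, the map $a\mapsto\theta\circ a$ is an isomorphism $R\otimes_k\bar{k}\xrightarrow{\ \sim\ }M\otimes_k\bar{k}$ of right $(R\otimes_k\bar{k})$-modules. The Noether--Deuring theorem — applicable because $R$ is finite-dimensional over $k$ and $M,R$ are finitely generated — then yields $M\cong R$ as right $R$-modules already over $k$.

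Finally I would translate this back into a morphism of sheaves. An isomorphism $R\xrightarrow{\ \sim\ }M$ of right $R$-modules sends $1$ to some $\phi\in M=\mathrm{Hom}_X(\mathcal{F},\mathcal{G})$ for which right multiplication $r\mapsto\phi\circ r$ is a bijection of $R$. Base-changing and composing with $\theta^{-1}\circ(-)$ shows that $v:=\theta^{-1}\circ\phi_{\bar{k}}$ acts bijectively by right multiplication on $R\otimes_k\bar{k}$; as Artinian rings are Dedekind-finite, $v$ is a unit, hence $\phi_{\bar{k}}=\theta\circ v$ is an isomorphism $\mathcal{F}_{\bar{k}}\to\mathcal{G}_{\bar{k}}$. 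Since $\bar{k}/k$ is faithfully flat and $\ker(\phi)$, $\mathrm{coker}(\phi)$ are coherent, both vanish because they do after base change, so $\phi$ is an isomorphism $\mathcal{F}\to\mathcal{G}$.

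The step I expect to carry the real content is the appeal to Noether--Deuring. A more hands-on alternative would realize the locus of isomorphisms as an open subscheme of the affine space $\mathbb{A}(\mathrm{Hom}_X(\mathcal{F},\mathcal{G}))$, nonempty over $\bar{k}$; this produces a $k$-point directly when $k$ is infinite (the complement is the zero locus of a single nonzero polynomial, by factoriality of affine space), but over a finite base field one would additionally need Lang's theorem applied to the $\mathrm{Aut}(\mathcal{F})$-torsor $\mathrm{Isom}(\mathcal{F},\mathcal{G})$, with $\mathrm{Aut}(\mathcal{F})=R^{\times}$ a smooth connected affine group. Routing the argument through Noether--Deuring treats all base fields at once.
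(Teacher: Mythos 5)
Your argument is correct, and it is worth noting that the paper itself offers no proof of this statement: it is quoted directly from \cite{NV}, Proposition 3.4, so the comparison is with that source rather than with anything in the present text. The proof there runs along the standard sheaf-theoretic lines: using the base-change isomorphism $\mathrm{Hom}_X(\mathcal{F},\mathcal{G})\otimes_k \bar{k}\simeq \mathrm{Hom}_{X_{\bar{k}}}(\mathcal{F}_{\bar{k}},\mathcal{G}_{\bar{k}})$ one descends the given isomorphism to a finite extension $L/k$, then pushes forward along $X_L\to X$ to obtain $\mathcal{F}^{\oplus [L:k]}\simeq \mathcal{G}^{\oplus [L:k]}$, and finally cancels by the Krull--Schmidt property of coherent sheaves on a proper $k$-scheme (Atiyah). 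You take a genuinely different, linearized route: after the same two preliminaries (Hom base change and spreading out to a finite subextension), you transfer everything to the finite-dimensional algebra $R=\mathrm{End}_X(\mathcal{F})$ and the module $M=\mathrm{Hom}_X(\mathcal{F},\mathcal{G})$, apply Noether--Deuring to get $M\simeq R$, and then verify that a module generator $\phi$ is an isomorphism of sheaves via Dedekind-finiteness of $\mathrm{End}(\mathcal{F}_{\bar{k}})$ and faithfully flat descent of the vanishing of $\ker(\phi)$ and $\mathrm{coker}(\phi)$. The two approaches are close relatives, since Noether--Deuring is itself usually proved by a Krull--Schmidt type argument; yours has the advantage of resting only on classical module theory plus cohomological finiteness (no appeal to the Krull--Schmidt property of $\mathrm{Coh}(X)$), while the sheaf-level argument avoids your final translation step. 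Only a cosmetic caveat there: in your right-module convention the map $r\mapsto\phi\circ r$ is left multiplication by $v=\theta^{-1}\circ\phi_{\bar{k}}$, not right multiplication, which is harmless because one-sided inverses in a finite-dimensional algebra are two-sided, exactly as you use.
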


\begin{prop}
	Let $X$ be a $n$-dimensional Brauer--Severi variety over $k$. Then the right dual of the full weak exceptional collection $\mathcal{V}_0=\mathcal{O}_X,\mathcal{V}_1,...,\mathcal{V}_{n}$ is given by $^{\vee}\mathcal{V}_l=R_{\mathcal{V}_{n}\cdots\mathcal{V}_{n-l+1}}\mathcal{V}_{n-l}\simeq \wedge^l\mathcal{T}_X\otimes \mathcal{V}_{n-l}$ for $1\leq l\leq n$ and $^{\vee}\mathcal{V}_0\simeq \mathcal{V}_n$. 
\end{prop}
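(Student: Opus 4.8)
The plan is to deduce the statement from the corresponding computation on $\mathbb{P}^n$ by base change to a splitting field, using Proposition 4.7 for the descent. The case $l=0$ is immediate: by definition of the right dual no mutation is performed, so $^{\vee}\mathcal{V}_0=\mathcal{V}_n$. So fix $1\le l\le n$ and set $\mathcal{W}_l:=R_{\mathcal{V}_n\cdots\mathcal{V}_{n-l+1}}\mathcal{V}_{n-l}\in D^b(X)$, the object that must be identified with $\wedge^l\mathcal{T}_X\otimes\mathcal{V}_{n-l}$.

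First I would pass to a finite splitting field $L$ of $X$ (which exists by Section 2), so that $X\otimes_kL\simeq\mathbb{P}^n_L$ and, by Proposition 4.7 applied over $L$, $\mathcal{V}_i\otimes_kL\simeq\mathcal{O}_{\mathbb{P}^n}(i)^{\oplus d_i}$ with $d_i=\mathrm{ind}(A^{\otimes i})$. Since $k\subset L$ is faithfully flat, iterated right mutations commute with $-\otimes_kL$, as recalled above; moreover the thick subcategory generated by $\mathcal{O}_{\mathbb{P}^n}(i)^{\oplus d_i}$ coincides with $\langle\mathcal{O}_{\mathbb{P}^n}(i)\rangle$, so that $R_{\langle\mathcal{V}_i\rangle}$ base‑changes to the ordinary mutation $R_{\mathcal{O}_{\mathbb{P}^n}(i)}$ on $D^b(\mathbb{P}^n_L)$. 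Consequently
\[
\mathcal{W}_l\otimes_kL\;\simeq\;\bigl(R_{\mathcal{O}_{\mathbb{P}^n}(n)\cdots\mathcal{O}_{\mathbb{P}^n}(n-l+1)}\,\mathcal{O}_{\mathbb{P}^n}(n-l)\bigr)^{\oplus d_{n-l}}.
\]
A standard computation on $\mathbb{P}^n$ — iterating the Euler sequence $0\to\mathcal{O}\to\mathcal{O}(1)^{\oplus(n+1)}\to\mathcal{T}_{\mathbb{P}^n}\to0$ to evaluate these successive right mutations — gives $R_{\mathcal{O}_{\mathbb{P}^n}(n)\cdots\mathcal{O}_{\mathbb{P}^n}(n-l+1)}\,\mathcal{O}_{\mathbb{P}^n}(n-l)\simeq\wedge^l\mathcal{T}_{\mathbb{P}^n}\otimes\mathcal{O}_{\mathbb{P}^n}(n-l)$, a coherent sheaf placed in degree $0$. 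Since $\mathcal{T}_X$, the operation $\wedge^l(-)$ and $-\otimes\mathcal{V}_{n-l}$ all commute with the flat base change $-\otimes_kL$ (using $\mathcal{T}_X\otimes_kL\simeq\mathcal{T}_{\mathbb{P}^n_L}$), this reads
\[
\mathcal{W}_l\otimes_kL\;\simeq\;\bigl(\wedge^l\mathcal{T}_X\otimes\mathcal{V}_{n-l}\bigr)\otimes_kL .
\]

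It remains to descend. Because $-\otimes_kL$ is exact and faithfully flat and $\mathcal{W}_l\otimes_kL$ has cohomology concentrated in degree $0$, the object $\mathcal{W}_l$ is itself a coherent sheaf on $X$; applying Proposition 4.7 (that is, \cite{NV}, Proposition 3.4) to the two coherent sheaves $\mathcal{W}_l$ and $\wedge^l\mathcal{T}_X\otimes\mathcal{V}_{n-l}$, whose base changes to $L$ — hence to $\bar k$ — are isomorphic, yields $\mathcal{W}_l\simeq\wedge^l\mathcal{T}_X\otimes\mathcal{V}_{n-l}$, as claimed.

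The step requiring the most care is the middle one: one must check that the mutation functors attached to the weak exceptional bundles $\mathcal{V}_i$ — whose endomorphism algebras are the division algebras $D_i$, not $k$ — are precisely the ones whose base change is the ordinary mutation by $\langle\mathcal{O}_{\mathbb{P}^n}(i)\rangle$, and that the classical $\mathbb{P}^n$‑computation is carried out with matching index conventions; once this bookkeeping is pinned down the descent via Proposition 4.7 is formal. One could also try to argue directly over $k$ by induction on $l$, computing each right mutation step by step, but the base‑change route above is shorter and cleaner.
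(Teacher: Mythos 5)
Your proposal is correct and follows essentially the same route as the paper: base change to a finite splitting field, compute the successive right mutations on $\mathbb{P}^n$ via (wedge powers of) the twisted Euler sequence to get $\wedge^l\mathcal{T}_{\mathbb{P}^n}\otimes\mathcal{O}_{\mathbb{P}^n}(n-l)$, and descend using Proposition 4.7, with the compatibility of mutations through admissible subcategories with finite field extensions doing the same work it does in the paper. The only cosmetic difference is that the paper descends each intermediate Euler-type sequence to $X$ step by step (identifying middle terms such as $\mathcal{V}_1^{\oplus a_1}\otimes\mathcal{V}_{n-1}$ with powers of $\mathcal{V}_n$), whereas you descend only the final sheaf, which is equally valid.
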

\begin{proof}
	Let $A$ be the degree $n+1$ central simple algebra corresponding to $X$. We note that $\mathcal{V}_1\otimes_k\bar{k}\simeq \mathcal{O}_{\mathbb{P}^n}(1)^{\oplus \mathrm{ind}(A)}$. As mentioned in Section 2, the index $\mathrm{ind}(A)$ divides $n+1$. Let $n+1=a_1\cdot \mathrm{ind}(A)$.  To prove our assertion, we consider the external powers of the Euler exact sequence on $X$
	\begin{eqnarray*}
		\begin{xy}
			\xymatrix{
				0\ar[r] &\mathcal{O}_X\ar[r] &\mathcal{V}_1^{\oplus a_1}\ar[r] &\mathcal{T}_X\ar[r]& 0,\\
				0\ar[r] & \mathcal{T}_X\ar[r] &\wedge^2(\mathcal{V}_1^{\oplus a_1})\ar[r] &\wedge^2\mathcal{T}_X\ar[r] & 0,\\
				& \vdots							& \vdots											&	\vdots \\
				0\ar[r] & \wedge^{n-1}\mathcal{T}_X\ar[r] &\wedge^{n}(\mathcal{V}_1^{\oplus a_1})\ar[r] &\mathcal{O}_X(n+1)\ar[r] & 0.				
			}
		\end{xy}
	\end{eqnarray*}
	By definition, we have $^{\vee}\mathcal{V}_0=\mathcal{V}_{n}$. We will show how to calculate $^{\vee}\mathcal{V}_1$ and $^{\vee}\mathcal{V}_2$ and left the remaining cases to the reader, since they are obtained inductively. So consider $^{\vee}\mathcal{V}_1= R_{\langle\mathcal{V}_{n}\rangle}\mathcal{V}_{n-1}$. So after base change to some finite splitting field $L$ of $X$, we have
	\begin{eqnarray*}
		(^{\vee}\mathcal{V}_1)_L= R_{\langle\mathcal{V}_{n}\rangle_L}(\mathcal{V}_{n-1})_L\simeq R_{\langle\mathcal{O}_{\mathbb{P}}(n)\rangle}(\mathcal{O}_{\mathbb{P}}(n-1)^{\oplus \mathrm{rk}(\mathcal{V}_{n-1})})\simeq R_{\mathcal{O}_{\mathbb{P}}(n)}(\mathcal{O}_{\mathbb{P}}(n-1)^{\oplus \mathrm{rk}(\mathcal{V}_{n-1})}).
	\end{eqnarray*}
	We set $d_{n-1}:=\mathrm{rk}(\mathcal{V}_{n-1})$. Now consider the Euler sequence for $\mathbb{P}=\mathbb{P}^{n}$
	\begin{eqnarray*}
		0\longrightarrow \mathcal{O}_{\mathbb{P}}\longrightarrow \mathcal{O}_{\mathbb{P}}(1)^{\oplus (n+1)}\longrightarrow \mathcal{T}_{\mathbb{P}}\longrightarrow 0
	\end{eqnarray*}
	and tensor this sequence with $\mathcal{O}_{\mathbb{P}}(n-1)^{\oplus d_{n-1}}$. By setting $V_1=\mathrm{Hom}(\mathcal{O}_{\mathbb{P}}(n-1),\mathcal{O}_{\mathbb{P}}(n))^*$, we obtain
	\begin{eqnarray}
		0\longrightarrow \mathcal{O}_{\mathbb{P}}(n-1)^{\oplus d_{n-1}}\longrightarrow V_1^{\oplus d_{n-1}}\otimes \mathcal{O}_{\mathbb{P}}(n)^{\oplus (n+1)}\longrightarrow \mathcal{T}_{\mathbb{P}}^{\oplus d_{n-1}}(n-1)\longrightarrow 0.
	\end{eqnarray}
	Therefore, $R_{\mathcal{O}_{\mathbb{P}}(n)}\mathcal{O}_{\mathbb{P}}(n-1)^{\oplus d_{n-1}}\simeq \mathcal{T}_{\mathbb{P}}^{\oplus d_{n-1}}(n-1)$. The exact sequence (3) descends to the sequence 
	\begin{eqnarray*}
		0\longrightarrow \mathcal{V}_{n-1}\longrightarrow \mathcal{V}_1^{\oplus a_1}\otimes \mathcal{V}_{n-1}\longrightarrow \mathcal{T}_{X}\otimes \mathcal{V}_{n-1}\longrightarrow 0.
	\end{eqnarray*}
	Since $\mathrm{rk}(\mathcal{V}_1^{\oplus a_1}\otimes \mathcal{V}_{n-1})=d_{n-1}\cdot a_1\cdot \mathrm{ind}(A)$ and since $\mathrm{rk}(\mathcal{V}_n)=\mathrm{ind}(A^{\otimes n})=:d_n$ divides $\mathrm{ind}(A)$ (see \cite{SAV}, Theorem 5.5), there is an positive integer $s$ such that $d_n\cdot s=\mathrm{ind}(A)$. From Proposition 4.7, we conclude $\mathcal{V}_1^{\oplus a_1}\otimes \mathcal{V}_{n-1}\simeq \mathcal{V}_n^{\oplus (s\cdot d_{n-1}\cdot a_1)}$. This finally implies $^{\vee}\mathcal{V}_1= R_{\mathcal{V}_{n}}\mathcal{V}_{n-1}\simeq \mathcal{T}_{X}\otimes \mathcal{V}_{n-1}$. Let us now calculate $^{\vee}\mathcal{V}_2=R_{\mathcal{V}_{n}}R_{\mathcal{V}_{n-1}}\mathcal{V}_{n-2}$. We first calculate $R_{\mathcal{V}_{n-1}}\mathcal{V}_{n-2}$. For this, we base change and obtain $R_{\langle\mathcal{V}_{n-1}\rangle_L}(\mathcal{V}_{n-2})_L\simeq R_{\mathcal{O}_{\mathbb{P}}(n-1)}\mathcal{O}_{\mathbb{P}}(n-2)^{\oplus \mathrm{rk}(\mathcal{V}_{n-2})}$. We set $d_{n-2}:=\mathrm{rk}(\mathcal{V}_{n-2})$. Again, tensoring the Euler sequence on $\mathbb{P}$ with $\mathcal{O}_{\mathbb{P}}(n-2)^{\oplus d_{n-2}}$ gives
	\begin{eqnarray*}
		0\longrightarrow \mathcal{O}_{\mathbb{P}}(n-2)^{\oplus d_{n-2}}\longrightarrow V^{\oplus d_{n-2}}\otimes \mathcal{O}_{\mathbb{P}}(n-1)^{\oplus (n+1)}\longrightarrow \mathcal{T}_{\mathbb{P}}^{\oplus d_{n-2}}(n-2)\longrightarrow 0,
	\end{eqnarray*}
	where $V_2=\mathrm{Hom}(\mathcal{O}_{\mathbb{P}}(n-2),\mathcal{O}_{\mathbb{P}}(n-1))^*$.
	This exact sequence descends to 
	\begin{eqnarray*}
		0\longrightarrow \mathcal{V}_{n-2}\longrightarrow \mathcal{V}_1^{\oplus a_1}\otimes\mathcal{V}_{n-2}\longrightarrow \mathcal{T}_X\otimes\mathcal{V}_{n-2}\longrightarrow 0
	\end{eqnarray*}
	on $X$. Therefore, we have $R_{\mathcal{O}_{\mathbb{P}}(n-1)}\mathcal{O}_{\mathbb{P}}(n-2)^{\oplus d_{n-2}}\simeq \mathcal{T}_{\mathbb{P}}^{\oplus d_{n-2}}(n-2)$ and Proposition 4.7 yields $R_{\mathcal{V}_{n-1}}\mathcal{V}_{n-2}\simeq \mathcal{T}_X\otimes\mathcal{V}_{n-2}$. Now we have to calculate $R_{\mathcal{V}_{n}}(\mathcal{T}_X\otimes\mathcal{V}_{n-2})$. After base change, we have $R_{\langle\mathcal{V}_{p-1}\rangle_L}(\mathcal{T}_X\otimes\mathcal{V}_{n-2})_L\simeq R_{\mathcal{O}_{\mathbb{P}}(n)}(\mathcal{T}_{\mathbb{P}}(n-2)^{\oplus d_{n-2}})$. The second external power of the the Euler sequence on $\mathbb{P}$ is
	\begin{eqnarray*}
		0\longrightarrow \mathcal{T}_{\mathbb{P}}\longrightarrow \wedge^2(\mathcal{O}_{\mathbb{P}}(1)^{\oplus (n+1)})\longrightarrow \wedge^2\mathcal{T}_{\mathbb{P}}\longrightarrow 0.
	\end{eqnarray*}
	Tensoring with $\mathcal{O}_{\mathbb{P}}(n-2)^{\oplus d_{n-2}}$ gives 
	\begin{eqnarray*}
		0\longrightarrow \mathcal{T}_{\mathbb{P}}(n-2)^{\oplus d_{n-2}}\longrightarrow (\wedge^2V\otimes \mathcal{O}_{\mathbb{P}}(n))^{\oplus d_{n-2}}\longrightarrow (\wedge^2\mathcal{T}_{\mathbb{P}}(n-2))^{\oplus d_{n-2}}\longrightarrow 0, 
	\end{eqnarray*}
	where $V=\mathrm{Hom}(\mathcal{O}_{\mathbb{P}},\mathcal{O}_{\mathbb{P}}(1))^*$. Hence $R_{\mathcal{O}_{\mathbb{P}}(n)}(\mathcal{T}_{\mathbb{P}}(n-2)^{\oplus d_{n-2}})\simeq (\wedge^2\mathcal{T}_{\mathbb{P}}(n-2))^{\oplus d_{n-2}}$. Note that the latter exact sequence descends to the sequence
	\begin{eqnarray*}
		0\longrightarrow \mathcal{T}_X\otimes\mathcal{V}_{n-2}\longrightarrow (\wedge^2(\mathcal{V}_1^{\oplus a_1}))\otimes \mathcal{V}_{n-2}\longrightarrow (\wedge^2\mathcal{T}_X)\otimes\mathcal{V}_{n-2}\longrightarrow 0.
	\end{eqnarray*}
	So by Proposition 4.7 we obtain $R_{\mathcal{V}_{n}}(\mathcal{T}_X\otimes\mathcal{V}_{n-2})\simeq (\wedge^2\mathcal{T}_X)\otimes\mathcal{V}_{n-2}$. This completes the proof.
\end{proof}
\begin{rema}
	\textnormal{Proposition 4.8 gives another full weak exceptional collection on the considered Brauer--Severi variety. To our best knowledge, this collection is new in the literature.}
\end{rema}
\begin{cor}
	Let $X$ be as above. Then for any coherent sheaf $\mathcal{F}$ there is a spectral sequence
	\begin{eqnarray}
		E^{p,q}_1=\mathrm{Ext}^q(\wedge^{-p}\mathcal{T}_X\otimes \mathcal{V}_{n+p},\mathcal{F})\otimes \mathcal{V}_{n+p}\Rightarrow E^{p+q}=\begin{cases}\mathcal{F}& \text{if} \;\;\;\;\; p+q=0\\
			0& \text{otherwise}
		\end{cases}
	\end{eqnarray}
	The grading is bounded by $0\leq q\leq n$ and $-n\leq p\leq 0$.
\end{cor}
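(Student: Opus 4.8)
The plan is to obtain Corollary 4.10 as the specialization of the general Beilinson-type spectral sequence of Theorem 4.6 to the full weak exceptional collection $\mathcal{V}_0=\mathcal{O}_X,\mathcal{V}_1,\dots,\mathcal{V}_n$, using the explicit form of its right dual computed in Proposition 4.8. First I would note that by Example 4.5 this ordered set is indeed a full weak exceptional collection, and that each $\mathcal{V}_i$ is a vector bundle, hence a coherent sheaf; so Theorem 4.6 applies verbatim with $\mathcal{E}_i:=\mathcal{V}_i$ and $n=\mathrm{dim}(X)$. It produces, for any coherent sheaf $\mathcal{F}$, a spectral sequence with
\[
  E_1^{p,q}=\mathrm{Ext}^q\bigl(R_{\mathcal{V}_n\cdots\mathcal{V}_{p+n+1}}\mathcal{V}_{p+n},\,\mathcal{F}\bigr)\otimes\mathcal{V}_{p+n}
\]
converging to $\mathcal{F}$ in total degree $0$ and to $0$ otherwise, with $0\leq q\leq n$ and $-n\leq p\leq 0$.

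Then I would rewrite the mutated sheaf in the $E_1$-term via the right-dual notation from the discussion preceding Theorem 4.6, namely ${}^{\vee}\mathcal{V}_l=R_{\mathcal{V}_n\cdots\mathcal{V}_{n-l+1}}\mathcal{V}_{n-l}$. Setting $l=-p$ (equivalently $n-l=p+n$) identifies $R_{\mathcal{V}_n\cdots\mathcal{V}_{p+n+1}}\mathcal{V}_{p+n}$ with ${}^{\vee}\mathcal{V}_{-p}$, and as $p$ ranges over $-n,\dots,0$ the index $l=-p$ ranges over $n,\dots,0$, so Proposition 4.8 gives ${}^{\vee}\mathcal{V}_{-p}\simeq\wedge^{-p}\mathcal{T}_X\otimes\mathcal{V}_{n+p}$ for $-n\leq p\leq -1$, while for $p=0$ it gives ${}^{\vee}\mathcal{V}_0\simeq\mathcal{V}_n=\wedge^0\mathcal{T}_X\otimes\mathcal{V}_n$. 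Hence in all cases $R_{\mathcal{V}_n\cdots\mathcal{V}_{p+n+1}}\mathcal{V}_{p+n}\simeq\wedge^{-p}\mathcal{T}_X\otimes\mathcal{V}_{n+p}$.

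Finally I would substitute this isomorphism of coherent sheaves into the $E_1$-term; it induces an isomorphism on the Ext-groups and on the tensor factor, yielding
\[
  E_1^{p,q}=\mathrm{Ext}^q\bigl(\wedge^{-p}\mathcal{T}_X\otimes\mathcal{V}_{n+p},\,\mathcal{F}\bigr)\otimes\mathcal{V}_{n+p},
\]
and the convergence statement together with the grading bounds $0\leq q\leq n$, $-n\leq p\leq 0$ carry over unchanged from Theorem 4.6. I do not expect any real obstacle: the argument is pure substitution, and the only points needing attention are the index bookkeeping $l=-p$ and the boundary case $p=0$, where the mutation defining ${}^{\vee}\mathcal{V}_0$ is empty so that ${}^{\vee}\mathcal{V}_0=\mathcal{V}_n$, consistent with the convention $\wedge^0\mathcal{T}_X=\mathcal{O}_X$ — both already accounted for in the statement of Proposition 4.8.
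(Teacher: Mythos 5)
Your proposal is correct and is exactly the argument the paper intends: the corollary is stated without proof as an immediate consequence of Theorem 4.6 applied to the full weak exceptional collection $\mathcal{V}_0,\dots,\mathcal{V}_n$ of Example 4.5, with the mutated terms $R_{\mathcal{V}_n\cdots\mathcal{V}_{p+n+1}}\mathcal{V}_{p+n}$ replaced by their identification $\wedge^{-p}\mathcal{T}_X\otimes\mathcal{V}_{n+p}$ from Proposition 4.8. Your index bookkeeping $l=-p$ and the boundary case $p=0$ (where ${}^{\vee}\mathcal{V}_0\simeq\mathcal{V}_n=\wedge^0\mathcal{T}_X\otimes\mathcal{V}_n$) are handled correctly, so nothing is missing.
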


\section{Ulrich bundles on Brauer--Severi varieties}
Let $X$ denote a Brauer--Severi variety over an arbitrary field $k$. Recall that $\mathrm{Pic}(X)$ is generated by $\mathcal{O}_X(p)$, where $p$ denotes the period of $X$. Recall also from Theorem 2.1 that $X$ is embedded by $\mathcal{O}_X(pd)$ and that after base change to some splitting field $L$ of $X$, this morphism becomes the $pd$-uple embedding of $X\otimes_k L\simeq \mathbb{P}^n$.
\begin{prop}
	Let $X$ be a Brauer--Severi variety of dimension $n$ and period $p$. Then $(X,\mathcal{O}_X(p))$ carries an Ulrich bundle of rank $s\cdot n!$ for a suitable $s>0$. Moreover, the integer $s$ can be chosen such that the period $p$ divides $s$.
\end{prop}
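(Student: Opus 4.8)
The plan is to build the Ulrich bundle over a splitting field and then push it down to $X$ along the structural morphism of a base change, applying Lemma 3.2 twice. Concretely, fix a finite Galois splitting field $k\subseteq L$, so that $X\otimes_k L\simeq\mathbb{P}^n_L$ and $\mathcal{O}_X(p)$ corresponds to $\mathcal{O}_{\mathbb{P}^n_L}(p)$ under this isomorphism (see Section 2). Over $L$ the line bundle $\mathcal{O}_{\mathbb{P}^1_L}(p-1)$ is Ulrich for $(\mathbb{P}^1_L,\mathcal{O}_{\mathbb{P}^1}(p))$, because $h^\bullet(\mathbb{P}^1,\mathcal{O}(-1))=0$. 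Iterating Lemma 3.1 over the $n$ factors of $(\mathbb{P}^1_L)^n$ with the product polarization $\mathcal{O}(p)\boxtimes\cdots\boxtimes\mathcal{O}(p)$ therefore yields an Ulrich \emph{line} bundle $\mathcal{L}$ on $(\mathbb{P}^1_L)^n$; tracking the twist in Lemma 3.1 gives $\mathcal{L}=\mathcal{O}(p-1,2p-1,\dots,np-1)$. Let $q\colon(\mathbb{P}^1_L)^n\to\mathrm{Sym}^n\mathbb{P}^1_L=\mathbb{P}^n_L$ be the symmetrization morphism; it is finite surjective of degree $n!$ and $q^*\mathcal{O}_{\mathbb{P}^n}(1)=\mathcal{O}(1,\dots,1)$, so $q^*\mathcal{O}_{\mathbb{P}^n}(p)=\mathcal{O}(p)\boxtimes\cdots\boxtimes\mathcal{O}(p)$. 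Hence Lemma 3.2 applied to $q$ shows that $\mathcal{U}:=q_*\mathcal{L}$ is an Ulrich bundle of rank $n!$ for $(\mathbb{P}^n_L,\mathcal{O}_{\mathbb{P}^n}(p))$.

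Now transport $\mathcal{U}$ through $X\otimes_k L\simeq\mathbb{P}^n_L$, so that it becomes an Ulrich bundle for $(X\otimes_k L,\mathcal{O}_{X\otimes_k L}(p))$. The morphism $\pi\colon X\otimes_k L\to X$, being the base change of $\mathrm{Spec}\,L\to\mathrm{Spec}\,k$, is finite surjective and satisfies $\pi^*\mathcal{O}_X(p)=\mathcal{O}_{X\otimes_k L}(p)$. Therefore Lemma 3.2 applies again and $\mathcal{E}:=\pi_*\mathcal{U}$ is an Ulrich bundle for $(X,\mathcal{O}_X(p))$. Since $\pi$ is finite flat of degree $[L:k]$ and $\mathcal{U}$ is locally free of rank $n!$, the sheaf $\mathcal{E}$ is locally free of rank $[L:k]\cdot n!$, so one may take $s=[L:k]>0$. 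For the last assertion, replace $\mathcal{E}$ by $\mathcal{E}^{\oplus p}$ (still Ulrich), which gives rank $(p\,[L:k])\cdot n!$, i.e.\ $s=p\,[L:k]$, a multiple of $p$; alternatively one may choose $L$ with $p\mid[L:k]$ from the outset, e.g.\ the Galois closure of a separable splitting field of degree $\mathrm{ind}(X)$, using $p\mid\mathrm{ind}(X)$. (The same argument with $\mathcal{O}(pd)$ in place of $\mathcal{O}(p)$ handles the polarizations $\mathcal{O}_X(pd)$ as well.)

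The substantial points to verify are few, and none of them is a real obstacle. First, the computation of $\mathcal{L}$: at the $m$-th step of the iterated Lemma 3.1 the fresh $\mathbb{P}^1$-factor contributes $\mathcal{O}(p-1)$ twisted by the $(m-1)$-st power of its polarization $\mathcal{O}(p)$, i.e.\ $\mathcal{O}(mp-1)$, and one checks directly that $\mathcal{L}\otimes\mathcal{O}(-ip,\dots,-ip)$ has vanishing cohomology for $1\le i\le n$ since its $i$-th coordinate is $-1$. Second, the elementary geometry of $q$: that it is finite flat of degree $n!$ (the quotient of a smooth variety by $S_n$ onto the smooth variety $\mathbb{P}^n$, flat by miracle flatness), and that $q^*\mathcal{O}_{\mathbb{P}^n}(1)=\mathcal{O}(1,\dots,1)$ (checked on global sections). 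Finite flat pushforward preserves local freeness, so $\mathcal{U}$ and $\mathcal{E}$ are genuine vector bundles, and it preserves the Ulrich property by Lemma 3.2. The one point worth emphasizing is \emph{why} the detour through $L$ is essential: for non-split $X$ with $n\ge2$ no direct sum of line bundles on $\mathbb{P}^n$ is Ulrich for $\mathcal{O}(p)$ once $p\ge2$, so no direct sum of the bundles $\mathcal{V}_i$ descending from $\mathbb{P}^n$ can be an Ulrich bundle for $(X,\mathcal{O}_X(p))$, because Ulrich-ness passes to direct summands; pushing forward along $X\otimes_k L\to X$ sidesteps this obstruction entirely, at the cost of the factor $s=[L:k]$.
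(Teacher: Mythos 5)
Your proposal is correct and follows essentially the same route as the paper: produce a rank-$n!$ Ulrich bundle on the split form $X\otimes_k L\simeq\mathbb{P}^n_L$ and push it down along the finite surjective projection $\pi\colon X\otimes_k L\to X$ via Lemma 3.2, obtaining rank $s\cdot n!$ with $s=[L:k]$, and then get $p\mid s$ from $p\mid\mathrm{ind}(X)=[E:k]\mid[L:k]$ (your alternative $\mathcal{E}^{\oplus p}$ trick also works, but the index argument is the paper's). The only cosmetic difference is that you re-derive the rank-$n!$ Ulrich bundle on $(\mathbb{P}^n_L,\mathcal{O}(p))$ by the symmetrization construction, which is exactly the content of the result the paper simply cites (\cite{BV}, Proposition 3.1).
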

\begin{proof}
	Let $E$ be a finite separable splitting field of degree $\mathrm{ind}(X)$ and $L$ its Galois closure. Consider the projection $\pi\colon X\otimes_k L\rightarrow X$ which is finite and surjective. Since $X\otimes_k L\simeq \mathbb{P}_L^n$, there exists an Ulrich bundle $\mathcal{E}$ of rank $n!$ for $(X\otimes_k L,\pi^*\mathcal{O}_X(p))$ according to \cite{BV}, Proposition 3.1. Now Lemma 3.2 provides us with an Ulrich bundle $\pi_*\mathcal{E}$ for $(X,\mathcal{O}_X(p))$. Let us determine the rank of $\pi_*\mathcal{E}$. If $s=[L:k]$, then $\pi^*\pi_*\mathcal{E}\simeq \bigoplus_{g\in G}g^*\mathcal{E}$ for $G=\mathrm{Gal}(L|k)$. This implies $\mathrm{rk}(\pi_*\mathcal{E})=s\cdot n!$. This proves the first statement. Now, as $L$ is a finite Galois extension of $k$, we have that $[E:k]$ divides $s$. The second statement follows from the fact that the period $p$ divides the index $[E:k]$.  
\end{proof}
\begin{prop}
	Let $X$ be a Brauer--Severi variety of dimension $n$ and period $p$. Then for any $d\geq 1$, $(X,\mathcal{O}_X(pd))$ carries an Ulrich bundle of rank $s\cdot (n!)^2$ for a suitable $s>0$. Moreover, the integer $s$ can be chosen such that the period $p$ divides $s$.
\end{prop}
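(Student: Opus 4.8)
The plan is to reduce the statement to Proposition 5.1 by exploiting the fact that the $pd$-uple embedding factors through a finite morphism to a projective space on which we already understand Ulrich bundles. Concretely, after base change to a splitting field $L$, the polarization $\mathcal{O}_X(pd)$ becomes the $pd$-uple Veronese $\mathcal{O}_{\mathbb{P}^n}(pd)$ by Theorem 2.1. So first I would pass to $X\otimes_k L\simeq \mathbb{P}^n_L$ via the finite surjective projection $\pi\colon X\otimes_k L\to X$ (with $L$ the Galois closure of a separable splitting field of degree $\mathrm{ind}(X)$, as in Proposition 5.1), and use that $\pi^*\mathcal{O}_X(pd)\simeq\mathcal{O}_{\mathbb{P}^n_L}(pd)$.

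Next I would produce an Ulrich bundle for $(\mathbb{P}^n_L,\mathcal{O}_{\mathbb{P}^n_L}(pd))$ of rank $(n!)^2$. The natural way is to factor the $pd$-uple embedding as the composite of the $p$-uple Veronese $v_p\colon\mathbb{P}^n\to\mathbb{P}^M$ with a further (finite, indeed an embedding) map, or more simply to invoke Lemma 3.2 again: the $d$-uple Veronese $v_d\colon \mathbb{P}^n\to\mathbb{P}^{N'}$ is finite, $v_d^*\mathcal{O}(1)=\mathcal{O}_{\mathbb{P}^n}(d)$, and by \cite{BV}, Proposition 3.1 there is a rank-$n!$ Ulrich bundle $\mathcal{G}$ for $(\mathbb{P}^n_L,\mathcal{O}_{\mathbb{P}^n_L}(d))$; then $(v_d)_*\mathcal{G}$ is an Ulrich bundle on the image, whose pullback one controls. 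Actually the cleanest route: apply Proposition 5.1 with the polarization $\mathcal{O}_X(p)$ replaced throughout by $\mathcal{O}_X(pd)$ — i.e. I would check that the entire argument of Proposition 5.1 goes through verbatim with $p$ replaced by $pd$ everywhere, since Theorem 2.1 guarantees $\mathcal{O}_X(pd)$ is very ample and splits to a Veronese embedding for every $d\ge 1$, and $\pi^*\mathcal{O}_X(pd)$ is then the pullback of a very ample bundle. This already gives a rank $s\cdot n!$ Ulrich bundle for $(X,\mathcal{O}_X(pd))$, with $p\mid s$; but the statement asks for rank $s\cdot(n!)^2$, so some extra factor of $n!$ must enter.

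The factor $(n!)^2$ strongly suggests Lemma 3.1 (the box-product construction) is meant to be used, or rather: the $pd$-uple embedding of $X$ should be related to a Segre-type or iterated Veronese picture where two Ulrich bundles of rank $n!$ get multiplied. The approach I would take: realize $\mathcal{O}_X(pd)=\mathcal{O}_X(p)\otimes\mathcal{O}_X(p(d-1))$ and proceed by a direct construction on $X$ itself rather than on the cover — namely, find a finite surjective morphism from $X$ (or from $X\otimes_k L$) to $X$ whose pullback of $\mathcal{O}_X(p)$ is $\mathcal{O}_X(pd)$, which is impossible degree-wise; so instead I would use the composition $X\xrightarrow{\phi_{pd}}\mathbb{P}^N$, note after base change this is $v_{pd}=v_d\circ v_p$ up to linear embedding, push forward along the finite map $v_d$ a rank-$n!$ Ulrich bundle coming from $v_p$-pulled-back data, and descend. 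The main obstacle will be bookkeeping the rank through two successive pushforwards and the Galois descent simultaneously: one pushforward along the degree-$s$ cover $\pi$ multiplies rank by $s$, and the other (encoding the passage from $\mathcal{O}_X(p)$-Ulrich to $\mathcal{O}_X(pd)$-Ulrich, effectively a $d$-uple Veronese pushforward on $\mathbb{P}^n$) multiplies rank by another $n!$, giving $s\cdot n!\cdot n!=s\cdot(n!)^2$. Ensuring these two operations commute appropriately and that the resulting sheaf on $X$ is still a genuine vector bundle satisfying the Ulrich vanishing — invoking Proposition 3.3 to check Ulrich-ness can be verified after base change — is where the real care is needed, and the divisibility $p\mid s$ is then inherited exactly as in Proposition 5.1 from $p\mid\mathrm{ind}(X)\mid[L:k]$.
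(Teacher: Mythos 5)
Your proposal never arrives at a complete argument, and the route you finally commit to contains a step that fails. The paper's proof is short and uses a mechanism you never name: take the rank $s\cdot n!$ Ulrich bundle $\mathcal{E}$ for $(X,\mathcal{O}_X(p))$ from Proposition 5.1, take a finite surjective projection $\pi\colon X\to\mathbb{P}^n$ defined over $k$ (note: this is not your Galois cover $X\otimes_k L\to X$, but a map from $X$ down to projective space), take a rank $n!$ Ulrich bundle $\mathcal{F}$ for $(\mathbb{P}^n,\mathcal{O}_{\mathbb{P}^n}(d))$ by \cite{BV}, Proposition 3.1, and invoke \cite{ESWV}, Proposition 5.4 to conclude that $\mathcal{E}\otimes\pi^*\mathcal{F}$ is Ulrich for $(X,\mathcal{O}_X(pd))$; ranks multiply under tensor product, giving $s\cdot(n!)^2$, and $p\mid s$ exactly as in Proposition 5.1. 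This tensor-with-pullback statement is the missing idea in your write-up: you gesture at Lemma 3.1 (which lives on a product $X\times Y$, not on $X$ itself) and at factoring the $pd$-uple Veronese, but you never cite or reconstruct any result that multiplies the two rank-$n!$ contributions on the same variety.

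Concretely, the step in your last paragraph that would fail is the claim that a $d$-uple Veronese pushforward "multiplies rank by another $n!$": $v_d$ is a closed embedding, so pushing forward along it does not yield a vector bundle on the ambient projective space, and pushforward along any finite flat map multiplies rank by the degree of the map, never by $n!$; no descent argument is supplied either. Ironically, the "cleanest route" you mention and then abandon --- rerunning the proof of Proposition 5.1 verbatim with $pd$ in place of $p$, which is legitimate since $\mathcal{O}_X(pd)$ is very ample by Theorem 2.1 and \cite{BV}, Proposition 3.1 applies to any twist --- is correct and even stronger: it produces an Ulrich bundle of rank $s\cdot n!$ for $(X,\mathcal{O}_X(pd))$ with $p\mid s$, and taking the direct sum of $n!$ copies (direct sums of Ulrich bundles are trivially Ulrich) already gives a bundle of rank $s\cdot(n!)^2$ with $p\mid s$, proving the proposition. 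Your error was to conclude that "some extra factor of $n!$ must enter" intrinsically, and then to chase it through unfounded Veronese-pushforward bookkeeping instead of either that trivial fix or the paper's tensor construction.
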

\begin{proof}
	From Proposition 5.1 we get an Ulrich bundle $\mathcal{E}$ of rank $s\cdot n!$ for $(X,\mathcal{O}_X(p))$. Note that there exists a finite surjective projection $\pi\colon X\rightarrow \mathbb{P}^n$. According to \cite{BV}, Proposition 3.1 there is an Ulrich bundle $\mathcal{F}$ of rank $n!$ for $(\mathbb{P}^n,\mathcal{O}_{\mathbb{P}^n}(d))$. Now \cite{ESWV}, Proposition 5.4 states that $\mathcal{E}\otimes \pi^*\mathcal{F}$ is an Ulrich bundle for $(X,\mathcal{O}_X(pd))$. Its rank is $s\cdot (n!)^2$. The second statement follows as in Proposition 5.1
\end{proof}
\begin{prop}
	Let $X$ be a non-split Brauer--Severi variety of period $p$. There is no rank one Ulrich bundle for $(X,\mathcal{O}_X(pd))$ for all $d\geq 1$. 
\end{prop}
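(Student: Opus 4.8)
\emph{Proof proposal.} The strategy is to pass to a splitting field, reducing the question to line bundles on $\mathbb{P}^n$, and then to extract a divisibility obstruction from the period. Since $X$ is non-split we have $p\geq 2$, and since $\mathrm{Pic}(X)\cong\mathbb{Z}$ is generated by $\mathcal{O}_X(p)$, every line bundle on $X$ is of the form $\mathcal{O}_X(mp)$ for some $m\in\mathbb{Z}$. Suppose, for contradiction, that $\mathcal{L}=\mathcal{O}_X(mp)$ is a rank one Ulrich bundle for $(X,\mathcal{O}_X(pd))$ with $d\geq 1$. As $\mathcal{O}_X(p)$ descends $\mathcal{O}_{\mathbb{P}^n}(p)$ over a splitting field, we get $\bar{\mathcal{L}}\simeq\mathcal{O}_{\mathbb{P}^n}(mp)$ and $\mathcal{O}_X(pd)$ becomes $\mathcal{O}_{\mathbb{P}^n}(pd)$; by Proposition 3.3, $\mathcal{O}_{\mathbb{P}^n}(mp)$ is then an Ulrich bundle for $(\mathbb{P}^n,\mathcal{O}_{\mathbb{P}^n}(pd))$, where $n=\dim(X)$.

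Next I would unwind the cohomological condition over $\mathbb{P}^n$, which is entirely elementary. Put $e=pd$. Using that $h^q(\mathbb{P}^n,\mathcal{O}_{\mathbb{P}^n}(j))=0$ for all $q$ precisely when $-n\leq j\leq -1$, the Ulrich condition $h^q(\mathbb{P}^n,\mathcal{O}_{\mathbb{P}^n}(mp-ie))=0$ for all $q$ and $1\leq i\leq n$ becomes $-n\leq mp-ie\leq -1$ for every such $i$. Specializing to $i=1$ yields $mp\leq e-1$, and to $i=n$ yields $mp\geq n(e-1)$; combining these, $(n-1)(e-1)\leq 0$. Since $e=pd\geq 2$ we have $e-1>0$, so necessarily $n=1$. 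In that case the two inequalities force $mp=e-1=pd-1$, i.e. $p(m-d)=-1$, contradicting $p\geq 2$. In every case we reach a contradiction, so there is no rank one Ulrich bundle for $(X,\mathcal{O}_X(pd))$.

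I do not expect a genuine obstacle here: the computation is routine once the base change is in place. The only points worth stating carefully are the reduction to $\mathbb{P}^n$ (Proposition 3.3, together with the fact that $\mathcal{O}_X(mp)$ pulls back to $\mathcal{O}_{\mathbb{P}^n}(mp)$ by the very definition of the period) and the remark that the curve case $n=1$ is not subsumed by the inequality $(n-1)(e-1)\leq 0$ and must instead be dispatched by the divisibility argument $p\nmid 1$.
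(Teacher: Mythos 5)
Your proof is correct. It shares the paper's first step --- base change to a splitting field via Proposition 3.3 --- but then diverges: the paper quotes the classification of rank one Ulrich bundles on projective space from [LV, Proposition 3.1] (either $pd=1$, or $n=1$ and $\bar{\mathcal{E}}\simeq\mathcal{O}_{\mathbb{P}^1}(pd-1)$) and concludes by observing that $\mathcal{O}_{\mathbb{P}^1}(pd-1)$ cannot descend to a non-split $X$, whereas you invoke the structure of $\mathrm{Pic}(X)\cong\mathbb{Z}\cdot\mathcal{O}_X(p)$ at the outset, so that the candidate bundle is $\mathcal{O}_X(mp)$, and then verify by hand, using only $h^q(\mathbb{P}^n,\mathcal{O}(j))=0$ for all $q$ iff $-n\le j\le -1$, that the Ulrich conditions force $(n-1)(pd-1)\le 0$ and, in the residual case $n=1$, the impossible divisibility $p\mid 1$. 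In effect you reprove the line-bundle case of [LV, Proposition 3.1] inside the argument, which makes your proof self-contained and handles all dimensions uniformly through one inequality; the paper's version is shorter because it outsources exactly that computation to the reference and isolates the descent obstruction ($\mathcal{O}_{\mathbb{P}^1}(-1)$ would have to exist on $X$) as the final punchline. Both hinge on the same arithmetic fact that degrees of line bundles on $X$ are multiples of $p\ge 2$, so the routes are logically equivalent, just packaged in opposite order.
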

\begin{proof}
	Assume $\mathcal{E}$ is an Ulrich bundle for $(X,\mathcal{O}_X(pd))$. Proposition 3.3 states that $\bar{\mathcal{E}}:=\mathcal{E}\otimes_k \bar{k}$ must be an Ulrich bundle for $(\mathbb{P}^n,\mathcal{O}_{\mathbb{P}^n}(pd))$. Now \cite{LV}, Proposition 3.1 implies $pd=1$ or $n=1$ and $\bar{\mathcal{E}}\simeq \mathcal{O}_{\mathbb{P}^1}(pd-1)$. As $X$ is non-split, $pd=1$ is not possible. Moreover, the bundle $\mathcal{O}_{\mathbb{P}^1}(pd-1)$ does not descend to $X$. Indeed, if $\mathcal{O}_{\mathbb{P}^1}(pd-1)$ would descend, the bundle $\mathcal{O}_{\mathbb{P}^1}(pd-1)\otimes \mathcal{O}_{\mathbb{P}^1}(-pd)\simeq \mathcal{O}_{\mathbb{P}^1}(-1)$ would exist on $X$. Since $X$ is non-split, this is impossible.
\end{proof}
Note that a non-split Brauer--Severi curve $C$ has period two. Its corresponding central simple algebra is a quaternion algebra.
\begin{prop}
	Let $C$ be a non-split Brauer--Severi curve. Then for all $d\geq 1$, $\mathcal{E}$ is a rank $r$ Ulrich bundle for $(C,\mathcal{O}(2d))$ if and only if $r=2m$ and $\mathcal{E}\simeq \mathcal{V}^{\oplus m}_{2d-1}$.
\end{prop}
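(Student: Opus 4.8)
The plan is to push everything to $\bar k$ via Proposition 3.3 and then control the descent through the endomorphism algebra of $\mathcal{V}_{2d-1}$. As recalled just above the statement, a non-split Brauer--Severi curve $C$ has period $2$ and corresponds to a quaternion algebra $A$, which is moreover a division algebra since $C$ is non-split. Since $2d-1$ is odd, $A^{\otimes(2d-1)}$ is Brauer-equivalent to $A$, so from the description of the $\mathcal{V}_i$ given in the introduction we obtain $\mathrm{rk}(\mathcal{V}_{2d-1})=\mathrm{ind}(A)=2$ and $\mathrm{End}_C(\mathcal{V}_{2d-1})\simeq A$. For the ``if'' direction I would simply note that $\overline{\mathcal{V}_{2d-1}^{\oplus m}}\simeq\mathcal{O}_{\mathbb{P}^1}(2d-1)^{\oplus 2m}$, and that twisting by $\mathcal{O}_{\mathbb{P}^1}(-2d)$ yields $\mathcal{O}_{\mathbb{P}^1}(-1)^{\oplus 2m}$, which has vanishing cohomology; hence $\overline{\mathcal{V}_{2d-1}^{\oplus m}}$ is an Ulrich bundle for $(\mathbb{P}^1,\mathcal{O}_{\mathbb{P}^1}(2d))$, and Proposition 3.3 gives that $\mathcal{V}_{2d-1}^{\oplus m}$ is Ulrich for $(C,\mathcal{O}_C(2d))$, of rank $2m$.

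For the ``only if'' direction, let $\mathcal{E}$ be a rank $r$ Ulrich bundle for $(C,\mathcal{O}_C(2d))$. By Proposition 3.3, $\bar{\mathcal{E}}$ is Ulrich for $(\mathbb{P}^1,\mathcal{O}_{\mathbb{P}^1}(2d))$; writing $\bar{\mathcal{E}}=\bigoplus_j\mathcal{O}_{\mathbb{P}^1}(a_j)$ by Grothendieck's theorem, the vanishing of $h^0$ and $h^1$ of each $\mathcal{O}_{\mathbb{P}^1}(a_j-2d)$ forces $a_j=2d-1$ for all $j$, so $\bar{\mathcal{E}}\simeq\mathcal{O}_{\mathbb{P}^1}(2d-1)^{\oplus r}$. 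To see that $r$ is even, I would consider $N:=\mathrm{Hom}_C(\mathcal{V}_{2d-1},\mathcal{E})$, which is a right module over $\mathrm{End}_C(\mathcal{V}_{2d-1})\simeq A$ via precomposition. Since $\mathrm{Hom}$ of coherent sheaves on the proper $k$-scheme $C$ commutes with the flat extension $k\subset\bar k$, one has $N\otimes_k\bar k\simeq\mathrm{Hom}_{\mathbb{P}^1}(\mathcal{O}_{\mathbb{P}^1}(2d-1)^{\oplus 2},\mathcal{O}_{\mathbb{P}^1}(2d-1)^{\oplus r})\simeq\bar{k}^{2r}$, so $\dim_k N=2r$; as $A$ is a division algebra with $\dim_k A=4$, the module $N$ is free over $A$, whence $r=2m$ for some $m>0$. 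It then follows that $\bar{\mathcal{E}}\simeq\mathcal{O}_{\mathbb{P}^1}(2d-1)^{\oplus 2m}\simeq\overline{\mathcal{V}_{2d-1}^{\oplus m}}$, and Proposition 4.7 upgrades this to an isomorphism $\mathcal{E}\simeq\mathcal{V}_{2d-1}^{\oplus m}$ over $k$.

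The step carrying the real content is the parity argument: recognizing $N=\mathrm{Hom}_C(\mathcal{V}_{2d-1},\mathcal{E})$ as a module over the quaternion \emph{division} algebra $\mathrm{End}(\mathcal{V}_{2d-1})$ and invoking that modules over a division ring are free. Everything else is base-change bookkeeping together with the uniqueness-after-base-change statement of Proposition 4.7. While writing the argument I would double-check the compatibility $\mathrm{End}_C(\mathcal{V}_{2d-1})\otimes_k\bar k\simeq M_2(\bar k)$ with the identification $\mathrm{End}_C(\mathcal{V}_{2d-1})\simeq A$ (immediate from $A\otimes_k\bar k\simeq M_2(\bar k)$), and that the claimed base-change identity for the relevant $\mathrm{Hom}$ spaces is indeed valid for these vector bundles.
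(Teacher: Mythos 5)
Your proof is correct, and the overall skeleton (reduce to $\mathbb{P}^1$ via Proposition 3.3, classify Ulrich bundles for $(\mathbb{P}^1,\mathcal{O}(2d))$, descend the isomorphism via Proposition 4.7) matches the paper; the one step you handle genuinely differently is the parity of $r$. The paper rules out odd $r$ by a determinant argument: if $\mathcal{O}_{\mathbb{P}^1}(2d-1)^{\oplus r}$ descended to $C$, then so would its determinant $\mathcal{O}_{\mathbb{P}^1}(r(2d-1))$, which is impossible for odd $r$ because $\mathrm{Pic}(C)$ is generated by $\mathcal{O}_C(2)$ (equivalently it cites \cite{NV}). You instead exploit the noncommutative structure: $N=\mathrm{Hom}_C(\mathcal{V}_{2d-1},\mathcal{E})$ is a module over $\mathrm{End}_C(\mathcal{V}_{2d-1})$, which is the quaternion \emph{division} algebra since $2d-1$ is odd and $C$ is non-split, so $\dim_k N=2r$ (computed by flat base change) must be divisible by $4$, forcing $r$ even. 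Both arguments are sound and give the same conclusion here; the paper's is more elementary, needing only the structure of $\mathrm{Pic}(C)$, while yours is a nice illustration of how the endomorphism algebras $D_i$ of the bundles $\mathcal{V}_i$ constrain ranks of descended bundles, in the spirit of the divisibility questions raised later in the paper (though note that in higher dimension it would not apply as directly, since an Ulrich bundle on $\mathbb{P}^n$ for $n\ge 2$ need not be a sum of copies of one line bundle). A further minor difference: you re-derive the classification $\bar{\mathcal{E}}\simeq\mathcal{O}_{\mathbb{P}^1}(2d-1)^{\oplus r}$ directly from Grothendieck's splitting theorem, where the paper cites \cite{LV}, Corollary 4.5; either is fine.
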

\begin{proof}
	Let $D$ be the quaternion algebra associated to $C$. Then $\mathrm{rk}(\mathcal{V}_i)=\mathrm{ind}(D^{\otimes i})$. Since the index of $D$ is two, we obtain $\mathrm{ind}(D^{\otimes i})=2$ for $i$ odd and $\mathrm{ind}(D^{\otimes i})=1$ otherwise. Now let $\mathcal{E}$ be a rank $r$ Ulrich bundle for $(C,\mathcal{O}(2d))$. By Proposition 3.3, $\bar{\mathcal{E}}$ is an Ulrich bundle for $(\mathbb{P}^1,\mathcal{O}_{\mathbb{P}}(2d))$. According to \cite{LV}, Corollary 4.5, the bundle $\bar{\mathcal{E}}$ is Ulrich if and only if $\bar{\mathcal{E}}\simeq \mathcal{O}_{\mathbb{P}}(2d-1)^{\oplus r}$. If $r=2m$, we see that $\mathcal{V}^{\oplus m}_{2d-1}$ must be Ulrich, since it base changes to $\mathcal{O}_{\mathbb{P}}(2d-1)^{\oplus r}$. On the other hand, if $\mathcal{E}$ is Ulrich, we have $\bar{\mathcal{E}}\simeq \mathcal{O}_{\mathbb{P}}(2d-1)^{\oplus r}$ for a suitable $r$. Now see \cite{NV} to conclude that $\mathcal{O}_{\mathbb{P}}(2d-1)^{\oplus r}$ does not descend to $C$ for odd $r$. In fact, this follows also from taking the determinant $\mathrm{det}(\mathcal{O}_{\mathbb{P}}(2d-1)^{\oplus r})$. We have $\mathrm{det}(\mathcal{O}_{\mathbb{P}}(2d-1)^{\oplus r})\simeq \mathcal{O}_{\mathbb{P}}(r\cdot(2d-1))$. But if $r$ is odd, $\mathcal{O}_{\mathbb{P}}(r\cdot(2d-1))$ does not descend to $C$ since $\mathrm{Pic}(C)$ is generated by $\mathcal{O}_C(2)$. For even $r$, however, $\mathcal{O}_{\mathbb{P}}(2d-1)^{\oplus r}$ descends uniquely (up to isomorphism) to $\mathcal{V}^{\oplus r}_{2d-1}$.
\end{proof}
\begin{cor}
	Let $C$ be a non-split Brauer--Severi curve. Then the smallest possible rank of an Ulrich bundle for $(C,\mathcal{O}(2d))$ is two.
\end{cor}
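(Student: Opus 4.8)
The plan is to deduce this immediately from Proposition 5.4, which already gives a complete classification of the Ulrich bundles for $(C,\mathcal{O}(2d))$, together with Proposition 5.3. First I would recall that, by Proposition 5.4, any Ulrich bundle $\mathcal{E}$ for $(C,\mathcal{O}(2d))$ satisfies $\mathrm{rk}(\mathcal{E})=2m$ and $\mathcal{E}\simeq\mathcal{V}_{2d-1}^{\oplus m}$ for some $m\geq 1$. In particular every Ulrich rank is even, hence at least two; this is also compatible with Proposition 5.3, which independently rules out rank one. So the only remaining point is that the lower bound $2$ is actually attained.

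To see attainment, I would simply take $m=1$: the bundle $\mathcal{V}_{2d-1}$ is an Ulrich bundle for $(C,\mathcal{O}(2d))$ by Proposition 5.4. Its rank equals $\mathrm{ind}(D^{\otimes(2d-1)})$, where $D$ is the quaternion algebra attached to the non-split curve $C$. Since $D$ has period two, $D^{\otimes(2d-1)}$ is Brauer-equivalent to $D$ for the odd exponent $2d-1$, so $\mathrm{ind}(D^{\otimes(2d-1)})=\mathrm{ind}(D)=2$, and therefore $\mathrm{rk}(\mathcal{V}_{2d-1})=2$. Combining this with the previous paragraph shows that the minimal rank of an Ulrich bundle for $(C,\mathcal{O}(2d))$ is exactly two.

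I expect no real obstacle here: essentially all the content is already packaged in Proposition 5.4, and the corollary is bookkeeping. The only things to be careful about are confirming that $\mathcal{V}_{2d-1}$ genuinely has rank two (and not rank one, which would force $D$ to be split, contradicting non-splitness of $C$), and noting that $\mathcal{V}_{2d-1}$ is indecomposable so that it does not split off a rank-one Ulrich summand — but the indecomposability of the $\mathcal{V}_i$ recalled in the introduction, together with the evenness of all Ulrich ranks from Proposition 5.4, settles this point.
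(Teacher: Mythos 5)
Your argument is correct and is exactly the reasoning the paper intends: Corollary 5.5 is an immediate consequence of the classification in Proposition 5.4, with existence in rank two given by $\mathcal{V}_{2d-1}$, whose rank is $\mathrm{ind}(D^{\otimes(2d-1)})=2$ since $2d-1$ is odd and $D$ is a non-split quaternion algebra. The extra remarks about indecomposability are harmless but unnecessary, since Proposition 5.4 already forces every Ulrich rank to be even.
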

\begin{rema}
	\textnormal{Corollary 5.5 amounts to saying that any Brauer--Severi curve over an arbitrary field $k$ which admits an Ulrich line bundle must admit a $k$-rational point.}
\end{rema}
Let $C$ be a Brauer--Severi curve over $k$. Consider the symmetric power $S^m(C)$, i.e the quotient of $\prod^m_{i=1} C$ by the symmetric group $S_m$, and note that this variety is non-singular. Since $S^m(C)\otimes_k \bar{k}\simeq S^m(\mathbb{P}^1_{\bar{k}})\simeq \mathbb{P}^m_{\bar{k}}$, we conclude that $S^m(C)$ must be a Brauer--Severi variety. Note that $\mathrm{ind}(S^m(C))\leq 2$. 
\begin{prop}
	If $\mathrm{ind}(S^m(C))=1$ or $\mathrm{per}(S^m(C))=1$, there is always an Ulrich bundle of rank $m!$ for $(S^m(C),\mathcal{O}(d))$. If $\mathrm{per}(S^m(C))=2$, there is always an Ulrich bundle of rank $2^m\cdot m!$ for $(S^m(C),\mathcal{O}(2d))$.
\end{prop}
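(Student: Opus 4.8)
The plan is to treat the two cases separately. For the first, note that the period and the index of a central simple algebra share the same prime factors, so the hypothesis $\mathrm{ind}(S^m(C))=1$ is equivalent to $\mathrm{per}(S^m(C))=1$, and either one forces $S^m(C)$ to be split; thus $S^m(C)\simeq\mathbb{P}^m$ and \cite{BV}, Proposition 3.1 immediately yields an Ulrich bundle of rank $m!$ for $(S^m(C),\mathcal{O}(d))=(\mathbb{P}^m,\mathcal{O}_{\mathbb{P}^m}(d))$, valid for every $d\geq 1$.

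For the second case, $\mathrm{per}(S^m(C))=2$, I would argue through the natural quotient morphism $q\colon C^m:=\prod_{i=1}^m C\to S^m(C)$. First observe that $C$ must itself be non-split (otherwise $S^m(C)=S^m(\mathbb{P}^1_k)=\mathbb{P}^m$ would be split), so $C$ is a non-split Brauer--Severi curve and, by Proposition 5.4, the rank-two bundle $\mathcal{V}_{2d-1}$ on $C$ is an Ulrich bundle for $(C,\mathcal{O}_C(2d))$. Next I would produce an Ulrich bundle on $C^m$ by iterating Lemma 3.1: starting from $\mathcal{V}_{2d-1}$ on each factor (rank $2$, with $\dim C=1$), one obtains inductively a bundle $\mathcal{F}_m$ on $C^m$ of rank $2^m$ which is Ulrich for $(C^m,\mathcal{O}_C(2d)^{\boxtimes m})$, the Serre twist at each step being taken with respect to $n=\dim C=1$. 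Finally, after base change to a splitting field $q$ becomes the quotient $(\mathbb{P}^1)^m\to S^m(\mathbb{P}^1)=\mathbb{P}^m$; since $C^m$ and $S^m(C)$ are both smooth, $q$ is finite and flat of degree $m!$, and since $q^*\mathcal{O}_{S^m(C)}(2d)$ and $\mathcal{O}_C(2d)^{\boxtimes m}$ both base change to $\mathcal{O}(2d,\dots,2d)$ on $(\mathbb{P}^1)^m$, Proposition 4.7 gives $q^*\mathcal{O}_{S^m(C)}(2d)\simeq\mathcal{O}_C(2d)^{\boxtimes m}$. Then Lemma 3.2 shows that $q_*\mathcal{F}_m$ is an Ulrich bundle for $(S^m(C),\mathcal{O}_{S^m(C)}(2d))$, and its rank is $m!\cdot 2^m$.

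The purely routine parts are the rank bookkeeping in the iteration of Lemma 3.1 and the identification $q^*\mathcal{O}_{S^m(C)}(2d)\simeq\mathcal{O}_C(2d)^{\boxtimes m}$, which is immediate from Proposition 4.7 once one recalls that $\bar q^*\mathcal{O}_{\mathbb{P}^m}(1)=\mathcal{O}(1,\dots,1)$ on $(\mathbb{P}^1)^m$. The step I expect to require the most care is confirming that $q$ is finite and flat of degree $m!$ — invoking miracle flatness together with the smoothness of $S^m(C)$ — which is exactly what guarantees that $q_*\mathcal{F}_m$ is a vector bundle of the claimed rank; beyond that, the argument is just an assembly of Lemmas 3.1 and 3.2, Propositions 4.7 and 5.4, and \cite{BV}, Proposition 3.1.
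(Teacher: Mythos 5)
Your proposal is correct and follows essentially the same route as the paper: split case via \cite{BV}, Proposition 3.1, and in the period-two case the quotient $\pi\colon C^m\to S^m(C)$, Proposition 5.4 plus iterated Lemma 3.1 to get a rank-$2^m$ Ulrich bundle on $C^m$ for $\pi^*\mathcal{O}(2d)$ (the paper records it, with the twists absorbed, as $\mathcal{V}_{2d-1}\boxtimes\mathcal{V}_{4d-1}\boxtimes\cdots\boxtimes\mathcal{V}_{2dm-1}$), and then Lemma 3.2 to push forward. Your extra attention to the identification $\pi^*\mathcal{O}(2d)\simeq\mathcal{O}_C(2d)^{\boxtimes m}$ via Proposition 4.7 and to the finiteness/flatness and degree $m!$ of $\pi$ only makes explicit what the paper leaves implicit.
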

\begin{proof}
	If $\mathrm{ind}(S^m(C))=1$ or $\mathrm{per}(S^m(C))=1$, the Brauer--Severi variety $S^m(C)$ is isomorphic to $\mathbb{P}^m$ and the assertion is the content of \cite{BV}, Proposition 3.1. So let us assume $\mathrm{per}(S^m(C))=2$. This means that $\mathcal{O}(2)$ is a generator of $\mathrm{Pic}(S^m(C))$. Consider the quotient map $\pi\colon C^m\rightarrow S^m(C)$. The pullback $\pi^*\mathcal{O}(2)$ is $\mathcal{O}_C(2)\boxtimes\cdots \boxtimes \mathcal{O}_C(2)$. Proposition 5.4 and Lemma 3.1 imply that the bundle $\mathcal{E}=\mathcal{V}_{2d-1}\boxtimes\cdots\boxtimes \mathcal{V}_{2dm-1}$ is an Ulrich bundle for $(C^m,\pi^*\mathcal{O}(2d))$. From Lemma 3.2 we conclude that $\pi_*\mathcal{E}$ is an Ulrich bundle for $(S^m(C),\mathcal{O}(2d))$. 
\end{proof}
To prove Theorem 5.10 below, we need the next two results.
\begin{thm}[\cite{ESWV}, Theorem 5.1]
	Let $\mathcal{E}$ be an vector bundle on $\mathbb{P}^n$. Then $\mathcal{E}$ is an Ulrich bundle for $(\mathbb{P}^n,\mathcal{O}_{\mathbb{P}^n}(d))$ if and only if 
	\begin{eqnarray*}
		h^q(\mathcal{E}\otimes\mathcal{O}_{\mathbb{P}^n}(i))\neq 0\Leftrightarrow \begin{cases}q=0 & \text{and} \;\;\;\;\; -d<i,\\
			0<q<n & \text{and} \;\;\;\;\; -(q+1)d<i< -qd,\\
			q=n & \text{and} \;\;\;\;\;\; i< -nd.
		\end{cases}
	\end{eqnarray*}
	All $h^q(\mathcal{E}(i))$ are determined by $\chi(\mathcal{E}(i))=\frac{\mathrm{rk}(\mathcal{E})}{n!}(i+d)\cdots (i+nd)$.
\end{thm}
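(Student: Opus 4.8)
The plan is to prove the two implications separately; the reverse one is formal and the forward one carries all the content.

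\emph{Reverse implication.} Suppose the nonvanishing locus of $i\mapsto h^q(\mathcal{E}(i))$ is as described. For each $q$, none of the integers $-d,-2d,\dots,-nd$ lies in that locus: for $q=0$ the locus is $\{i>-d\}$, for $0<q<n$ it is the open interval $\big(-(q+1)d,-qd\big)$, which contains no multiple of $d$, and for $q=n$ it is $\{i<-nd\}$. Hence $h^q(\mathcal{E}(-jd))=0$ for all $q$ and all $1\le j\le n$, i.e.\ $\mathcal{E}$ is Ulrich for $(\mathbb{P}^n,\mathcal{O}_{\mathbb{P}^n}(d))$.

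\emph{Forward implication.} Assume $\mathcal{E}$ is Ulrich. By Riemann--Roch, $\chi(\mathcal{E}(i))$ is a polynomial in $i$ of degree $n$ with leading coefficient $\mathrm{rk}(\mathcal{E})/n!$; the Ulrich vanishing gives $\chi(\mathcal{E}(-jd))=0$ for $1\le j\le n$, and these $n$ distinct roots force $\chi(\mathcal{E}(i))=\tfrac{\mathrm{rk}(\mathcal{E})}{n!}(i+d)(i+2d)\cdots(i+nd)$. Granting the claimed vanishing pattern, at most one $h^q(\mathcal{E}(i))$ is nonzero for a fixed $i$, so it equals $(-1)^q\chi(\mathcal{E}(i))$ or $0$; thus everything reduces to proving the pattern. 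Three parts of it are easy. First, $H^0(\mathcal{E}(i))=0$ for $i\le -d$: a nonzero section of $\mathcal{E}(i)$ multiplied by $x_0^{-d-i}$ gives a nonzero section of $\mathcal{E}(-d)$ since $\mathcal{E}$ is locally free, contradicting the Ulrich vanishing. Second, $\mathcal{E}^{D}:=\mathcal{E}^{\vee}\big((d-1)(n+1)\big)$ is again Ulrich for $(\mathbb{P}^n,\mathcal{O}_{\mathbb{P}^n}(d))$ --- Serre duality turns its Ulrich vanishing into that of $\mathcal{E}$ at the twists $d(j-n-1)$, $1\le j\le n$, i.e.\ at $-d,\dots,-nd$ --- and applying the previous point to $\mathcal{E}^{D}$ and feeding it back through Serre duality yields $H^n(\mathcal{E}(i))=0$ for $i\ge -nd$. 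Third, fix a finite surjective $\pi\colon\mathbb{P}^n\to\mathbb{P}^n$ with $\pi^*\mathcal{O}_{\mathbb{P}^n}(1)=\mathcal{O}_{\mathbb{P}^n}(d)$ (e.g.\ the $d$-th power map, or a general linear projection of the $d$-uple Veronese); by Lemma 3.2, $\mathcal{E}$ is Ulrich for $\mathcal{O}_{\mathbb{P}^n}(d)$ exactly when $\pi_*\mathcal{E}$ is Ulrich for $\mathcal{O}_{\mathbb{P}^n}(1)$, and an Ulrich bundle for $\mathcal{O}_{\mathbb{P}^n}(1)$ is trivial (this is the $d=1$ case of the statement, immediate from Beilinson's resolution, cf.\ Corollary 4.10 in the split case), so $\pi_*\mathcal{E}\cong\mathcal{O}_{\mathbb{P}^n}^{\oplus N}$ with $N=d^n\,\mathrm{rk}(\mathcal{E})$. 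In particular $H^q(\mathcal{E}(jd))=H^q(\mathbb{P}^n,\mathcal{O}(j)^{\oplus N})=0$ for $0<q<n$ and all $j$.

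\emph{The hard step.} What remains is the intermediate vanishing: for $0<q<n$, $H^q(\mathcal{E}(i))=0$ whenever $i\notin\big(-(q+1)d,-qd\big)$. The substitution $\mathcal{E}\leftrightarrow\mathcal{E}^{D}$, $q\leftrightarrow n-q$ exchanges the two half-lines $\{i\ge -qd\}$ and $\{i\le -(q+1)d\}$, so it suffices to prove $H^q(\mathcal{E}(i))=0$ for $i\ge -qd$ and $1\le q\le n$ (the case $q=n$ is done). Writing $i=jd+\rho$ with $0\le\rho\le d-1$ and using the projection formula, $H^q(\mathcal{E}(i))\cong H^q\big(\mathbb{P}^n,\mathcal{G}_\rho(j)\big)$ with $\mathcal{G}_\rho:=\pi_*(\mathcal{E}(\rho))$ (so $\mathcal{G}_0=\mathcal{O}^{\oplus N}$), reducing the question to the cohomology of the $d$ auxiliary bundles $\mathcal{G}_0,\dots,\mathcal{G}_{d-1}$ on the target. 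These are linked by short exact sequences $0\to\mathcal{G}_{\rho-1}\to\mathcal{G}_\rho\to\iota_*\big((\pi|_H)_*(\mathcal{E}|_H(\rho))\big)\to 0$, obtained by pushing forward the multiplication-by-$x_0$ sequence on the source, where $H\cong\mathbb{P}^{n-1}$ is a coordinate hyperplane and $\iota$ its inclusion; combining these with the Euler characteristic above, the boundary vanishing, Serre vanishing for $j\gg 0$, and the explicit (combinatorially described) splitting of $\pi_*\mathcal{O}_{\mathbb{P}^n}(\rho)$ into line bundles should pin down each $h^q(\mathcal{G}_\rho(j))$. I expect this to be the main obstacle: $\mathcal{E}|_H$ is no longer Ulrich, so the induction on $n$ does not propagate mechanically, and the $\mathcal{G}_\rho$ with $\rho\ne 0$ are genuinely less trivial than $\mathcal{O}^{\oplus N}$, which makes the bookkeeping delicate. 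The cleaner route --- the one taken in \cite{ESWV} --- bypasses all this by working with exterior-algebra methods: the Tate resolution attached to $\mathcal{E}$ is linear precisely when $\mathcal{E}$ is Ulrich, and the whole cohomology table (together with the formula for $\chi$) is read off from that linearity at once.
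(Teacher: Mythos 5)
There is a genuine gap, and you have in fact flagged it yourself. Everything up to the ``hard step'' is sound: the reverse implication, the computation of $\chi(\mathcal{E}(i))$ from Riemann--Roch plus the $n$ forced roots, the vanishing $H^0(\mathcal{E}(i))=0$ for $i\le -d$ by multiplying sections into the Ulrich range, the verification that $\mathcal{E}^{D}=\mathcal{E}^{\vee}\bigl((d-1)(n+1)\bigr)$ is again Ulrich and the resulting $H^n$-vanishing for $i\ge -nd$, and the triviality of $\pi_*\mathcal{E}$ giving the vanishing of intermediate cohomology at all twists that are multiples of $d$. But the actual content of the theorem is precisely the remaining claim that $H^q(\mathcal{E}(i))=0$ for $0<q<n$ and $i$ outside $\bigl(-(q+1)d,-qd\bigr)$ at the twists that are \emph{not} multiples of $d$; without it you also cannot conclude the nonvanishing half of the equivalence (which you correctly reduce to the vanishing pattern plus the sign of $\chi$). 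Your proposed route via $\mathcal{G}_{\rho}=\pi_*(\mathcal{E}(\rho))$ and the pushed-forward multiplication-by-$x_0$ sequences is only a plan, not an argument: as you note, $\mathcal{E}|_H$ is not Ulrich, so the induction does not close, and nothing in the sketch actually pins down $h^q(\mathcal{G}_{\rho}(j))$ for $0<\rho<d$. So the proof is incomplete exactly at its central point.

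For the comparison you were asked about: the paper does not prove this statement at all --- it is quoted verbatim as Theorem 5.8 from \cite{ESWV} (Theorem 5.1 there) and used as a black box in the proof of Theorem 5.10. So there is no in-paper argument for your attempt to diverge from; the relevant benchmark is the original proof in \cite{ESWV}, which, as you correctly observe in your last sentence, works with exterior-algebra/Tate-resolution methods, where linearity of the Tate resolution encodes the Ulrich condition and the whole cohomology table is read off at once. Reproducing that (or completing your pushforward approach) is what would be needed to turn your text into a proof; as it stands, citing \cite{ESWV} as the paper does is the honest option.
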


\begin{thm}[\cite{LV}, Theorem 4.1]
	If $\mathcal{E}$ is an Ulrich bundle for $(\mathbb{P}^n,\mathcal{O}_{\mathbb{P}^n}(d))$ and $1\leq j\leq n$, then
	\begin{eqnarray*}
		h^q(\mathcal{E}\otimes\mathcal{O}_{\mathbb{P}^n}(i)\otimes \Omega_{\mathbb{P}^n}^j\otimes \mathcal{O}_{\mathbb{P}^n}(j))\neq 0\Rightarrow \begin{cases}q=0 & \text{and} \;\;\;\;\; -d<i,\\
			0<q<n & \text{and} \;\;\;\;\; -(q+1)d<i\leq -qd,\\
			q=n & \text{and} \;\;\;\;\;\; i\leq -nd.
		\end{cases}
	\end{eqnarray*}
\end{thm}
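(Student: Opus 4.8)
The plan is to deduce the statement from Theorem 5.8 by resolving the twisted sheaf of differentials $\Omega^{j}_{\mathbb{P}^{n}}(j):=\Omega^{j}_{\mathbb{P}^{n}}\otimes\mathcal{O}_{\mathbb{P}^{n}}(j)$ in terms of line bundles and then pushing the cohomological constraints of Theorem 5.8 through the resulting long exact sequences. The point is that taking exterior powers of the Euler sequence produces \emph{two} linear resolutions of $\Omega^{j}_{\mathbb{P}^{n}}(j)$. Truncating the Koszul complex attached to the Euler map gives the exact sequence
\[
0\to\Omega^{j}_{\mathbb{P}^{n}}(j)\to\mathcal{O}_{\mathbb{P}^{n}}^{\oplus\binom{n+1}{j}}\to\mathcal{O}_{\mathbb{P}^{n}}(1)^{\oplus\binom{n+1}{j-1}}\to\cdots\to\mathcal{O}_{\mathbb{P}^{n}}(j-1)^{\oplus(n+1)}\to\mathcal{O}_{\mathbb{P}^{n}}(j)\to 0
\]
with only non-negative twists, while splicing the ``upper'' short exact sequences $0\to\Omega^{m+1}_{\mathbb{P}^{n}}(m+1)(-1)\to\mathcal{O}_{\mathbb{P}^{n}}(-1)^{\oplus\binom{n+1}{m+1}}\to\Omega^{m}_{\mathbb{P}^{n}}(m)\to 0$ coming from the same family gives a second resolution
\[
0\to\mathcal{O}_{\mathbb{P}^{n}}(j-n-1)\to\mathcal{O}_{\mathbb{P}^{n}}(j-n)^{\oplus(n+1)}\to\cdots\to\mathcal{O}_{\mathbb{P}^{n}}(-1)^{\oplus\binom{n+1}{j+1}}\to\Omega^{j}_{\mathbb{P}^{n}}(j)\to 0
\]
of length $n-j$, with only negative twists.

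Next I would tensor each of these exact sequences with the locally free sheaf $\mathcal{E}\otimes\mathcal{O}_{\mathbb{P}^{n}}(i)$ — exactness is preserved — split the result into short exact sequences, and chase cohomology. This yields two necessary conditions for $h^{q}(\mathcal{E}\otimes\Omega^{j}_{\mathbb{P}^{n}}(j)\otimes\mathcal{O}_{\mathbb{P}^{n}}(i))\neq 0$: from the first resolution, $h^{q-l}(\mathcal{E}(i+l))\neq 0$ for some $0\le l\le j$; from the second, $h^{q+l-1}(\mathcal{E}(i-l))\neq 0$ for some $1\le l\le n+1-j$. Now I would feed both into Theorem 5.8, which lists exactly the pairs $(q',i')$ with $h^{q'}(\mathcal{E}(i'))\neq 0$, and carry out the (elementary) bookkeeping. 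The first resolution disposes of $q=0$ at once, since only $l=0$ survives and then $-d<i$; and for $0<q\le n$ it forces the lower bound $-(q+1)d<i$, each relevant estimate reducing to $d\ge 1$. Dually, the second resolution disposes of $q=n$ at once, since only $l=1$ survives ($h^{>n}=0$ on $\mathbb{P}^{n}$), giving $i\le -nd$; and for $0\le q<n$ it forces the upper bound $i\le -qd$, again by estimates that come down to $d\ge 1$. Assembling the two, one recovers precisely the three alternatives in the statement.

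The part that is not purely formal is recognizing that one resolution does not suffice: the Euler/Koszul resolution with non-negative twists only ever yields the ``left'' bounds on $i$, and the dual resolution is genuinely needed for the ``right'' bounds — equivalently one may apply the first resolution to the Ulrich dual $\mathcal{E}^{\vee}\otimes\mathcal{O}_{\mathbb{P}^{n}}((n+1)(d-1))$, which is again an Ulrich bundle for $(\mathbb{P}^{n},\mathcal{O}_{\mathbb{P}^{n}}(d))$ by Serre duality together with Theorem 5.8, using $\wedge^{j}\mathcal{T}_{\mathbb{P}^{n}}\cong\Omega^{n-j}_{\mathbb{P}^{n}}(n+1)$. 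Once both families of constraints are in hand, the remainder is a short case analysis over the three ranges of $q$; I expect the only delicate point to be bookkeeping the strict versus non-strict inequalities — this is exactly where Theorem 5.8's ``$<$'' relaxes to the ``$\le$'' appearing at $q=n$ and, for $0<q<n$, at the upper end — all of which nonetheless follows mechanically from $d\ge 1$.
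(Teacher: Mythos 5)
The paper does not actually prove this statement: it is imported verbatim as Theorem 5.9 with a citation to \cite{LV}, Theorem 4.1, so there is no internal proof to compare against. Judged on its own, your argument is correct and is essentially the standard one (and close in spirit to Lin's): both spliced Euler-type resolutions of $\Omega^j_{\mathbb{P}^n}(j)$ that you write down are exact with the stated terms, tensoring with the locally free sheaf $\mathcal{E}(i)$ preserves exactness, and the two dévissage implications ($h^q\neq 0$ forces $h^{q-l}(\mathcal{E}(i+l))\neq 0$ for some $0\le l\le j$, respectively $h^{q+l-1}(\mathcal{E}(i-l))\neq 0$ for some $1\le l\le n+1-j$) combined with Theorem 5.8 do yield exactly the three alternatives; I checked the arithmetic and each estimate indeed reduces to $d\ge 1$, with the strict bounds of Theorem 5.8 relaxing to ``$\le$'' exactly where you say. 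Two side remarks you make are overstated but harmless: at $q=n$ the first resolution does not force the lower bound $-(n+1)d<i$ (the $l=0$ case lands in the $q'=n$ alternative and only gives $i<-nd$, so one gets a disjunction), and at $q=0$ the second resolution does not force $i\le 0$ (the $l=1$ term can land in the $q'=0$ alternative; e.g.\ $\mathcal{E}=\mathcal{O}_{\mathbb{P}^n}$, $d=1$, $j=1$, $i=1$ has $h^0(\Omega^1_{\mathbb{P}^n}(2))\neq 0$). Neither claim is needed, since the theorem asserts no lower bound at $q=n$ and no upper bound at $q=0$; also the second resolution has $n-j+1$ line-bundle terms rather than $n-j$, a triviality. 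Your observation that one resolution alone cannot give both bounds, and the alternative via the Ulrich dual $\mathcal{E}^{\vee}((n+1)(d-1))$, are both correct.
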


\begin{thm}
	Let $n\neq 1$ and $X$ a $n$-dimensional Brauer--Severi variety of period $p$. Set $d_n=\mathrm{rk}(\mathcal{V}_n)$. If $\mathcal{E}$ is an Ulrich bundle for $(X,\mathcal{O}_X(pd))$, then there is an exact sequence
	\begin{eqnarray}
		0\rightarrow \mathcal{V}^{\oplus a_{-n}}_{-n}\rightarrow \cdots \rightarrow \mathcal{V}^{\oplus a_{-n+1}}_{-n+1}\rightarrow \mathcal{V}^{\oplus a_{-1}}_{-1}\rightarrow \mathcal{E}\otimes\mathcal{O}_X(-pd)^{\oplus d^2_n}\rightarrow 0.
	\end{eqnarray}
\end{thm}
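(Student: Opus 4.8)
The plan is to feed the twisted bundle $\mathcal{F} := \mathcal{E}\otimes\mathcal{O}_X(-pd)$ into the Beilinson-type spectral sequence of Corollary 4.10. This spectral sequence has $E_1$-term $E^{p,q}_1 = \mathrm{Ext}^q(\wedge^{-p}\mathcal{T}_X\otimes\mathcal{V}_{n+p},\mathcal{F})\otimes\mathcal{V}_{n+p}$, converging to $\mathcal{F}$ in degree $0$ and to $0$ elsewhere, with $-n\le p\le 0$ and $0\le q\le n$. The key point is that almost all of the $E_1$-entries vanish, so that the spectral sequence degenerates into a single long exact sequence — a resolution of $\mathcal{F}^{\oplus(\text{multiplicity})}$ by the bundles $\mathcal{V}_{n+p}$, $p=0,\dots,-n$. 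After untwisting, $\mathcal{V}_{n+p}\otimes\mathcal{O}_X(-pd)$-type terms appear; one checks (using that $\mathrm{Pic}(X)=\mathbb{Z}\cdot\mathcal{O}_X(p)$ and reindexing) that these can be rewritten as the $\mathcal{V}_{-1},\mathcal{V}_{-2},\dots,\mathcal{V}_{-n}$ appearing in the statement, with the $d^2_n$ coming from the factor $\mathcal{V}_n\otimes\mathcal{V}_n^{\vee}$ inherent in how the collection interacts with the twist, since $d_n=\mathrm{rk}(\mathcal{V}_n)$ and $\mathrm{End}(\mathcal{V}_n)=D_n$ has dimension $d_n^2$.

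First I would reduce the computation of the $\mathrm{Ext}$-groups to the split case: by the conventions and Proposition 3.3 together with the base-change isomorphism $H^i(X,\mathcal{G})\otimes_k\bar k\simeq H^i(X\otimes_k\bar k,\mathcal{G}\otimes_k\bar k)$, we have $\mathrm{ext}^q_X(\wedge^{-p}\mathcal{T}_X\otimes\mathcal{V}_{n+p},\mathcal{F}) = \mathrm{ext}^q_{\mathbb{P}^n}(\wedge^{-p}\mathcal{T}_{\mathbb{P}^n}\otimes\mathcal{O}_{\mathbb{P}^n}(n+p)^{\oplus d_{n+p}},\bar{\mathcal{E}}(-pd))$ after base change to $\bar k$, where $\bar{\mathcal{E}}$ is an Ulrich bundle for $(\mathbb{P}^n,\mathcal{O}_{\mathbb{P}^n}(d))$ by Proposition 3.3. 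By Serre duality this is $h^{n-q}(\bar{\mathcal{E}}^{\vee}\otimes\Omega^{-p}_{\mathbb{P}^n}(-p)\otimes\mathcal{O}_{\mathbb{P}^n}(i))$ for the appropriate twist $i$; and since $\bar{\mathcal{E}}^{\vee}$ is again (a twist of) an Ulrich bundle, Theorem 5.9 (the $\Omega^j(j)$-vanishing result of \cite{LV}) pins down exactly which of these are nonzero. The upshot I expect is that for $q=n$ all entries vanish, for $q=0$ only the entry at $p=0$ survives and equals $\mathcal{F}^{\oplus d^2_n}$ (up to the bookkeeping above), while for the remaining $q$ the entry at a single value of $p$ survives — giving, after the standard argument that a spectral sequence with a single nonzero diagonal in each total degree collapses, the asserted complex. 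One then checks exactness at each spot: it is a complex by construction, and convergence to $\mathcal{F}^{\oplus d_n^2}$ in degree $0$ and to $0$ elsewhere forces it to be exact.

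The main obstacle will be the bookkeeping: making sure the twist indices line up so that the surviving $E_1$-terms are precisely $\mathcal{V}_{-1}^{\oplus a_{-1}},\dots,\mathcal{V}_{-n}^{\oplus a_{-n}}$ (rather than some other twists of the $\mathcal{V}_i$), and extracting the exact power $d_n^2$ on $\mathcal{E}\otimes\mathcal{O}_X(-pd)$. This requires care with Serre duality for the $\mathcal{V}_i$ on a Brauer--Severi variety (the canonical bundle is $\mathcal{O}_X(-(n+1))$, which is a power of $\mathcal{O}_X(p)$ precisely because $p\mid n+1$), with the identification $\mathcal{V}_i^{\vee}\simeq\mathcal{V}_{-i}$ up to the division-algebra multiplicities, and with the fact that $\wedge^{-p}\mathcal{T}_X\otimes\mathcal{V}_{n+p}$ was computed in Proposition 4.8 as the right dual $^{\vee}\mathcal{V}_{-p}$. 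Once the split-case cohomology vanishing from Theorems 5.8 and 5.9 is in hand, the descent back to $X$ is automatic by Propositions 4.6 and 4.7, since every sheaf in sight is defined over $k$ and its base change is determined. The hypothesis $n\neq 1$ is needed so that the range $-n\le p\le -1$ is non-empty and the resolution has the displayed shape.
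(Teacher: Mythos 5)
Your overall strategy (Corollary 4.10 plus the vanishing theorems 5.8/5.9 and descent via Propositions 4.6/4.7) is the right one and is indeed what the paper does, but there is a genuine gap in how you set it up: you feed $\mathcal{F}=\mathcal{E}\otimes\mathcal{O}_X(-pd)$ into the spectral sequence, whereas the argument only works if you take $\mathcal{F}=\mathcal{E}\otimes\mathcal{V}_n\otimes\mathcal{O}_X(-pd)$. The point is that $\mathrm{Ext}^q(\wedge^{-l}\mathcal{T}_X\otimes\mathcal{V}_{n+l},\mathcal{F})\simeq H^q(X,\mathcal{F}\otimes\Omega_X^{-l}\otimes\mathcal{V}_{-n-l})$, and after base change the $\mathcal{V}_{-n-l}$ contributes an extra twist by $-n-l$; with your choice of $\mathcal{F}$ the relevant groups become $H^q(\mathbb{P}^n,\bar{\mathcal{E}}(-pd-n)\otimes\Omega^{-l}_{\mathbb{P}^n}(-l))$, i.e.\ the twist is $-pd-n$, not $-pd$. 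For that twist Theorems 5.8 and 5.9 do not give the pattern you claim: the surviving row is some $q_0$ determined by where $-pd-n$ sits among the multiples of $pd$ (not $q=1$ in general), the column $l=0$ term $\mathrm{Ext}^{q_0}(\mathcal{V}_n,\mathcal{F})\otimes\mathcal{V}_n$ is typically nonzero (e.g.\ $h^1(\bar{\mathcal{E}}(-pd-n))\neq 0$ whenever $0<n<pd$), and your asserted picture "for $q=0$ only the entry at $p=0$ survives and equals $\mathcal{F}^{\oplus d_n^2}$, and for each remaining $q$ a single $p$ survives" is simply not what the computation gives (in fact $E_1^{0,0}=0$ here). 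Consequently the spectral sequence for your $\mathcal{F}$ yields a complex in the bundles $\mathcal{V}_0,\dots,\mathcal{V}_n$ whose cohomology is $\mathcal{F}$ in an interior spot, not the stated resolution.

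The second, related gap is the passage to the negative-index bundles and the multiplicity $d_n^2$. These cannot be obtained by "reindexing using $\mathrm{Pic}(X)=\mathbb{Z}\cdot\mathcal{O}_X(p)$": $\mathcal{V}_i$ is not a line-bundle twist of $\mathcal{V}_{i-n}$ (the ranks differ, and $\mathcal{O}_X(-n)$ need not even exist on $X$). In the paper they arise because the $\mathcal{V}_n$-twisted choice of $\mathcal{F}$ makes the effective twist exactly $-pd$, so everything concentrates in the single row $q=1$ with the $l=0$ entry vanishing; this produces a resolution $0\to\mathcal{V}_0^{\oplus b_0}\to\cdots\to\mathcal{V}_{n-1}^{\oplus b_{n-1}}\to\mathcal{E}\otimes\mathcal{V}_n\otimes\mathcal{O}_X(-pd)\to 0$, and one then tensors the whole sequence with $\mathcal{V}_{-n}\simeq\mathcal{V}_n^{\vee}$, using $\mathcal{V}_n\otimes\mathcal{V}_n^{\vee}\simeq\mathcal{O}_X^{\oplus d_n^2}$ and $\mathcal{V}_i\otimes\mathcal{V}_{-n}\simeq\mathcal{V}_{i-n}^{\oplus\ast}$ (both via Proposition 4.7). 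That final tensoring is exactly where both the $\mathcal{V}_{-1},\dots,\mathcal{V}_{-n}$ and the exponent $d_n^2$ come from, and it is only available because the $\mathcal{V}_n$ was built into $\mathcal{F}$ from the start. A minor further point: the hypothesis $n\neq 1$ is not about the range $-n\leq p\leq -1$ being nonempty (it is nonempty for $n=1$ too); it is needed so that in Theorem 5.9 the twist $i=-pd$ forces the intermediate case $q=1$ with $1<n$ and excludes the case $q=n$. Your argument becomes correct once you replace your $\mathcal{F}$ by $\mathcal{E}\otimes\mathcal{V}_n\otimes\mathcal{O}_X(-pd)$ and carry out the final tensoring by $\mathcal{V}_{-n}$ explicitly.
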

\begin{proof}
	By Corollary 4.10 there is a spectral sequence 
	\begin{eqnarray}
		E^{l,m}_1=\mathrm{Ext}^m(\wedge^{-l}\mathcal{T}_X\otimes \mathcal{V}_{n+l},\mathcal{F})\otimes \mathcal{V}_{n+l}\Rightarrow E^{l+m}=\begin{cases}\mathcal{F}& \text{if} \;\;\;\;\; l+m=0\\
			0& \text{otherwise}
		\end{cases}
	\end{eqnarray}
	where the grading is bounded by $0\leq m\leq n$ and $-n\leq l\leq 0$. Note that $\mathcal{V}^{\vee}_i\simeq \mathcal{V}_{-i}$ (see \cite{NV}, Proposition 5.4). Therefore, we have
	\begin{eqnarray*}
		\mathrm{Ext}^m(\wedge^{-l}\mathcal{T}_X\otimes \mathcal{V}_{n+l},\mathcal{F})\simeq H^m(X, \mathcal{F}\otimes \Omega_X^{-l}\otimes \mathcal{V}_{-n-l}).
	\end{eqnarray*}
	After base change to $\bar{k}$, we find
	\begin{eqnarray*}
		H^m(X, \mathcal{F}\otimes \Omega_X^{-l}\otimes \mathcal{V}_{-n-l})\otimes_k \bar{k}\simeq H^m(\mathbb{P}^{n},\bar{\mathcal{F}}\otimes \Omega^{-l}_{\mathbb{P}^{n}}(-n-l))^{\oplus \mathrm{rk}( \mathcal{V}_{-n-l})}.
	\end{eqnarray*}
	Note that 
	\begin{eqnarray*}
		H^m(\mathbb{P}^{n},\bar{\mathcal{F}}\otimes \Omega^{-l}_{\mathbb{P}^{n}}(-n-l))\simeq H^m(\mathbb{P}^{n},\bar{\mathcal{F}}(-n)\otimes \Omega^{-l}_{\mathbb{P}^{n}}(-l)). 
	\end{eqnarray*}
	Now consider the bundle $\mathcal{F}=\mathcal{E}\otimes\mathcal{V}_{n}\otimes\mathcal{O}_X(-pd)$ on $X$. Then
	\begin{eqnarray*}
		H^m(\mathbb{P}^{n},\bar{\mathcal{F}}(-n)\otimes \Omega^{-l}_{\mathbb{P}^{n}}(-l))\simeq H^m(\mathbb{P}^{n}, \bar{\mathcal{E}}\otimes\mathcal{O}_{\mathbb{P}^n}(-pd)\otimes\Omega^{-l}_{\mathbb{P}^{n}}(-l))^{\oplus d_n}
	\end{eqnarray*}
	For $n\neq 1$, Theorem 5.9 implies $h^m(\bar{\mathcal{E}}(-pd)\otimes \Omega^{-l}_{\mathbb{P}^{n}}(-l))=0$ for $m\neq 1$ and $-n\leq l\leq-1$. Furthermore, Theorem 5.8 implies $h^m(\bar{\mathcal{E}}(-pd))^{\oplus d_n}=0$ for $m\geq 0$. In particular, we have 
	\begin{eqnarray*}
		\mathrm{Ext}^m(\wedge^{-l}\mathcal{T}_X\otimes \mathcal{V}_{n+l},\mathcal{F})=0\;\; \text{for}\;\; m\neq 1\;\; \text{and}\;\;
		\mathrm{Ext}^1(\mathcal{V}_n,\mathcal{F})=0.
	\end{eqnarray*}
	By the properties of spectral sequence, we have $E_2^{l,1}=E_{\infty}^{l,1}=0$ for $l\neq -1$ and $E^{-1,1}_2=E^{-1,1}_{\infty}=\mathcal{F}$. This gives the following exact sequence
	\begin{eqnarray}
		0\longrightarrow \mathcal{V}_{0}^{\oplus b_0}\longrightarrow \mathcal{V}_{1}^{\oplus b_{1}}\longrightarrow \cdots \longrightarrow \mathcal{V}_{n-1}^{\oplus b_{n-1}}\longrightarrow \mathcal{E}\otimes\mathcal{V}_{n}\otimes\mathcal{O}_X(-pd)\longrightarrow 0.
	\end{eqnarray}
	Note that $(\mathcal{V}_{n}\otimes\mathcal{V}_{n}^{\vee})\otimes_k \bar{k}\simeq \mathcal{O}_{\mathbb{P}^n}^{\oplus d^2_{n}}$. From Proposition 4.7, we conclude $\mathcal{V}_{n}\otimes\mathcal{V}_{n}^{\vee}\simeq \mathcal{O}_{X}^{\oplus d^2_{n}}$. Tensoring the sequence (7) with $\mathcal{V}^{\vee}_{n}\simeq \mathcal{V}_{-n}$, we find
	\begin{eqnarray*}
		0\longrightarrow \mathcal{V}_{-n}^{\oplus a_{-n}}\longrightarrow \mathcal{V}_{-n+1}^{\oplus a_{-n+1}}\longrightarrow \cdots \longrightarrow \mathcal{V}_{-1}^{\oplus a_{-1}}\longrightarrow \mathcal{E}\otimes\mathcal{O}_X(-pd)^{\oplus d^2_{n}}\longrightarrow 0.
	\end{eqnarray*}
	Here $a_{-r}=d_n\cdot b_{n-r}$ for $1\leq r\leq n$ with $b_s=\mathrm{ext}^1((\wedge^{n-s}\mathcal{T}_X\otimes \mathcal{V}_{s},\mathcal{F}))$, $0\leq s\leq n$, and $\mathcal{F}=\mathcal{E}\otimes\mathcal{V}_{n}\otimes\mathcal{O}_X(-pd)$. This completes the proof.
\end{proof}
\begin{rema}
	\textnormal{Theorem 5.10 is a direct generalization of \cite{LV}, Theorem 4.3}
\end{rema}
\begin{thm}
	Let $X$ be a non-split Brauer--Severi variety of dimension $p-1$, where $p$ denotes an odd prime, and $\mathcal{E}$ an Ulrich bundle for $(X,\mathcal{O}_X(p))$. Then there are integers $c_{p-1-j}>0$, $j=1,...,p-2$, such that
	\begin{eqnarray*}
		\mathrm{rk}(\mathcal{E})\cdot ((p-1)!+\prod^{p-2}_{i=1}{(ip-1)})=(p\cdot(p-1)!)(\sum^{p-2}_{j=1}{(-1)^{j+1}c_{p-1-j}}).
	\end{eqnarray*}
	In particular, $p$ divides $\mathrm{rk}(\mathcal{E})\cdot ((p-1)!+\prod^{p-2}_{i=1}{(ip-1)})$.
\end{thm}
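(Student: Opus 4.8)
The plan is to feed $\mathcal{E}$ into Theorem 5.10, twist the resulting resolution back up to $\mathcal{O}_X$, and then extract two numerical identities: one from an alternating rank count, and one that pins down the left‑most Betti number via the cohomology vanishing of an Ulrich bundle.

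First I would record the arithmetic forced by the hypothesis. As $X$ is non-split of dimension $p-1$, its central simple algebra $A$ has degree $p$, and being non-split it is a division algebra, so $\mathrm{ind}(A)=p$; since $p$ is prime also $\mathrm{per}(A)=p$. Hence $d_i:=\mathrm{rk}(\mathcal{V}_i)=\mathrm{ind}(A^{\otimes i})$ equals $p$ whenever $p\nmid i$ (in particular $d_n=p$ for $n=p-1$) and equals $1$ when $p\mid i$, while $\mathrm{End}(\mathcal{V}_i)=D_i$ is a division algebra with $\dim_k D_i=p^2$, respectively $1$. By Proposition 3.3, $\bar{\mathcal{E}}$ is Ulrich for $(\mathbb{P}^{p-1},\mathcal{O}(p))$, so $\chi(\bar{\mathcal{E}}(t))=\tfrac{r}{(p-1)!}\prod_{k=1}^{p-1}(t+kp)$ with $r=\mathrm{rk}(\mathcal{E})$, and Theorems 5.8 and 5.9 record where $h^q(\bar{\mathcal{E}}(t))$ and $h^q(\bar{\mathcal{E}}(t)\otimes\Omega^j(j))$ are nonzero. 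Applying Theorem 5.10 with $d=1$ and twisting its sequence by $\mathcal{O}_X(p)$ — legitimate because $\mathcal{V}_{-j}(p)$ and $\mathcal{V}_{p-j}$ both base change to $\mathcal{O}_{\mathbb{P}^{p-1}}(p-j)^{\oplus p}$ and so are isomorphic by Proposition 4.7 — yields an exact sequence
\begin{eqnarray*}
0\to\mathcal{V}_1^{\oplus a_{-(p-1)}}\to\mathcal{V}_2^{\oplus a_{-(p-2)}}\to\cdots\to\mathcal{V}_{p-1}^{\oplus a_{-1}}\to\mathcal{E}^{\oplus p^2}\to0.
\end{eqnarray*}

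Now I would extract the two identities. Every $\mathcal{V}_i$ above has rank $p$ and $\mathcal{E}^{\oplus p^2}$ has rank $p^2r$, so the alternating rank count gives $\sum_{j=1}^{p-1}(-1)^{j-1}a_{-j}=pr$; since $p$ is odd, $a_{-(p-1)}$ enters with sign $-1$, whence $\sum_{j=1}^{p-2}(-1)^{j-1}a_{-j}=pr+a_{-(p-1)}$. To evaluate $a_{-(p-1)}$ I would twist the sequence once more by $\mathcal{O}_X(-p-1)$ and run the hypercohomology spectral sequence of the resulting resolution of $\mathcal{E}(-p-1)^{\oplus p^2}$: over $\bar k$, $\mathcal{V}_1(-p-1)$ becomes $\omega_{\mathbb{P}^{p-1}}^{\oplus p}$ (cohomology only in degree $p-1$, of dimension $p$), each $\mathcal{V}_i(-p-1)$ with $2\le i\le p-1$ becomes a sum of $\mathcal{O}(m)$ with $-p<m<0$ (acyclic), and $\mathcal{E}(-p-1)$ becomes $\bar{\mathcal{E}}(-p-1)$, whose cohomology — by Theorem 5.8 and $p\ge3$ — lives only in degree $1$ with $h^1(\bar{\mathcal{E}}(-p-1))=-\chi(\bar{\mathcal{E}}(-p-1))=\tfrac{r}{(p-1)!}\prod_{i=1}^{p-2}(ip-1)$. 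Only a single $E_1$‑term survives, so comparing the two ways of computing $\dim_k H^1(\mathcal{E}(-p-1)^{\oplus p^2})$ gives $p\,a_{-(p-1)}=p^2\,h^1(\bar{\mathcal{E}}(-p-1))$, i.e. $a_{-(p-1)}=\tfrac{pr}{(p-1)!}\prod_{i=1}^{p-2}(ip-1)$. Substituting this into the rank identity yields
\begin{eqnarray*}
r\Big((p-1)!+\prod_{i=1}^{p-2}(ip-1)\Big)=\frac{(p-1)!}{p}\sum_{j=1}^{p-2}(-1)^{j+1}a_{-j}.
\end{eqnarray*}

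To finish I would show $c_{p-1-j}:=a_{-j}/p^2$ is a positive integer for $1\le j\le p-2$; then the last display is exactly the asserted formula and its right‑hand side is visibly a multiple of $p$. For the divisibility one traces the construction: the displayed sequence comes from the Beilinson‑type complex (7) in the proof of Theorem 5.10 by tensoring with $\mathcal{V}_{p-1}^{\vee}\simeq\mathcal{V}_{-(p-1)}$, and in (7) the multiplicity of the $\mathcal{V}_s$‑term is the dimension of $\mathrm{Ext}^1(\wedge^{p-1-s}\mathcal{T}_X\otimes\mathcal{V}_s,\ \mathcal{E}\otimes\mathcal{V}_{p-1}(-p))$ over $\mathrm{End}(\wedge^{p-1-s}\mathcal{T}_X\otimes\mathcal{V}_s)$ — for $1\le s\le p-2$ this endomorphism ring is the degree‑$p$ division algebra $D_s$, and rewriting the $\mathrm{Ext}$ group via $\mathcal{V}_s\otimes\mathcal{V}_{-(p-1)}\simeq\mathcal{V}_{s-(p-1)}^{\oplus p}$ exhibits its $k$‑dimension as $p$ times that of an $\mathrm{Ext}^1$ which is a module over a degree‑$p$ division algebra, hence divisible by $p^3$; dividing by $\dim_k D_s=p^2$ and multiplying by the tensoring factor $d_sd_{p-1}/d_{s-(p-1)}=p$ gives $p^2\mid a_{-j}$. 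Positivity of $c_{p-1-j}$ is the non‑vanishing of this $\mathrm{Ext}^1$, which after base change and Theorem 5.9 reduces to $\chi\big(\bar{\mathcal{E}}(-p)\otimes\Omega^{\,p-1-j}_{\mathbb{P}^{p-1}}(p-1-j)\big)<0$, checked by running the Koszul filtration of $\Omega^{\bullet}(\bullet)$ against the $\chi$‑formula. I expect the main obstacle to be precisely this last step together with the evaluation of $a_{-(p-1)}$: one must invoke Theorem 5.8 to replace $h^1$ by $-\chi$ and then the polynomial $\chi$‑formula to produce $\prod_{i=1}^{p-2}(ip-1)$, all while keeping track of the several powers of $p$ — in particular that the multiplicities in the Beilinson complex are dimensions over the (degree‑$p$ division) endomorphism algebras of the twisted tangent bundles rather than $k$‑dimensions, which is what creates the factor $p^2$ in $c_{p-1-j}=a_{-j}/p^2$ and hence the divisibility by $p$.
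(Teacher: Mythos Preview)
Your strategy is the paper's: apply Theorem~5.10, take an alternating rank count, and compute the leftmost Betti number via Theorem~5.8 so that $\prod_{i=1}^{p-2}(ip-1)$ appears. The argument goes through, but several steps are packaged more laboriously than necessary, and one detail is sloppy.

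First, you twist the sequence of Theorem~5.10 by $\mathcal{O}_X(p)$ to land on $\mathcal{E}^{\oplus p^2}$, and later ``twist once more by $\mathcal{O}_X(-p-1)$''. The latter line bundle does not exist on $X$ (the period is $p$); what you actually mean is to base change to $\bar k$ first and then twist by $\mathcal{O}_{\mathbb{P}^{p-1}}(-p-1)$. The paper avoids this entirely by working with the untwisted sequence (7), $0\to\mathcal{V}_0^{\oplus b_0}\to\cdots\to\mathcal{V}_{p-2}^{\oplus b_{p-2}}\to\mathcal{E}\otimes\mathcal{V}_{p-1}\otimes\mathcal{O}_X(-p)\to0$, where the rightmost term has rank $p\cdot\mathrm{rk}(\mathcal{E})$ rather than $p^2\cdot\mathrm{rk}(\mathcal{E})$. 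This cuts one power of $p$ from all the bookkeeping.

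Second, your computation of $a_{-(p-1)}$ by a hypercohomology spectral sequence is correct but circuitous. In the paper the corresponding number $b_0$ is by definition $\mathrm{ext}^1(\wedge^{p-1}\mathcal{T}_X,\mathcal{F})$ with $\mathcal{F}=\mathcal{E}\otimes\mathcal{V}_{p-1}\otimes\mathcal{O}_X(-p)$; base changing immediately gives $b_0=p\cdot h^1(\bar{\mathcal{E}}(-p-1))$, and then Theorem~5.8 turns $h^1$ into $-\chi$. No spectral sequence is needed.

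Third, your argument for $p^2\mid a_{-j}$ through the division-algebra structure of $\mathrm{End}(\mathcal{V}_s)$ is over-engineered. In the paper one simply observes that each $b_s$ is the $k$-dimension of an $\mathrm{Ext}^1$ group whose base change carries an overall direct-sum factor $\mathrm{rk}(\mathcal{V}_{p-1})=p$ coming from $\mathcal{F}$; hence $b_s=p\cdot c_s$ with $c_s$ a cohomology dimension. Combined with $\mathrm{rk}(\mathcal{V}_{p-1-j})=p$ for $1\le j\le p-2$, this already produces the factor $p\cdot(p-1)!$ on the right-hand side of the identity, without analysing module structures over $D_s$.

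In short: the route is the paper's, and your derivation of the displayed identity is correct, but you introduce extra twists and an auxiliary spectral sequence that the paper does not need; working with sequence (7) and reading off the $b_s$ as $\mathrm{Ext}$-dimensions gives a cleaner and shorter argument.
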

\begin{proof}
	Let $\mathcal{E}$ be an Ulrich bundle for $(X,\mathcal{O}_X(p))$ . Theorem 5.10 above gives us an exact sequence
	\begin{eqnarray*}
		0\rightarrow \mathcal{V}^{\oplus b_{0}}_{0}\rightarrow \mathcal{V}^{\oplus b_{1}}_{1}\rightarrow\cdots \rightarrow \mathcal{V}^{\oplus b_{p-2}}_{p-2}\rightarrow \mathcal{E}\otimes\mathcal{V}_{p-1}\otimes \mathcal{O}_X(-p)\rightarrow 0.
	\end{eqnarray*}
	One can show $b_i=p\cdot c_i$ for suitable positive integer $c_i>0$. In fact, the proof of Theorem 5.10 shows $b_0=\mathrm{ext}^1(\wedge^{p-1}\mathcal{T}_X,\mathcal{F})$ for $\mathcal{F}=\mathcal{E}\otimes \mathcal{V}_{p-1}\otimes\mathcal{O}_X(p)$. After base change to $\bar{k}$ we obtain $\mathrm{Ext}^1(\wedge^{p-1}\mathcal{T}_X,\mathcal{F})\otimes_k\bar{k}\simeq H^1(\mathbb{P}^{p-1},\bar{\mathcal{E}}(-p-1))^{\oplus \mathrm{rk}(\mathcal{V}_{p-1})}$. Recall that $\mathrm{rk}(\mathcal{V}_l)=\mathrm{ind}(A^{\otimes l})$ (see \cite{NV}, Corollary 6.4). Since the period equals index, \cite{NV}, Proposition 6.8 implies $\mathrm{ind}(A^{\otimes l})=p/(p,l)$, where $(p,l)$ denotes the greatest common divisior of $p$ and $l$. From \cite{NV}, Proposition 5.4 we conclude $\mathcal{V}_l^{\vee}\simeq \mathcal{V}_{-l}$. Hence $\mathrm{rk}(\mathcal{V}_l)=\mathrm{rk}(\mathcal{V}_{-l})$. Since $1\leq l\leq p-1$, we get $\mathrm{rk}(\mathcal{V}_l)=p$. Therefore, $b_0=h^1(\mathbb{P}^{p-1},\bar{\mathcal{E}}(-p-1))^{\oplus p}$. So if we set $c_0:=h^1(\bar{\mathcal{E}}(-p-1))$, we get $b_0=p\cdot c_0$. In the same way one shows $b_i=p\cdot c_i$, $i=1,...,p-1$. 
	Since $p$ is an odd prime, we get
	\begin{eqnarray*}
		\mathrm{rk}(\mathcal{E})\cdot p &= &\sum^{p-1}_{j=1}(-1)^{j+1}\mathrm{rk}(\mathcal{V}_{p-1-j})\cdot b_{p-1-j}\\
		&=& -p\cdot c_0+\sum^{p-2}_{j=1}(-1)^{j+1}\mathrm{rk}(\mathcal{V}_{p-1-j})\cdot b_{p-1-j}\\
		&=&  -p\cdot c_0+\sum^{p-2}_{j=1}(-1)^{j+1}\mathrm{rk}(\mathcal{V}_{p-1-j})\cdot p\cdot c_{p-1-j}.
	\end{eqnarray*}
	Now we divide by $p$ and get 
	\begin{eqnarray*}
		\mathrm{rk}(\mathcal{E})+c_0=\sum^{p-2}_{j=1}(-1)^{j+1}\mathrm{rk}(\mathcal{V}_{p-1-j})\cdot c_{p-1-j}.
	\end{eqnarray*}
	Since $\mathrm{rk}(\mathcal{V}_{p-1-j})=p$ for $j=1,...,p-1$, we have  
	\begin{eqnarray*}
		\mathrm{rk}(\mathcal{E})+c_0=p\cdot(\sum^{p-2}_{j=1}(-1)^{j+1}c_{p-1-j}).
	\end{eqnarray*}
	Note that $c_0=h^1(\bar{\mathcal{E}}(-p-1))$. With the help of Theorem 5.8 we calculate $\chi(\bar{\mathcal{E}}(-p-1))=-h^1(\bar{\mathcal{E}}(-p-1))$. This gives 
	\begin{eqnarray*}
		\chi(\bar{\mathcal{E}}(-p-1))=-h^1(\bar{\mathcal{E}}(-p-1))=-c_0=\frac{\mathrm{rk}(\mathcal{E})}{(p-1)!}(-p-1+p)\cdots (-p-1+(p-1)p)
	\end{eqnarray*}
	and therefore
	\begin{eqnarray*}
		\mathrm{rk}(\mathcal{E})+c_0&=&\mathrm{rk}(\mathcal{E})+\frac{\mathrm{rk}(\mathcal{E})}{(p-1)!}(p-1)\cdots ((p-2)p-1)\\
		&=&p\cdot(\sum^{p-2}_{i=1}(-1)^{j+1}c_{p-1-j}).
	\end{eqnarray*}
	The equality 
	\begin{eqnarray*}
		\mathrm{rk}(\mathcal{E})+\frac{\mathrm{rk}(\mathcal{E})}{(p-1)!}(p-1)\cdots ((p-2)p-1)
		=p\cdot(\sum^{p-2}_{i=1}(-1)^{j+1}c_{p-1-j})
	\end{eqnarray*}
	gives the desired formula. This completes the proof.
\end{proof}
\noindent 
We see that if $p$ does not divide $((p-1)!+\prod^{p-2}_{i=1}{(ip-1)})$, then it must divide the rank of $\mathcal{E}$. 
\begin{cor}
	Let $X$ be a non-split Brauer--Severi variety as in Theorem 5.12. Then the rank of any Ulrich bundle for $(X,\mathcal{O}_X(p))$ must be a multiple of $p$. 
\end{cor}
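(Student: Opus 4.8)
The plan is to deduce this directly from Theorem 5.12 by an elementary congruence argument. Theorem 5.12 tells us that
\begin{eqnarray*}
	\mathrm{rk}(\mathcal{E})\cdot \Big((p-1)!+\prod^{p-2}_{i=1}{(ip-1)}\Big)=(p\cdot(p-1)!)\Big(\sum^{p-2}_{j=1}{(-1)^{j+1}c_{p-1-j}}\Big),
\end{eqnarray*}
so in particular $p$ divides the left-hand side. Since $p$ is prime, it suffices to show that $p$ does \emph{not} divide the integer $N:=(p-1)!+\prod^{p-2}_{i=1}{(ip-1)}$; then $p\mid \mathrm{rk}(\mathcal{E})$ follows immediately from primality.

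To evaluate $N$ modulo $p$, I would first invoke Wilson's theorem, which gives $(p-1)!\equiv -1\pmod{p}$. Next, for the product, observe that each factor satisfies $ip-1\equiv -1\pmod{p}$, and there are exactly $p-2$ factors, so $\prod^{p-2}_{i=1}(ip-1)\equiv (-1)^{p-2}\pmod{p}$. Because $p$ is an odd prime, $p-2$ is odd, hence $(-1)^{p-2}=-1$. Combining the two congruences yields $N\equiv -1+(-1)=-2\pmod{p}$.

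Finally, since $p$ is an odd prime it does not divide $2$, so $N\not\equiv 0\pmod{p}$. As $p$ divides $\mathrm{rk}(\mathcal{E})\cdot N$ and $p$ is prime with $p\nmid N$, we conclude $p\mid \mathrm{rk}(\mathcal{E})$, which is the assertion. There is no real obstacle here: the only subtlety is that we must use oddness of $p$ twice — once to get $(-1)^{p-2}=-1$ in the product, and once to ensure $p\nmid 2$ — and both are part of the hypothesis. One could also phrase the last step by noting that the right-hand side of the displayed identity is a multiple of $p$ while $(p-1)!$ is coprime to $p$, forcing $p\mid \mathrm{rk}(\mathcal{E})(1+\prod^{p-2}_{i=1}(ip-1)/(p-1)!\cdot(p-1)!)$, but the congruence computation above is cleaner and self-contained.
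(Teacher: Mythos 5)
Your proof is correct and follows the same route as the paper: deduce from the identity in Theorem 5.12 that $p$ divides $\mathrm{rk}(\mathcal{E})\cdot\bigl((p-1)!+\prod_{i=1}^{p-2}(ip-1)\bigr)$ and then rule out $p$ dividing the second factor via Wilson's theorem. The only difference is that the paper leaves the Wilson's-theorem step as an assertion, whereas you carry out the congruence explicitly ($N\equiv -1+(-1)^{p-2}\equiv -2\pmod p$, nonzero since $p$ is odd), which is exactly the intended computation.
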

\begin{proof}
	Let $\mathcal{E}$ be an Ulrich bundle for $(X,\mathcal{O}_X(p))$. Theorem 5.12 states that there are integers $c_{p-1-j}>0$, $j=1,...,p-2$, such that
	\begin{eqnarray*}
		\mathrm{rk}(\mathcal{E})\cdot ((p-1)!+\prod^{p-2}_{i=1}{(ip-1)})=(p\cdot(p-1)!)(\sum^{p-2}_{j=1}{(-1)^{j+1}c_{p-1-j}}).
	\end{eqnarray*}
	In particular, $p$ divides $\mathrm{rk}(\mathcal{E})\cdot ((p-1)!+\prod^{p-2}_{i=1}{(ip-1)})$. Using Wilson's theorem, one can show that $p$ does not divide $((p-1)!+\prod^{p-2}_{i=1}{(ip-1)})$. Therefore, $p$ must divide $\mathrm{rk}(\mathcal{E})$ and hence any Ulrich bundle for $(X,\mathcal{O}_X(p))$ must have rank a multiple of $p$. 
\end{proof}
\begin{rema}
	\textnormal{Propositions 5.1 and 5.2 state that the the rank of the obtained Ulrich bundle is divisible by the period. Now Corollary 5.13 shows that this the case for any Ulrich bundle, i.e that the rank of any Ulrich bundle on a Brauer--Severi variety $X$ is divided by the period, at least if $\mathrm{dim}(X)=p-1$ for an odd prime $p$. We wonder whether this is true in general.}
\end{rema}

\begin{thm}
	Let $X$ be a Brauer--Severi variety of dimension $3$ where $\mathcal{O}_X(2)$ exists. Suppose there is a smooth geometrically connected genus one curve $C$ on $X$ such that the restriction map $H^0(X,\mathcal{O}_X(2))\rightarrow H^0(C,\mathcal{O}_C(2))$ is bijective. Then there is an  unique Ulrich bundle of rank two for $(X,\mathcal{O}_X(2))$.  
\end{thm}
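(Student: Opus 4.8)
The plan is to construct the bundle by a Hartshorne--Serre construction directly on $X$, to verify the Ulrich property after base change, and then to address uniqueness. Since $\mathcal{O}_X(2)$ exists, $\mathrm{per}(X)$ divides $2$; in particular $\mathcal{O}_X(4)=\mathcal{O}_X(2)^{\otimes 2}$ and $\omega_X=\mathcal{O}_X(-4)$ exist on $X$ (for the latter use Proposition 4.7, as it base changes to $\omega_{\mathbb{P}^3}$). Put $\bar C=C\otimes_k\bar k\subset\bar X=\mathbb{P}^3$. As $C$ is geometrically connected of genus one, the hypothesis forces $h^0(\bar C,\mathcal{O}_{\bar C}(2))=h^0(\mathbb{P}^3,\mathcal{O}(2))=10$, and Riemann--Roch on a genus one curve then gives $\deg\mathcal{O}_{\bar C}(1)=5$: geometrically $C$ is an elliptic quintic. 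By adjunction $\det N_{\bar C/\mathbb{P}^3}=\omega_{\bar C}\otimes\omega_{\mathbb{P}^3}^{-1}|_{\bar C}=\mathcal{O}_{\bar C}(4)$, and since both sides descend to $C$, Proposition 4.7 yields $\det N_{C/X}\simeq\mathcal{O}_X(4)|_C$ --- exactly the numerical input needed for the Hartshorne--Serre correspondence over $k$.

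\textbf{Existence.} I would apply the generalized Hartshorne--Serre correspondence of \cite{BAV} to the local complete intersection curve $C\subset X$ with the line bundle $\mathcal{O}_X(4)$. Since $\omega_X\otimes\mathcal{O}_X(4)=\mathcal{O}_X$, the compatibility condition reads $\omega_C\simeq\mathcal{O}_C$, which holds because $C$ has genus one; hence there is a rank two bundle $\mathcal{E}$ on $X$ with $\det\mathcal{E}=\mathcal{O}_X(4)$ fitting into
\begin{eqnarray*}
0\rightarrow\mathcal{O}_X\rightarrow\mathcal{E}\rightarrow\mathcal{I}_{C/X}\otimes\mathcal{O}_X(4)\rightarrow 0.
\end{eqnarray*}
A local-to-global computation using $\mathcal{E}xt^1(\mathcal{I}_{C/X}\otimes\mathcal{O}_X(4),\mathcal{O}_X)\simeq\omega_C\otimes\omega_X^{-1}|_C\otimes\mathcal{O}_X(-4)|_C\simeq\mathcal{O}_C$, together with $H^1(X,\mathcal{O}_X(-4))=H^2(X,\mathcal{O}_X(-4))=0$ (which hold on $\mathbb{P}^3$) and $H^0(C,\mathcal{O}_C)=k$ (as $C$ is geometrically connected and reduced), shows $\mathrm{Ext}^1(\mathcal{I}_{C/X}\otimes\mathcal{O}_X(4),\mathcal{O}_X)\simeq k$. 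Thus the non-split extension is unique up to isomorphism, the Cayley--Bacharach condition is automatic, and $\mathcal{E}$ is locally free; this is the candidate bundle.

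\textbf{The Ulrich property.} By Proposition 3.3 it suffices to show $\bar{\mathcal{E}}:=\mathcal{E}\otimes_k\bar k$ is Ulrich for $(\mathbb{P}^3,\mathcal{O}_{\mathbb{P}^3}(2))$, i.e. $h^q(\bar{\mathcal{E}}(-2))=h^q(\bar{\mathcal{E}}(-4))=h^q(\bar{\mathcal{E}}(-6))=0$ for all $q$. Twisting $0\to\mathcal{O}_{\mathbb{P}^3}\to\bar{\mathcal{E}}\to\mathcal{I}_{\bar C}(4)\to 0$ by $\mathcal{O}(-2),\mathcal{O}(-4),\mathcal{O}(-6)$ and feeding in the ideal sheaf sequences $0\to\mathcal{I}_{\bar C}(m)\to\mathcal{O}(m)\to\mathcal{O}_{\bar C}(m)\to 0$, the cohomology of $\mathcal{O}_{\mathbb{P}^3}(m)$, and Riemann--Roch on the elliptic quintic $\bar C$, the verification reduces to two assertions: that $\mathcal{I}_{\bar C}(2)$ is acyclic, and that the connecting homomorphisms $H^2(\mathcal{I}_{\bar C})\to H^3(\mathcal{O}(-4))$ and $H^2(\mathcal{I}_{\bar C}(-2))\to H^3(\mathcal{O}(-6))$ are isomorphisms. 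Using Serre duality on $\mathbb{P}^3$ and on $\bar C$, and the explicit extension class defining $\mathcal{E}$, the acyclicity and the second connecting map amount to bijectivity of $H^0(\mathbb{P}^3,\mathcal{O}(2))\to H^0(\bar C,\mathcal{O}_{\bar C}(2))$, which is the hypothesis, while the first connecting map amounts to bijectivity of $H^0(\mathbb{P}^3,\mathcal{O})\to H^0(\bar C,\mathcal{O}_{\bar C})$, which holds since $\bar C$ is connected and reduced. Hence $\bar{\mathcal{E}}$, and therefore $\mathcal{E}$, is Ulrich.

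\textbf{Uniqueness, and the main obstacle.} Let $\mathcal{E}'$ be any rank two Ulrich bundle for $(X,\mathcal{O}_X(2))$. By Proposition 3.3 and Theorem 5.8 its Hilbert polynomial (after base change) is $\tfrac{1}{3}(i+2)(i+4)(i+6)$, so $\det\bar{\mathcal{E}'}=\mathcal{O}(4)$, hence $\det\mathcal{E}'=\mathcal{O}_X(4)$ by Proposition 4.7, and $h^0(\bar{\mathcal{E}'})=16$. By Proposition 4.7 it is enough to show $\bar{\mathcal{E}'}\simeq\bar{\mathcal{E}}$. A general section of $\bar{\mathcal{E}'}$ vanishes along a smooth curve $\bar C'$ which, by adjunction, is an elliptic quintic, and $h^0(\bar{\mathcal{E}'}(-2))=0$ (Ulrich) forces $h^0(\mathbb{P}^3,\mathcal{I}_{\bar C'}(2))=0$, so $\bar C'$ again satisfies the normality property above; thus $\bar{\mathcal{E}'}$ is the Hartshorne--Serre bundle of $\bar C'$. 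The remaining comparison $\bar{\mathcal{E}}\simeq\bar{\mathcal{E}'}$ is, I expect, the main difficulty: $\bar{\mathcal{E}}(-2)$ and $\bar{\mathcal{E}'}(-2)$ are stable rank two bundles with $c_1=0$, $c_2=1$ and such bundles vary in a positive-dimensional family over $\mathbb{P}^3$, so pinning down their isomorphism class must use the arithmetic of $X$ itself --- the constraint that each becomes $\mathrm{Gal}$-equivariant for the twisted structure defining $X$, or equivalently a rigidity statement for these curves on the non-split $X$ --- and this is exactly where the hypotheses genuinely about $X$, rather than about $\mathbb{P}^3$, come in.
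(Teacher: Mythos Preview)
Your construction and Ulrich verification are essentially the paper's: Hartshorne--Serre (via \cite{BAV}) for $C\subset X$ with target determinant $\mathcal{O}_X(4)$, the computation $\mathrm{Ext}^1(\mathcal{I}_C(4),\mathcal{O}_X)\simeq k$, and then vanishing from the twisted extension and ideal-sheaf sequences together with the bijectivity hypothesis. One cosmetic difference: the paper checks the Ulrich condition directly on $X$, exploiting that $\mathcal{E}$ is \emph{special} ($\det\mathcal{E}=\omega_X\otimes\mathcal{O}_X(8)$), so that Serre duality cuts the work to $H^\bullet(\mathcal{E}(-2))=0$ and $H^i(\mathcal{E}(-4))=0$ for $i\le 1$; you instead base change first via Proposition~3.3 and track connecting maps. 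The underlying computations coincide.

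The divergence is in uniqueness. The paper does \emph{not} use any arithmetic of $X$: it simply invokes \cite{LV}, Proposition~5.4, for uniqueness of the rank-two Ulrich bundle on $(\mathbb{P}^3,\mathcal{O}_{\mathbb{P}^3}(2))$, and then Propositions~3.3 and~4.7 transport this to $X$. So the Galois-equivariance mechanism you sketch is not what is intended. Your caution is nonetheless well founded: $\bar{\mathcal{E}}(-2)$ is a null-correlation bundle, and these are parametrised by $\mathbb{P}(\wedge^2 H^0(\mathcal{O}_{\mathbb{P}^3}(1))^\vee)$ minus the Pl\"ucker quadric, a $5$-dimensional family of pairwise non-isomorphic stable bundles with $(c_1,c_2)=(0,1)$, each of which (after twisting by $\mathcal{O}(2)$) is Ulrich for $(\mathbb{P}^3,\mathcal{O}(2))$. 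The uniqueness assertion therefore rests entirely on the citation to \cite{LV}; since the hypotheses of the theorem do not exclude $X=\mathbb{P}^3$ itself, no argument special to a non-split $X$ can supply what is missing.
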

\begin{proof}
	Since $\bar{C}:=C\otimes_k \bar{k}$ is an elliptic curve, we have $\omega_{\bar{C}}=\mathcal{O}_{\bar{C}}$. Now Proposition 4.7 implies $\omega_C=\mathcal{O}_C$. Let $\mathcal{N}:=\mathcal{N}_{C/X}$ be the normal bundle. From adjunction formula we obtain  $\mathrm{det}(\mathcal{N})\simeq \omega^{-1}_{X|C}$. Hence, there is a line bundle $\mathcal{L}:=\omega^{-1}_X$ extending $\mathrm{det}(\mathcal{N})$. Setting $\bar{\mathcal{N}}=\mathcal{N}_{\bar{C}/\bar{X}}$, we have $\mathrm{det}(\bar{\mathcal{N}})\simeq \omega^{-1}_{\mathbb{P}^3|\bar{C}}$. Hence the line bundle $\bar{\mathcal{L}}=\mathcal{L}\otimes_k \bar{k}$ extends $\mathrm{det}(\bar{\mathcal{N}})$. Note that $H^2(X,\bar{\mathcal{L}}^{-1})=H^2(\mathbb{P}^3,\omega_{\mathbb{P}^3})=0$. Since $\bar{C}$ is a local complete intersection in $\mathbb{P}^3$ we can apply \cite{BAV}, Theorem 2.2 (generalized Hartshorne--Serre correspondence) to obtain a rank two vector bundle $\mathcal{E}'$ on $X\otimes_k\bar{k}=\mathbb{P}^3$ sitting in the following short exact sequence
	\begin{eqnarray}
		\begin{xy}
			\xymatrix{
				0\ar[r] &\mathcal{O}_{\bar{X}}\ar[r]^t &\ar[r]\mathcal{E}' & \mathcal{I}_{\bar{C}}\otimes \bar{\mathcal{L}}  \ar[r]& 0.\\
			}
		\end{xy}
	\end{eqnarray}
	Since $H^1(\bar{X},\omega_{\bar{X}})=0$, the pair $(t,\mathcal{E}')$ is unique up to isomorphism, i.e. 
	\begin{eqnarray*}
		\mathrm{Ext}^1(\mathcal{I}_{\bar{C}}\otimes \bar{\mathcal{L}}, \mathcal{O}_{\bar{X}})\simeq \bar{k}.
	\end{eqnarray*}
	Since the coherent sheaves $\mathcal{O}_X$ and $\mathcal{I}_C\otimes\mathcal{L}$ exist on $X$, we also have $\mathrm{Ext}^1(\mathcal{I}_C\otimes\mathcal{L}, \mathcal{O}_X)\simeq k$. Therefore, we have an exact sequence 
	\begin{eqnarray}
		\begin{xy}
			\xymatrix{
				0\ar[r] &\mathcal{O}_{X}\ar[r]^s &\ar[r]\mathcal{E} & \mathcal{I}_{C}\otimes \mathcal{L}  \ar[r]& 0\\
			}
		\end{xy}
	\end{eqnarray}
	that base changes to the exact sequence (8). Then Proposition 4.7 yields $\bar{\mathcal{E}}:=\mathcal{E}\otimes_k \bar{k}\simeq \mathcal{E}'$.
	Moreover, since $\mathrm{det}(\bar{\mathcal{E}})=\mathcal{L}\otimes_k \bar{k}=\mathcal{O}_{\mathbb{P}^3}(4)$ (see \cite{BAV}, Theorem 2.2), we conclude $\mathrm{det}(\mathcal{E})=\mathcal{L}=\mathcal{O}_{X}(4)$. Let us show that $\mathcal{E}$ is an Ulrich bundle. Note that $X$ is embedded into $\mathbb{P}^9$ via $\mathcal{O}_X(2)$, i.e. there is the morphism $\phi_2\colon X\rightarrow \mathbb{P}^9$ from Theorem 2.1. Recall that a rank two bundle $\mathcal{F}$ is \emph{special} in the sense of \cite{ESWV} if $\mathrm{det}(\mathcal{F})=\omega_X\otimes \mathcal{O}(4)$. Here $\mathcal{O}(4)=\phi_2^*\mathcal{O}_{\mathbb{P}^9}(4)$. Since $X\otimes_k \bar{k}\simeq \mathbb{P}^3$, we see that $\bar{\mathcal{E}}$ satisfies $\mathrm{det}(\bar{\mathcal{E}})\simeq \mathcal{O}_{\mathbb{P}^3}(4)$. Hence $\bar{\mathcal{E}}$ is special (see \cite{BV}, Remark right after Proposition 5.6.1). Now $\phi^*\mathcal{O}_{\mathbb{P}^9}(1)=\mathcal{O}_X(2)$, and therefore $\mathrm{det}(\mathcal{E})=\omega_X\otimes \mathcal{O}_X(2\cdot 4)$. Using $\mathcal{F}=\mathcal{F}^{\vee}\otimes\mathrm{det}(\mathcal{F})$ for a rank two vector bundle $\mathcal{F}$, we calculate
	\begin{eqnarray*}
		\mathcal{E}(-2)\simeq \omega_X\otimes \mathcal{E}(-6)^{\vee},\; \mathcal{E}(-2)\simeq \omega_X\otimes \mathcal{E}(-4)^{\vee}\;\;\text{and}\;\;
		\mathcal{E}(-6)\simeq \omega_X\otimes \mathcal{E}(-2)^{\vee}.
	\end{eqnarray*}
	To show that $\mathcal{E}$ is an Ulrich bundle, we have to verify that $H^{\bullet}(X,\mathcal{E}(-2r))=0$ for $1\leq r\leq 3$. By Serre duality, it suffices to prove $H^{\bullet}(X,\mathcal{E}(-2))=0$ and $H^i(X,\mathcal{E}(-4))=0$ for $i=0,1$. We first show $H^{\bullet}(X,\mathcal{E}(-2))=0$. Tensoring the exact sequence (9) with $\mathcal{O}_X(-2)$ gives
	\begin{eqnarray*}
		\begin{xy}
			\xymatrix{
				0\ar[r] &\mathcal{O}_X(-2)\ar[r]&\ar[r]\mathcal{E}(-2) & \mathcal{I}_C\otimes \mathcal{O}_X(2) \ar[r]& 0.\\
			}
		\end{xy}
	\end{eqnarray*}
	The long exact sequence for cohomology shows that it is enough to prove $H^i(X,\mathcal{O}_X(-2))=0$ and $H^i(X,\mathcal{I}_C(2))=0$. The vanishing of $H^i(X,\mathcal{O}_X(-2))$ follows from the vanishing of $H^i(X,\mathcal{O}_X(-2))\otimes_k \bar{k}\simeq H^i(\mathbb{P}^3,\mathcal{O}_{\mathbb{P}^3}(-2))$. The vanishing of $H^i(X,\mathcal{I}_C(2))=0$ can be proved using the exact sequence 
	\begin{eqnarray*}
		\begin{xy}
			\xymatrix{
				0\ar[r] &\mathcal{I}_C(2)\ar[r]&\ar[r]\mathcal{O}_X(2)& \mathcal{O}_C(2)\ar[r]& 0\\
			}
		\end{xy}
	\end{eqnarray*}
	and the fact that we assumed the restriction map $H^0(X,\mathcal{O}_X(2))\rightarrow H^0(C,\mathcal{O}_C(2))$ to be bijective. Note that the bijectivity of $H^0(X,\mathcal{O}_X(2))\rightarrow H^0(C,\mathcal{O}_C(2))$ in particular implies $h^0(\mathcal{I}_C(2))=h^1(\mathcal{I}_C(2))=0$. It remains to show $H^i(X,\mathcal{E}(-4))=0$ for $i=0,1$. Again, using  
	\begin{eqnarray*}
		\begin{xy}
			\xymatrix{
				0\ar[r] &\mathcal{O}_X(-4)\ar[r]&\ar[r]\mathcal{E}(-4) & \mathcal{I}_C\ar[r]& 0,\\
			}
		\end{xy}
	\end{eqnarray*}
	it suffices to show $H^i(X,\mathcal{O}_X(-4))=0$ and $H^i(X,\mathcal{I}_C)=0$ for $i=0,1$. The vanishing of $H^i(X,\mathcal{O}_X(-4))$ follows again from base change to $\bar{k}$ and for the vanishing of $H^i(X,\mathcal{I}_C)$ one uses the exact sequence
	\begin{eqnarray*}
		\begin{xy}
			\xymatrix{
				0\ar[r] &\mathcal{I}_C\ar[r]&\ar[r]\mathcal{O}_X& \mathcal{O}_C\ar[r]& 0\\
			}
		\end{xy}
	\end{eqnarray*}
	and the fact that $H^0(X,\mathcal{O}_X)\simeq k$ and $H^0(C,\mathcal{O}_C)\simeq k$. The uniqueness of $\mathcal{E}$ follows from Proposition 3.3 and \cite{LV}, Proposition 5.4 This completes the proof.
\end{proof}
\begin{cor}
	Let $X$ be as in Theorem 5.15. Then there is a Ulrich bundle of rank $12$ for $(X,\mathcal{O}_X(2d))$ for any $d\geq 1$.
\end{cor}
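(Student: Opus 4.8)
The plan is to run the construction from the proof of Proposition 5.2 once more, but feeding in the rank-two Ulrich bundle produced by Theorem 5.15 in place of the (higher-rank) one coming from Proposition 5.1. Since $X\otimes_k\bar k\simeq\mathbb{P}^3$, all the ingredients of that argument are available, and the rank of the resulting bundle drops from $s\cdot(3!)^2$ to $2\cdot 3!=12$.

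Concretely, I would proceed as follows. First, Theorem 5.15 provides a rank-two Ulrich bundle $\mathcal{E}$ for $(X,\mathcal{O}_X(2))$. Second, exactly as in the proof of Proposition 5.2, I would fix a finite surjective projection $\pi\colon X\to\mathbb{P}^3$ with $\pi^*\mathcal{O}_{\mathbb{P}^3}(1)\cong\mathcal{O}_X(2)$; such a $\pi$ is obtained by composing the embedding $\phi_2\colon X\hookrightarrow\mathbb{P}^N$ of Theorem 2.1 with a sufficiently general linear projection onto $\mathbb{P}^3$ (here $\dim(X)=3$), and it is the very projection already invoked in Proposition 5.2. Third, by \cite{BV}, Proposition 3.1, for every $d\geq 1$ there is an Ulrich bundle $\mathcal{F}$ of rank $3!=6$ for $(\mathbb{P}^3,\mathcal{O}_{\mathbb{P}^3}(d))$. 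Since $\mathcal{E}$ is an Ulrich bundle for $(X,\pi^*\mathcal{O}_{\mathbb{P}^3}(1))$ and $\mathcal{F}$ is an Ulrich bundle for $(\mathbb{P}^3,\mathcal{O}_{\mathbb{P}^3}(d))$, \cite{ESWV}, Proposition 5.4 then shows that $\mathcal{E}\otimes\pi^*\mathcal{F}$ is an Ulrich bundle for $(X,\pi^*\mathcal{O}_{\mathbb{P}^3}(d))=(X,\mathcal{O}_X(2d))$, of rank $\mathrm{rk}(\mathcal{E})\cdot\mathrm{rk}(\mathcal{F})=2\cdot 6=12$, which is the assertion.

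There is no real obstacle here: the entire content sits in Theorem 5.15, and the rest is a literal repetition of Proposition 5.2 with a better input bundle. The only point deserving a word is the existence of a finite linear projection $\pi\colon X\to\mathbb{P}^3$ with $\pi^*\mathcal{O}_{\mathbb{P}^3}(1)\cong\mathcal{O}_X(2)$; this is unproblematic for the base fields relevant to the examples (such as $\mathbb{R}$, cf.\ Example 5.17), since a general linear centre of projection of the appropriate dimension is disjoint from $\phi_2(X)$, and it is exactly the map already used in Proposition 5.2. If one wished to bypass even this, one could instead combine Lemma 3.2 with the base-change criterion Proposition 3.3, but the route through \cite{ESWV}, Proposition 5.4 is the most economical.
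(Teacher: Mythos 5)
Your proposal is correct and is essentially identical to the paper's own argument: take the rank-two Ulrich bundle from Theorem 5.15, a finite surjective projection $\pi\colon X\to\mathbb{P}^3$ with $\pi^*\mathcal{O}_{\mathbb{P}^3}(1)\simeq\mathcal{O}_X(2)$, the rank-$3!$ Ulrich bundle for $(\mathbb{P}^3,\mathcal{O}_{\mathbb{P}^3}(d))$ from \cite{BV}, Proposition 3.1, and conclude via \cite{ESWV}, Proposition 5.4 that $\mathcal{E}\otimes\pi^*\mathcal{F}$ is Ulrich of rank $2\cdot 6=12$ for $(X,\mathcal{O}_X(2d))$. Your extra remarks on constructing $\pi$ only make explicit what the paper (as in Proposition 5.2) takes for granted.
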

\begin{proof}
	Let $\mathcal{E}$ be the Ulrich bundle for $(X,\mathcal{O}_X(2))$  from Theorem 5.15. Note that there exists a finite surjective projection $\pi\colon X\rightarrow \mathbb{P}^3$. According to \cite{BV}, Proposition 3.1 there is an Ulrich bundle $\mathcal{F}$ of rank $n!$ for $(\mathbb{P}^n,\mathcal{O}_{\mathbb{P}^n}(d))$. Now \cite{ESWV}, Proposition 5.4 states that $\mathcal{E}\otimes \pi^*\mathcal{F}$ is an Ulrich bundle for $(X,\mathcal{O}_X(pd))$. Its rank is $12$.
\end{proof}
\begin{exam}[genus one curve on Brauer--Severi threefold]
	\textnormal{We want to show that there exists a smooth geometrically connected genus 1 curve $C$ on a non-split Brauer--Severi threefold of index two which satisfies the hypotheses of Theorem 5.15. So let us take a smooth geometrically connected genus 1 curve $C$ over $\mathbb{R}$. Then there is an exact sequence
		\begin{eqnarray*}
			0\longrightarrow \mathrm{Pic}(C)\longrightarrow \mathrm{Pic}_C(\mathbb{R})\stackrel{\delta}{\longrightarrow} \mathrm{Br}(\mathbb{R}).
		\end{eqnarray*}
		Here $\mathrm{Pic}_C$ is the Picard scheme of $C$ with respect to the \'etale topology. With a similar argument as in \cite{LIV}, Example 2.2 one can show that the map
		$\mathrm{Pic}(C)\rightarrow \mathrm{Pic}_C(\mathbb{R})$ is not surjective. Since $\mathrm{Br}(\mathbb{R})\simeq \mathbb{Z}/2\mathbb{Z}$ is generated by the class $\mathbb{H}$ of the Hamilton quaternions, we find that there is a $\mathcal{L}\in \mathrm{Pic}_C(\mathbb{R})$ such that $\delta(\mathcal{L})=\mathbb{H}$. One can choose $\mathcal{L}$ to be of degree $6$. In this case, $\mathcal{L}$ defines an embedding $\phi\colon C\rightarrow P$ into a Brauer--Severi variety $P$ of dimension $5$ (see \cite{LIV}). The class of $P$ corresponds to $\mathrm{Mat}_3(\mathbb{H})$. Since the index of $P$ is two, there is a $0$-cycle $Z$ of degree 2 on $P$. Now one can choose $Z$ in such a way that the linear span $P'$ of $Z$ has no intersection with $\phi(C)$. If we project away from $P'$, we obtain a rational map $P\dashrightarrow Q$. The composition $C\rightarrow P\dashrightarrow Q$ induces an embedding of $C$ into $Q$. Now $Q$ is a Brauer--Severi threefold with class $\mathbb{H}$. The degree of $C$ in $Q$ is still $6$ and we can use a classical result of Castelnuovo to conclude that after base change to $\mathbb{C}$, the elliptic curve $C\otimes_{\mathbb{R}}{\mathbb{C}}$ is not contained in quadric surface. This implies that the restriction map $H^0(Q,\mathcal{O}_Q(2))\rightarrow H^0(C,\mathcal{O}_C(2))$ is bijective.}
\end{exam}
\begin{rema}
	\textnormal{Let $X$ be as above. Then Proposition 5.2 gives us an Ulrich bundle $\mathcal{E}$ for $(X,\mathcal{O}_X(2d))$ which has rank multiple of $36$. Under the assumptions of Theorem 5.15, we see that Corollary 5.16 implies that the minimal rank of an Ulrich bundle is at most $12$.}
\end{rema}
\begin{rema}
	\textnormal{We want to note that the problem of finding smooth genus 1 curves on Brauer--Severi varieties with prescribed properties is quite challenging. Up to now, it it an open problem to find smooth genus 1 curves on arbitrary Brauer--Severi varieties. We refer the interested reader to \cite{DJV} and references therein.}
\end{rema}
\addcontentsline{toc}{section}{References}	

{\small MATHEMATISCHES INSTITUT, HEINRICH--HEINE--UNIVERSIT\"AT 40225 D\"USSELDORF, GERMANY}\\
E-mail adress: novakovic@math.uni-duesseldorf.de

\end{document}